\newtheorem{theorem}{Theorem}[section]
\newtheorem{corollary}[theorem]{Corollary}
\newtheorem{lemma}[theorem]{Lemma}
\newtheorem{prop}[theorem]{Proposition}
\theoremstyle{definition}
\newtheorem{definition}[theorem]{Definition}
\newtheorem{example}[theorem]{Example}
\newtheorem{remark}[theorem]{Remark}
\newtheorem{conjecture}[theorem]{Conjecture}
\newtheorem{question}[theorem]{Question}
\newtheorem*{ack}{Acknowledgments}
\newcommand{\Z}{\mathbb{Z}}
\newcommand{\Q}{\mathbb{Q}}
\newcommand{\C}{\mathbb{C}}
\newcommand{\FF}{\mathbb{F}}
\renewcommand{\L}{\mathbb{L}}
\renewcommand{\k}{\Bbbk}
\DeclareMathAlphabet{\pazocal}{OMS}{zplm}{m}{n}
\newcommand{\A}{{\pazocal{A}}}
\newcommand{\HH}{{\pazocal H}}
\newcommand{\RR}{{\mathcal R}}
\newcommand{\VV}{{\mathcal V}}
\newcommand{\F}{{\mathcal{F}}}
\newcommand{\cP}{{\mathcal{P}}}
\newcommand{\cE}{{\mathcal{E}}}
\newcommand{\M}{{\mathcal{M}}}
\newcommand{\cC}{{\mathcal{C}}}
\newcommand{\wC}{\,\widehat{\mathcal{\!C}}}
\newcommand{\g}{{\mathfrak{g}}}
\newcommand{\h}{{\mathfrak{h}}}
\newcommand{\m}{{\mathfrak{m}}}
\newcommand{\rr}{{\mathfrak{r}}}
\DeclareMathOperator{\gr}{gr}
\DeclareMathOperator{\im}{im}
\DeclareMathOperator{\ab}{{ab}}
\DeclareMathOperator{\Hom}{{Hom}}
\DeclareMathOperator{\ad}{ad}
\DeclareMathOperator{\Lie}{Lie}
\DeclareMathOperator{\lcs}{LCS}
\newcommand{\oX}{\overline{X}}
\newcommand{\surj}{\twoheadrightarrow}
\newcommand{\inj}{\hookrightarrow}
\newcommand{\isom}{\xrightarrow{\,\cong\,}}
\newcommand{\abs}[1]{\left| #1 \right|}
\def\dot{\mathchar"013A}  
\newcommand{\hdot}{{\raise1pt\hbox to0.35em{\Huge $\dot$}}} 
\newcommand{\bwedge}{\mbox{\normalsize $\bigwedge$}}
\newcommand{\what}{\widehat{\:\:}}
\newenvironment{romenum}
{ 

\begin{enumerate}}{\end{enumerate}}
\newcommand{\cga}{\ensuremath{\small{\textsf{cga}}}}
\newcommand{\cdga}{\ensuremath{\small{\textsf{cdga}}}}
\begin{document}

\title[Infinitesimal finiteness obstructions]{%
Infinitesimal finiteness obstructions}

\author[Stefan Papadima]{Stefan Papadima$^1$$\dagger$}
\address{Simion Stoilow Institute of Mathematics, 
P.O. Box 1-764,
RO-014700 Bucharest, Romania}
\email{\href{mailto:Stefan.Papadima@imar.ro}{Stefan.Papadima@imar.ro}}
\thanks{$^1$Work partially supported by the Romanian Ministry of Research and
Innovation, CNCS-UEFISCDI, grant
PN-III-P4-ID-PCE-2016-0030, within PNCDI III}
\thanks{$\dagger$Deceased January 10, 2018}

\author[Alexander~I.~Suciu]{Alexander~I.~Suciu\,$^2$}
\address{Department of Mathematics,
Northeastern University,
Boston, MA 02115, USA}
\email{\href{mailto:a.suciu@northeastern.edu}{a.suciu@northeastern.edu}}
\urladdr{\href{http://web.northeastern.edu/suciu/}%
{web.northeastern.edu/suciu/}}
\thanks{$^2$Work partially supported by the Simons Foundation Collaboration 
Grant for Mathematicians \#354156}

\subjclass[2010]{Primary 
55P62;  
Secondary
17B01, 
20F14, 
20J05,  
55N25. 
}

\keywords{Differential graded algebra, minimal model,  metabelian group, 
cohomology jump loci, filtered formal group, Hall bases, holonomy Lie algebra, 
Malcev Lie algebra.}

\begin{abstract}
Does a space enjoying good finiteness properties admit an 
algebraic model with commensurable finiteness properties?  
In this note, we provide a rational homotopy obstruction for 
this to happen.  As an application, we show that the maximal 
metabelian quotient of a very large, finitely generated group is not 
finitely presented. Using the theory of $1$-minimal models, we 
also show that a finitely generated group 
$\pi$ admits a connected $1$-model with finite-dimensional 
degree $1$ piece if and only if the Malcev Lie algebra $\m(\pi)$ 
is the lower central series completion of a finitely presented Lie algebra. 
\end{abstract}

\maketitle
\setcounter{tocdepth}{1}
\tableofcontents

\section{Introduction and statement of results}
\label{sect:intro}

\subsection{Finite $\cdga$ models}
\label{subsec:intro1}

A recurring theme in topology is to determine the geometric and homological 
finiteness properties of spaces and groups. A prototypical such question is 
to determine whether a path-connected space $X$ is homotopy equivalent 
to a CW-complex with finite $q$-skeleton, for some $1\le q \le \infty$, 
in which case we say $X$ is {\em $q$-finite}.  Another question  
is to decide whether a finitely generated group $\pi$ admits a finite 
presentation, or, more generally, a classifying space $K(\pi,1)$ with 
finite $q$-skeleton. 

A fruitful approach to this type of question is to compare 
the finiteness properties of the spaces or groups under 
consideration to the corresponding finiteness properties 
of algebraic models for such spaces and groups. To formulate 
our motivating question, we need some terminology.

Let $A$ be commutative differential graded algebra 
(for short, a $\cdga$) over a field $\k$ of characteristic $0$.  
By analogy with the aforementioned 
topological notion, we say that $A$ is 
{\em $q$-finite}\/ if it is connected (i.e., $A^0= \k \cdot 1$) 
and $\sum_{i\le q} \dim A^i < \infty$. 
Furthermore, we say that two $\cdga$s $A$ and $B$ 
have the same (homotopy) $q$-type 
(written $A\simeq_q B$) if there is a zig-zag of $\cdga$ 
maps  connecting $A$ and $B$, with each such map 
inducing isomorphisms in homology up to
degree $q$ and a monomorphism in degree $q+1$. 

We say that a $\cdga$ $A$ is a 
{\em $q$-model}\/ for a space $X$ if it has the same $q$-type as 
Sullivan's $\cdga$ of piecewise polynomial, complex-valued 
forms on $X$.  The basic question that we shall address in this 
paper  is the following.

\begin{question}
\label{mainpbm}
When does a $q$-finite space $X$ admit a $q$-finite $q$-model $A$?
\end{question}

An important motivation for this question comes from work 
of Dimca--Papadima \cite{DP-ccm} and Budur--Wang \cite{BW}, 
who discovered some deep connections between the finiteness properties 
of algebraic models for spaces and the structure of the corresponding 
cohomology jump loci.  

Observe that $X$ is $1$-finite if and only if the group $\pi=\pi_1(X)$ 
is finitely generated; moreover, if $X$ is $2$-finite, then $\pi$ is finitely 
presented.  Further motivation for considering Question \ref{mainpbm} 
comes from an effort to understand whether the maximal metabelian 
quotient $\pi/\pi''$ of a finitely presented group $\pi$ is also finitely presentable.

\subsection{Finiteness obstructions}
\label{subsec:intro2}

Our first result (which will be proved in \S\ref{subsec:infobs}), 
provides an infinitesimal obstruction to a positive answer to 
the above question. 

\begin{theorem}
\label{thm:betti}
Let $X$ be a space which admits a $q$-finite $q$-model. If $\M_q (X)$ is the 
Sullivan $q$-minimal model $\cdga$ of $X$, then $\dim H^i(\M_q(X))< \infty$, 
for all $i\le q+1$.
\end{theorem}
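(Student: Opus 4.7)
My plan is to compare $\M_q(X)$ with the hypothesized $q$-finite $q$-model $A$ via a single \cdga map, and then exploit the structural fact that a $q$-minimal model has all its generators concentrated in degrees $\le q$, so that its degree-$(q+1)$ component is entirely decomposable.

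First I would invoke the standard universal property of the $q$-minimal model: since $A$ is a $q$-model of $X$ and the $q$-minimal model depends only on the $q$-type, $\M_q(X)$ is also the $q$-minimal model of $A$, and as such it comes equipped with a \cdga map $\phi\colon \M_q(X)\to A$ inducing an isomorphism on $H^i$ for $i\le q$ and a monomorphism on $H^{q+1}$. The case $i\le q$ is then immediate: $H^i(\M_q(X))\cong H^i(A)$, and $\dim H^i(A)\le \dim A^i<\infty$ by $q$-finiteness of $A$.

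The heart of the argument is the case $i=q+1$. Writing $\M_q(X)=(\Lambda V,d)$, the definition of the $q$-minimal model forces $V=V^1\oplus\cdots\oplus V^q$, with no generators in degree $\ge q+1$. Hence every element of $(\Lambda V)^{q+1}$ is a sum of wedge products $v_{i_1}\wedge\cdots\wedge v_{i_k}$ with $k\ge 2$, each $1\le i_j\le q$, and $\sum_j i_j=q+1$; in particular, the whole degree-$(q+1)$ piece of $\M_q(X)$ is decomposable. Under $\phi$, each factor $\phi(v_{i_j})$ lies in $A^{i_j}\subseteq A^{\le q}$, a finite-dimensional subspace of $A$. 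Consequently, the image of $\phi^{q+1}$ is contained in the ``decomposable'' subspace
\[
A^{q+1}_{\mathrm{dec}}\ :=\ \sum_{\substack{k\ge 2,\; i_j\ge 1\\ i_1+\cdots+i_k=q+1}} A^{i_1}\cdot A^{i_2}\cdots A^{i_k}\ \subseteq\ A^{q+1},
\]
which is a finite sum of products of finite-dimensional spaces, and so is itself finite-dimensional. Since $\phi^{*}$ is injective on $H^{q+1}$, the cohomology $H^{q+1}(\M_q(X))$ embeds into the image of $A^{q+1}_{\mathrm{dec}}\cap\ker(d_A)$ inside $H^{q+1}(A)$, which is finite-dimensional; this gives the desired bound.

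The step I expect to require the most care is securing the \emph{direct} \cdga map $\phi$, rather than a mere zig-zag of $q$-quasi-isomorphisms: only a direct map transports the decomposability of $(\Lambda V)^{q+1}$ into the bounded subspace $A^{q+1}_{\mathrm{dec}}$ of $A^{q+1}$. This should follow from the standard fact that Sullivan's $q$-minimal model construction produces a canonical $q$-quasi-isomorphism $\M_q(A)\to A$, combined with the invariance $\M_q(A)=\M_q(X)$ within the $q$-type. The remaining ingredient, that $V$ is concentrated in degrees $\le q$, is built into the definition of the $q$-minimal model.
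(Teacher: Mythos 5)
Your proof is correct, and it differs from the paper's in how it handles the one genuinely delicate point: $q$-finiteness of $A$ controls $A^{\le q}$ but says nothing about $A^{q+1}$, which may be infinite-dimensional, so the naive bound $\dim H^{q+1}(\M_q(X))\le \dim H^{q+1}(A)$ is useless. The paper (Theorem \ref{prop:infobsq}) deals with this by first replacing $A$ with the finite sub-$\cdga$ $A[q]$ of Lemma \ref{lem:morefin}, whose degree-$(q+1)$ piece is $dA^q+\sum_{i+j=q+1}A^i\cdot A^j$; once the model is finite in every degree, the $q$-equivalence $\M_q(X)=\M_q(A[q])\to A[q]$ immediately gives $b_i(\M_q(X))<\infty$ for $i\le q+1$. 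You instead keep $A$ as given and use that the generators of $\M_q(X)$ sit in degrees $\le q$, so its entire degree-$(q+1)$ component is decomposable and the minimal model map sends it into the finite-dimensional subspace $A^{q+1}_{\mathrm{dec}}$; injectivity of $H^{q+1}(\phi)$ then finishes the argument. The two mechanisms are cognate --- your $A^{q+1}_{\mathrm{dec}}$ reduces (by associativity) to the two-factor sum $\sum_{i+j=q+1}A^i\cdot A^j$, i.e.\ exactly $A[q]^{q+1}$ minus the $dA^q$ summand --- but your route bypasses the need to verify that $A[q]$ is a sub-$\cdga$ (which is why the paper must adjoin $dA^q$ and first kill $A^{>q+1}$), at the cost of not producing the reusable finite replacement $A[q]$, which the paper exploits again later (e.g.\ in \S\ref{subsec:holo}). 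Your concern about securing a direct map $\phi\colon\M_q(X)\to A$ rather than a mere zig-zag is well placed and correctly resolved: the $q$-minimal model of $A$ comes with a canonical $q$-equivalence to $A$, and $\M_q(A)\cong\M_q(X)$ because the two objects share the same $q$-type.
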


In \cite{BW17}, Budur and Wang  found a completely different 
finiteness obstruction, involving the structure of the cohomology 
jump loci for  rank $1$ local systems on $X$. Namely, if $X$ 
is as above and $q$-finite, then all the irreducible components passing through 
the origin of those jump loci (in degree at most $q$) are algebraic 
subtori of the character group of $\pi_1(X)$.   As shown in 
Example \ref{ex:subtler}, our infinitesimal obstruction may be 
subtler than this jump loci test.

As an application of Theorem \ref{thm:betti}, we produce a large class of 
finitely generated groups whose maximal metabelian quotients 
have no good finiteness properties, either at the level of 
presentation complexes, or at the level of $1$-models. 
First, some quick definitions. A group $G$ is said to be 
{\em very large}\/ if it has a free, non-cyclic quotient; the 
group $G$ is merely {\em large}\/ if it has a finite-index 
subgroup which is very large. 
Our result (which will be proved in \S\ref{subsec:pfmeta}), 
may be stated as follows.

\begin{theorem}
\label{thm:meta}
Let $G$ be a metabelian group of the form $G=\pi/\pi''$, where  
$\pi$ is a finitely generated, very large group, and $\pi''$ is its 
second derived subgroup.  Then: 
\begin{enumerate}
\item \label{met1} 
$G$ is not finitely presentable.
\item \label{met2} 
$G$ does not admit a $1$-finite $1$-model.
\end{enumerate}
\end{theorem}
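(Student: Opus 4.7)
Both parts of the theorem reduce to the single infinitesimal assertion
$\dim H^2(\M_1(G)) = \infty$,
where $\M_1(G) := \M_1(K(G,1))$. For (2) this is exactly the obstruction supplied by Theorem \ref{thm:betti} at $q = 1$. For (1), a finite presentation $2$-complex $P$ for $G$ has $\dim H^2(P,\k) < \infty$; the five-term sequence for the universal-cover fibration $\widetilde P \to P \to K(G,1)$ (together with $H^1(\widetilde P,\k) = 0$) yields $H^2(G,\k) \inj H^2(P,\k)$, and since the canonical comparison $\M_1(G) \to A_{PL}(K(G,1))$ is a $1$-equivalence and hence injective on $H^2$, one obtains $H^2(\M_1(G)) \inj H^2(G,\k)$, contradicting the infinitesimal claim.

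To produce the needed classes I would exhibit $F_n/F_n''$ as a group-theoretic retract of $G$. The very-large hypothesis yields an epimorphism $\phi: \pi \surj F_n$ with $n \geq 2$, and freeness of $F_n$ provides a section $\sigma: F_n \inj \pi$ with $\phi\sigma = \id$. Because $\sigma([F_n',F_n']) \subseteq [\pi',\pi'] = \pi''$, the section descends to $\bar\sigma: G_0 := F_n/F_n'' \inj G := \pi/\pi''$ splitting the induced $\bar\phi: G \surj G_0$. Functoriality of $\M_1$ promotes this to a CDGA retract, so $H^2(\M_1(G_0))$ is a direct summand of $H^2(\M_1(G))$. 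The problem reduces to showing $\dim H^2(\M_1(F_n/F_n'')) = \infty$ for $n \geq 2$.

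The $1$-minimal model of $F_n/F_n''$ computes the continuous Chevalley--Eilenberg cohomology of the Malcev Lie algebra $\m := \m(F_n/F_n'')$, whose associated graded is the classical free metabelian Lie algebra $\bar L := L_n/[L_n',L_n']$. Thus $H^2(\M_1(F_n/F_n''))$ is dual to the space of minimal continuous relations in a presentation of $\m$, and passing to the associated graded bounds this below by the dimension of the minimal relations of $\bar L$. The whole problem therefore reduces to the assertion that the free metabelian Lie algebra on $n \geq 2$ generators is not finitely presented. I would verify this via the Hochschild--Serre spectral sequence for the abelian-by-abelian extension $\bar L' \inj \bar L \surj \bar L^{\ab} = \k^n$, using that $\bar L' = L_n'/[L_n',L_n']$ is an explicit infinite-dimensional $\k[\k^n]$-module --- essentially the Alexander invariant of the free group --- and observing that Koszul-type computations in $H^*(\k^n, \Lambda^*(\bar L')^{\vee})$ produce infinitely many independent classes in total degree $2$.

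The main obstacle is this final step. A crude Hilbert-series comparison of $\bar L$ (via Chen's formula $\dim \bar L_k = (k-1)\binom{n+k-2}{k}$ for $k \geq 2$) with the free Lie algebra $L_n$ (Witt's formula) already shows that the ideal $[L_n', L_n']$ is large in every sufficiently high degree; upgrading this to infinitely many \emph{minimal} relations --- that is, controlling the quotient by $[L_n,[L_n',L_n']]$ --- is the computational heart of the argument, and is precisely what the Hochschild--Serre/Alexander-invariant machinery delivers. Transferring the conclusion from the graded Chen Lie algebra back to the Malcev completion is routine, since the continuous Chevalley--Eilenberg cohomology of a pronilpotent Lie algebra is filtered with associated graded equal to the graded Lie-algebra cohomology.
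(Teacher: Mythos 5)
Your overall architecture coincides with the paper's: retract $G$ onto $\FF_n/\FF_n''$ using a splitting of $\pi \surj \FF_n$, reduce both parts of the statement to the finiteness of $b_2(\M_1(\FF_n/\FF_n''))$ via the infinitesimal obstruction, and then reduce that to the second homology of the free metabelian Lie algebra, $H_2(\L_n/\L_n'') = \L_n''/[\L_n,\L_n'']$. However, there is a genuine gap at exactly the point you yourself flag as ``the main obstacle'': you never actually prove that $\L_n''/[\L_n,\L_n'']$ is infinite-dimensional. The Hochschild--Serre/Alexander-invariant computation you gesture at is plausible but is not carried out, and it is the entire content of the hardest step; a Hilbert-series comparison of $\L_n''$ with $\L_n$ says nothing by itself about the quotient by $[\L_n,\L_n'']$. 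The paper closes this gap in Proposition \ref{prop:freemeta} by a concrete argument with Hall--Reutenauer bases: after reducing to $n=2$ by a splitting argument, it exhibits an explicit basis of the bigraded piece $\L''_{2,i}$, computes $\dim\L''_{2,i}$ to be $k$ for $i=2k+1$ and $k-1$ for $i=2k$, and observes that vanishing of $\L''/[\A,\L'']$ in degree $i+2$ would force $\ad_y$ to surject $\L''_{2,i-1}$ onto $\L''_{2,i}$, which this dimension count contradicts in every odd degree $\ge 5$. Until you supply an argument of this kind, the proof is incomplete.

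A secondary issue concerns your transfer from $H^2(\M_1(\FF_n/\FF_n''))$ to $H_2(\L_n/\L_n'')$, which you justify by asserting that the continuous Chevalley--Eilenberg cohomology of a pronilpotent Lie algebra ``is filtered with associated graded equal to the graded Lie-algebra cohomology.'' In general this is only the $E_1$-page of a spectral sequence, which gives $\dim H^2(\m) \le \dim H^2(\gr\m)$ --- the \emph{wrong} direction for your purposes, since an infinite-dimensional $H^2(\gr\m)$ could a priori be killed by differentials. What rescues the argument is the filtered formality of $\FF_n/\FF_n''$ (a nontrivial input, from \cite{PS04} and \cite{SW}): $\m(\FF_n/\FF_n'') \cong \widehat{L}$ with $L = \L_n/\L_n''$ graded and generated in degree $1$. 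The paper's Proposition \ref{prop:filtf} then shows, via Hopf's formula and the fact that all maps respect the grading, that $H_2(L)$ injects into $\varprojlim_i H_2(\m/F_i)$, whose dimension is $b_2(\M_1)$. You should either invoke filtered formality explicitly and run this injectivity argument, or otherwise produce the inequality in the direction you actually need.
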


Consequently, if $\pi$ is a group as in Theorem \ref{thm:meta}, 
then the derived subgroup $G'$ is not finitely generated. We 
observe in Example \ref{ex:largefp} that the condition that the 
group $\pi$ be very large cannot be relaxed to it only being large.

\subsection{Malcev and holonomy Lie algebras}
\label{subsec:intro3}

In the last part of this paper, we turn to studying the finiteness properties 
of Lie algebras. 
We start in \S\ref{sec:holmal} by analyzing the holonomy Lie algebra, 
$\h(A)$, associated to a $1$-finite $\cdga$ $A$. 
Using the Chevalley--Eilenberg cochain functor $\cC$, we
define a {\em functorial} $1$-minimal $\cdga$, $\wC (\h(A))$, and a 
natural $\cdga$ transformation, $f_{A} \colon \wC (\h(A)) \rightarrow A$.
Our main technical result (which we prove in \S\ref{subsec:pfclass}), 
reads as follows.

\begin{theorem}
\label{thm:nat1model-intro}
The classifying map $f_{A}$ is a functorial $1$-minimal model map 
for $A$.
\end{theorem}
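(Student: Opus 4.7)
The plan is to establish three things: that $\wC(\h(A))$ is a $1$-minimal $\cdga$ in Sullivan's sense; that the classifying map $f_{A}$ induces an isomorphism on $H^1$ and a monomorphism on $H^2$; and that the assignment is natural in $A$. Since a $1$-minimal model map is characterized exactly by the first two properties, these together yield the claim.

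For the minimality, I would unpack $\wC$ using the lower central series filtration $\h(A)=\h_1 \supset \h_2 \supset \cdots$ of the holonomy Lie algebra. Setting $V_k := (\gr^k \h(A))^*$ and $V := \bigoplus_k V_k$, one has $\wC(\h(A)) = \bigwedge V$ with every generator placed in cohomological degree $1$. Dualizing the graded bracket $\gr^i\otimes\gr^{k-i}\to\gr^k$ identifies the quadratic component of the Chevalley--Eilenberg differential on $V_k$ with a map into $\bigwedge^2 V_{<k}$, while higher-weight components dualize iterated brackets. In particular $d(V)\subset \bigwedge^{\ge 2}V$ and the tower $\bigwedge V_{\le k}$ exhausts $\wC(\h(A))$, matching Sullivan's recursive definition of a $1$-minimal $\cdga$.

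For the $1$-quasi-isomorphism, in degree one one has $Z^1(\wC(\h(A))) = (\h^{\ab})^*$, and by the construction of the holonomy Lie algebra the relations in $\h(A)^{\ab}$ come from the transpose of $d_A\colon A^1\to A^2$, so that $(\h^{\ab})^* \cong \ker(d_A) = H^1(A)$; the map $f_A$ realizes this identification tautologically. For injectivity on $H^2$, I would argue that a closed element in $\bigwedge^2 V_1 \oplus V_2$ whose image in $H^2(A)$ is exact corresponds under duality to a relation on $\gr^2 \h(A)$ already built in from the cup product $A^1\wedge A^1\to A^2$ combined with $d_A$, and so it is already a coboundary in $\wC(\h(A))$.

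Finally, naturality is formal: both $\h(-)$ and $\wC(-)$ are functorial in the appropriate variance, their composite is covariant on $\cdga$s, and $f_A$ is prescribed on generators by a canonical duality pairing, so the naturality square commutes. The step I expect to be the main obstacle is the $H^2$-injectivity, where one must carefully coordinate the quadratic part of the Chevalley--Eilenberg differential with both the cup product $A^1\wedge A^1\to A^2$ and the linear differential $d_A|_{A^1}$, accounting for the non-formal contributions that arise when $d_A$ acts nontrivially on $A^1$.
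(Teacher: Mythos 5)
Your outline of what has to be checked ($1$-minimality of $\wC(\h(A))$, isomorphism on $H^1$, injectivity on $H^2$, naturality) is the right one, and your $H^1$ argument is essentially the paper's: $H^1(\wC(\h))\cong (\h^{\ab})^*$, while $\h(A)^{\ab}=\coker (d^*)$ is dual to $\ker (d\colon A^1\to A^2)=H^1(A)$. The naturality is indeed formal. The problem is the $H^2$ step, which you correctly identify as the main obstacle but then dispose of in one sentence. Since $H^2(\wC(\h(A)))=\varinjlim_n H^2(\cC(\h/\Gamma_n))$, a degree-$2$ cocycle may involve generators dual to arbitrarily deep lower central series quotients; injectivity of $H^2(f_A)$ is the statement that, for every $n$, a class in $H^2(\cC(\h/\Gamma_n))$ killed by $f_n$ already dies in $H^2(\cC(\h/\Gamma_{n+1}))$. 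Restricting attention to cocycles of weight at most $2$, as you do, says nothing about the limit; note that in the examples driving Theorem \ref{thm:meta} this $H^2$ is infinite-dimensional, with classes appearing in all sufficiently large weights.

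Moreover, even after dualizing, the assertion that such a class ``is already a coboundary'' is a genuine theorem rather than a bookkeeping identity. Setting $K=\ker (d^*\colon A_2\to A_1)$, what one must prove is that the map $K\to H_2(\h(A))$ induced by $\bwedge^2(p)\circ\mu^*$ is surjective: the quadratic parts of those defining relations of $\h(A)$ whose linear parts vanish must exhaust $H_2(\h(A))$. The paper establishes this in Lemma \ref{lem:partialprime} by lifting a cycle to $\bwedge^2\L$, expanding its boundary in the additive generators $\ad_J(\partial y_\lambda)$ of the relation ideal, extracting the length-zero coefficients to produce an element of $K$, correcting by an explicit element of $\L^1\wedge\rr$, and invoking the vanishing of $H_3$ of a free Lie algebra. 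One also needs the stabilization of the images $H_2(\h/\Gamma_m)\to H_2(\h/\Gamma_n)$ as $m\to\infty$ (Lemma \ref{lem:cstab}), proved by a separate lifting argument using $\Gamma_{n+1}=[\h,\Gamma_n]$. Neither ingredient appears in your sketch, so the $H^2$-injectivity remains unproved. A smaller point: the defining filtration of $\wC(\h)$ is $V^n=(\h/\Gamma_n)^*$, and the Chevalley--Eilenberg differential is dual to the bracket of $\h/\Gamma_n$ itself, not of $\gr\h$; the splitting $V\cong\bigoplus_k(\gr^k\h)^*$ is not canonical when $\h\not\cong\gr(\h)$ (see Example \ref{ex:noncarnot}), though minimality still follows because $\cC$ converts central extensions into Hirsch extensions.
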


As an application, we prove  in Corollary \S\ref{cor:malholo} the following:
If a finitely generated group $\pi$ admits a $1$-finite $1$-model $A$, then 
the Malcev Lie algebra $\m (\pi)$ is isomorphic to the lower central series (LCS) 
completion of the holonomy Lie algebra $\h (A)$.  This recovers a result from \cite{BMPP}, 
which in turn generalizes a result  from \cite{Bez}.  

Finally, we use Theorem \ref{thm:nat1model-intro} to prove 
in \S\ref{sec:complobstr} the following theorem, which 
provides a complete answer to Question \ref{mainpbm} 
in the case when $q=1$. 

\begin{theorem}
\label{thm:complobstr-intro}
A space with finitely generated fundamental group $\pi$ admits a 
$1$-finite $1$-model if and only if the Malcev Lie algebra $\m(\pi)$
is the LCS completion of a finitely presented Lie algebra.
\end{theorem}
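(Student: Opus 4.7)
The plan is to prove both directions of the biconditional separately. The forward direction follows quickly from Corollary \ref{cor:malholo}, while the reverse direction constitutes the main technical step and hinges on producing an explicit $1$-finite $\cdga$ from the finitely presented Lie algebra data.

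For the forward direction, suppose $\pi$ admits a $1$-finite $1$-model $A$. By Corollary \ref{cor:malholo}, $\m(\pi) \cong \widehat{\h(A)}$. Since $A$ is $1$-finite, the space $A^1$ is finite-dimensional, and hence so are the generating space $(A^1)^*$ of $\h(A)$ and the ambient space $(A^1)^* \oplus (A^1)^* \wedge (A^1)^*$ of its linear-and-quadratic relations, which is obtained from the transposes of the differential $A^1 \to A^2$ and the multiplication $A^1 \wedge A^1 \to A^2$. So $\h(A)$ is finitely presented, and $\m(\pi)$ is exhibited as its LCS completion.

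For the reverse direction, suppose $\m(\pi) \cong \widehat{\g}$ with $\g = \Lie(V)/\langle r_1, \ldots, r_s\rangle$ finitely presented and $V$ finite-dimensional. The aim is to construct a $1$-finite $\cdga$ $A$ with $\h(A) \cong \g$: then Theorem \ref{thm:nat1model-intro} identifies $\wC(\h(A)) = \wC(\g)$ as a $1$-minimal model of $A$, while the identification $\bLie(\wC(\g)) = \widehat{\g} = \m(\pi)$ shows that this same $\cdga$ is also the $1$-minimal model of any space with fundamental group $\pi$; hence $A$ is a $1$-finite $1$-model for $\pi$. The construction of $A$ proceeds in two steps. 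First, a standard rewriting—introducing one new generator per nested bracket appearing in any $r_i$, together with the corresponding defining equation—replaces $\g$ by an isomorphic presentation $\Lie(V')/\langle \rho_1, \ldots, \rho_t\rangle$ in which $V'$ is finite-dimensional and each $\rho_j = \ell_j + q_j$ is a sum of a linear term $\ell_j \in V'$ and a quadratic term $q_j \in V' \wedge V'$. Second, I dualize this linear-quadratic presentation: set $A^0 = \k$, $A^1 = (V')^*$, $A^2$ the $t$-dimensional space dual to the relation space, and $A^{\ge 3} = 0$; the differential $d\colon A^1 \to A^2$ and the multiplication $\mu\colon A^1 \wedge A^1 \to A^2$ are defined so that their transposes recover the linear parts $\ell_j$ and the quadratic parts $q_j$. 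The $\cdga$ axioms hold trivially because $A^{\ge 3} = 0$, and $\h(A) \cong \g$ holds by construction.

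The main obstacle is the explicit construction of $A$ in the reverse direction, and in particular the rewriting of $\g$ into linear-quadratic form while keeping the generator set finite. This is a routine but careful manipulation; once it is in place, the dualization to a $\cdga$ is immediate and Theorem \ref{thm:nat1model-intro} takes over to perform the $1$-minimal model identification, thereby closing the loop with the hypothesis on $\m(\pi)$.
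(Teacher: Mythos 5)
Your proposal is correct and follows essentially the same route as the paper: the forward direction via Corollary \ref{cor:malholo} together with the finite presentability of $\h(A)$, and the converse via rewriting the finitely presented Lie algebra into an equivalent linear-plus-quadratic presentation (the paper's Lemma \ref{lem:liepres}), dualizing to a finite $\cdga$ $A$ with $A^{>2}=0$ and $\h(A)\cong L$, and then invoking Theorem \ref{thm:nat1model-intro} plus Sullivan duality to identify $\wC(\h(A))$ with $\M_1(\pi)$. The only point you leave implicit is the bookkeeping in the rewriting step (one needs generators for all nested sub-brackets so the defining relations close up), which the paper carries out explicitly.
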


\section{Algebraic models and finiteness obstructions}
\label{sect:infobs}

\subsection{Differential graded algebras}
\label{subsec:cdga}
We shall fix throughout a ground field $\k$ of characteristic $0$. 
A commutative, differential graded algebra (\cdga) over $\k$ 
is a positively-graded $\k$-vector space, $A=\bigoplus_{i\ge 0} A^i$, 
endowed with a graded-commutative multiplication 
map $\cdot\colon A^i \otimes A^j \to A^{i+j}$, and a differential 
$d\colon A^i\to A^{i+1}$ satisfying $d(a\cdot b) = da\cdot b 
+(-1)^{i} a \cdot db$, for every $a\in A^i$ and $b\in A^j$. 
We let $A=(A^{\hdot},d)$ denote such an object, and write  
$Z^i(A)= \ker (d\colon A^i \to A^{i+1})$, $B^i(A)= \im (d\colon A^{i-1} \to A^i)$, 
and $H^i(A)=Z^i(A)/B^i(A)$. The cohomology of the underlying cochain complex, 
$H^{\hdot}(A)$, is a commutative, graded algebra ($\cga$); 
we let $b_i(A)=\dim H^i(A)$ be its Betti numbers.

Fix an integer $q\ge 1$.  (We will also allow $q=\infty$, although 
we will usually omit $q$ from terminology and notation in that case.) 
We say that two $\cdga$s $A$ and $B$ have {\em the same (homotopy) 
$q$-type}\/ if there is a zig-zag of $\cdga$ maps from $A$ to $B$, 
\begin{equation}
\label{eq:ziggy}
\xymatrix{
A  & A_1 \ar_(.45){\varphi_1}[l]  \ar^{\varphi_2}[r] & \cdots 
& A_{k-1}   \ar[l]\ar^{\varphi_{k}}[r] & B},
\end{equation}
with each map $\varphi_j$ being a {\em $q$-equivalence}, i.e., inducing 
isomorphisms in homology up to degree $q$ and a monomorphism in degree $q+1$.  
(If $q=\infty$, the maps $\varphi_j$ are also called quasi-isomorphisms.) 
Clearly, if $A\simeq_q B$, then $b_i(A)=b_i(B)$ for all $i\le q$.  

Let $\bigwedge V$ be the free graded-commutative 
algebra generated by the graded vector space $V=\bigoplus_{i>0}V^i$. 
Following \cite{Su77,Mo}, we say that a $\cdga$ is {\em $q$-minimal}\/ 
if it is of the form $(\bigwedge V, d)$, where the differential structure is 
the inductive limit of a sequence of Hirsch extensions of increasing 
degrees, and $V^i=0$ for $i>q$, if $q\ne \infty$. 

A {\em $q$-minimal model map}\/ for a $\cdga$ $A$ is a $q$-equivalence 
$(\bigwedge V, d)\to A$ with $(\bigwedge V, d)$ $q$-minimal.
Every $\cdga$ $A$ with connected homology admits such a map. Moreover, 
the isomorphism type of the $\cdga$ $(\bigwedge V, d)$ is uniquely determined;  
this $\cdga$ is called {\em the $q$-minimal model of $A$}, and is 
denoted by $\M_q (A)$. It is readily seen that two $\cdga$s with connected 
cohomology have the same $q$-type if and only if their $q$-minimal 
models are isomorphic. 

\subsection{Algebraic models for spaces and groups}
\label{subsec:algmod}

Given a topological space $X$, we let $\Omega^{\hdot}(X)$ 
be Sullivan's algebra \cite{Su77}  of piecewise polynomial, 
complex-valued forms on $X$, see also \cite{FOT,FHT}. 
This is a functorially defined $\cdga$ with the property that 
$H^{\hdot}(\Omega(X))\cong H^{\hdot}(X,\C)$, as graded rings.  
When $X$ is a smooth manifold, $\Omega(X) \simeq \Omega_{{\rm dR}}(X)$,  
de Rham's algebra of smooth $\C$-forms on $X$.  Furthermore,  
if $X$ is a simplicial complex, then 
$\Omega(X)\simeq \Omega_{{\rm s}}(X)$, the algebra 
of piecewise polynomial, $\C$-forms on the simplices of $X$. 

We say that a $\cdga$ $(A,d)$ is a $q$-model for $X$ 
if $A\simeq_q \Omega(X)$. By considering the classifying space 
$X=K(\pi, 1)$ of a group $\pi$, and replacing $X$ by $\pi$ in both 
terminology and notation, we may speak about $q$-minimal models,
$q$-types and finiteness properties of groups, in the sense from 
Question \ref{mainpbm}.   If $A$ is $q$-finite and $A\simeq_q \Omega(X)$, 
then clearly $b_i(X)=b_i(A)<\infty$, for all $i\le q$ (or all $i$, if $q=\infty$).  
Moreover, if $\pi=\pi_1(X)$ is the fundamental group of a path-connected 
space $X$, then any classifying map $X\to K(\pi,1)$ induces an isomorphism 
between the corresponding $1$-minimal models, $\M_1(X)\cong \M_1(\pi)$.

A continuous map $f\colon X\to Y$ is said to be a {\em $q$-rational homotopy 
equivalence}\/ if the induced map $f^*\colon H^{\hdot}(Y,\Q)\to H^{\hdot}(X,\Q)$ 
is an isomorphism in degrees up to $q$ and a monomorphism in degree $q+1$.  
Clearly, such a map induces a $q$-equivalence 
$\Omega(Y)\simeq_q \Omega(X)$. Consequently, the  
existence of a $q$-finite $q$-model for a space $X$ is an invariant of 
$q$-rational homotopy type, and thus, of $q$-homotopy type. In particular, 
the existence a $1$-finite $1$-model for a path-connected  space $X$ 
is equivalent to the existence a $1$-finite $1$-model for its fundamental 
group $\pi=\pi_1(X)$.

Unless otherwise specified, all spaces we consider here will 
be path-connected, and will have the homotopy type of a 
CW-complex; for short, we will call such objects {\em CW-spaces}. 
Oftentimes, the geometry of a space  forces the existence of 
a finite model for it.  Examples of spaces having finite models 
include quasi-projective manifolds, compact solvmanifolds,
K\"{a}hler manifolds, Sasakian manifolds, and principal bundles 
with compact, connected structural group over finite CW-complexes 
having finite models; see for instance \cite{DP-ccm,FOT,PS17} 
and references therein.

\subsection{Nilpotent spaces and formal spaces}
\label{subsec:nilpformal}

A CW-space $X$ is said to be {\em nilpotent}\/ 
if the fundamental group $\pi=\pi_1(X)$ is nilpotent 
and acts unipotently on $\pi_n (X)$ for all $n>1$.
Nilpotent spaces provide a class of examples for which the answer 
to Question \ref{mainpbm} is unobstructed.

\begin{theorem}[\cite{Su77}]
\label{thm:nilpsp}
Let $X$ be a nilpotent CW-space. 
\begin{enumerate}
\item \label{s1} 
If all the Betti numbers of $X$ are finite, 
then $X$ admits a $q$-finite $q$-model, for all $1\le q<\infty$.

\item \label{s2}
Moreover, if $\dim H_{\hdot}(X, \k)<\infty$, then $X$ admits a finite model.
\end{enumerate}
\end{theorem}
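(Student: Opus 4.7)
The plan is to deploy Sullivan's minimal model theory for nilpotent CW-spaces. The essential input is that, for $X$ nilpotent, the rational Postnikov tower of $X$ decomposes into principal $K(V_n,n)$-fibrations with each $V_n$ a $\k$-vector space. This structure underlies an inductive construction of the Sullivan minimal model $\M(X)=(\bigwedge V,d)$ as a direct limit of Hirsch extensions of increasing degree, where the generators added at stage $n$ sit in $V^n$ and simultaneously control both new cohomology in degree $n$ and the kernel of spurious cocycles in degree $n+1$ produced by the partial minimal model $(\bigwedge V^{<n},d)$.

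For part (\ref{s1}), I would prove by induction on $n$ that $\dim V^n<\infty$, for every $n\le q$. The inductive step combines the finite-dimensionality of $V^{<n}$ (so that $H^n$ and $H^{n+1}$ of the partial model are finite-dimensional) with the hypothesis $b_n(X), b_{n+1}(X)<\infty$; a direct count of the generators added at stage $n$ then closes the induction. Truncation yields the $q$-minimal model $\M_q(X)$, which is then visibly $q$-finite and has the same $q$-type as $\Omega(X)$, providing the desired $q$-finite $q$-model.

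For part (\ref{s2}), the main obstacle appears. Even under the stronger hypothesis $\dim H^{\hdot}(X,\k)<\infty$, the minimal model $\M(X)$ is typically infinite-dimensional, as shown by $\CP^n$ or $S^{2k}$. One must therefore construct a \emph{finite} (necessarily non-minimal) Sullivan $\cdga$ model $B$ quasi-isomorphic to $\Omega(X)$. The strategy is to build $B$ iteratively as a finitely generated free $\cdga$ whose cohomology realizes the finite-dimensional graded algebra $H^{\hdot}(X,\k)$, and then verify a quasi-isomorphism $B\to\M(X)$ through a bigraded model construction. The central point --- and the hard part --- is to show that this iterative construction terminates in finitely many stages. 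This requires a double grading argument (cohomological degree paired with a secondary word-length on the generators) and uses both nilpotency and the finite-dimensionality of $H^{\hdot}(X,\k)$ in an essential way; absent either hypothesis, no such termination is available and the conclusion fails, as illustrated by the examples appearing later in the paper.
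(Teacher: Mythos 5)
Part \eqref{s1} of your proposal is fine and is essentially the paper's argument: the only input is Sullivan's theorem that the minimal model of a nilpotent CW-space with finite Betti numbers is of finite type, i.e.\ $\M(X)=(\bigwedge V,d)$ with each $V^i$ finite-dimensional; the truncation $\M_q(X)$ is then visibly a $q$-finite $q$-model. Whether you cite this or re-derive it by induction over the Hirsch extensions is immaterial.

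Part \eqref{s2} contains a genuine gap, and the strategy you sketch cannot work as stated. You propose to construct a \emph{finite} model $B$ that is at the same time a \emph{finitely generated free} \cdga. These two requirements are incompatible as soon as $X$ has nonzero cohomology in some positive even degree: a free graded-commutative algebra with an even-degree generator is infinite-dimensional, so for your own example $X=\CP^n$ no free \cdga\ --- minimal or not --- can be finite in the sense of the paper ($\sum_i\dim B^i<\infty$). A finite model must therefore be sought among non-free \cdga s, and no bigraded-model or termination argument will rescue a construction confined to free ones. Moreover, even granting the framework, you explicitly defer the termination step that you yourself identify as the crux, so no proof is actually given.

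The intended argument is a short truncation of the output of part \eqref{s1}. Choose $n$ with $H^{>n}(X)=0$ and a vector space splitting $\M^n(X)=Z^n(\M(X))\oplus C^n$. Then $J:=\M^{\ge n+1}(X)\oplus C^n$ is a differential graded ideal: products of $C^n$ with elements of positive degree land in degree $\ge n+1$, and $dC^n\subseteq \M^{n+1}(X)$. It is acyclic: $C^n$ contains no cocycles; $Z^{n+1}(\M(X))=d\,\M^n(X)=dC^n$ because $H^{n+1}(\M(X))=0$; and $H^{i}(J)=H^{i}(\M(X))=0$ for $i\ge n+2$. Hence the projection $\M(X)\to \M(X)/J$ is a quasi-isomorphism, and the quotient is finite because $\M(X)$ has finite type and $\M(X)/J$ vanishes above degree $n$. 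Nilpotency and the finiteness of $H_{\hdot}(X,\k)$ enter only through the finite-type statement already used in part \eqref{s1}; no secondary word-length grading is needed.
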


\begin{proof}
Sullivan proved in \cite{Su77} that the minimal model of a nilpotent 
CW-space with finite Betti numbers is of the form $\M (X)= (\bigwedge V, d)$,
where $V$ is a graded vector space of finite type. Hence, $\M (X)$ is of finite 
type, as a graded vector space, from which claim \ref{s1}  follows. 

Assume now that $H^{>n}(X)=0$, for some $n>0$. Pick a vector 
space decomposition, $\M^n (X)= Z^{n}(\M (X)) \oplus C^{n}$.  Plainly,  
the direct sum $J= \M^{\ge n+1}(X) \oplus C^n$ is an acyclic
differential graded ideal of $\M (X)$. By construction, 
$\Omega (X)\simeq \M (X)/J$, and the $\cdga$ $\M (X)/J$ is finite;  
thus, claim \ref{s2} is also verified.
\end{proof}

A space $X$ is said to be {\em $q$-formal}\/ if 
$\Omega (X)\simeq_q (H^{\hdot}(X,\C), d=0)$. 
For this interesting class of spaces, 
the answer to Question \ref{mainpbm} is again unobstructed: clearly, if 
a space $X$ is $q$-formal and $q$-finite, then it has the $q$-finite $q$-model 
$(H^{\hdot}(X,\C), d=0)$. 

\subsection{On the Betti numbers of minimal models}
\label{subsec:infobs}

We now go beyond nilpotent rational homotopy theory 
and formal spaces. We fix $1\le q< \infty$.

\begin{lemma}
\label{lem:morefin}
Given a $q$-finite $\cdga$ $A$, there is a natural equivalence 
$A \simeq_q A[q]$, where $A[q]$ is a finite $\cdga$, and $A[q]^{>q+1}=0$.
\end{lemma}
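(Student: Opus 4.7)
The plan is to realize $A \simeq_q A[q]$ via a zig-zag $A \hookleftarrow A' \twoheadrightarrow A[q]$ rather than a single map. A direct $q$-equivalence between $A$ and a finite $\cdga$ would force a monomorphism $H^{q+1}(A) \hookrightarrow H^{q+1}(A[q])$ into a finite-dimensional space, whereas $q$-finiteness of $A$ places no a priori bound on $H^{q+1}(A)$, so an intermediate object is essential.

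For the left leg, let $A' \subseteq A$ be the smallest sub-$\cdga$ containing $\bigoplus_{i\le q} A^i$. Using $d^2=0$, one checks that $A'$ coincides with the graded subalgebra generated by the finite-dimensional graded subspace $V := \bigoplus_{i\le q} A^i \oplus B^{q+1}(A)$. A straightforward count of monomials then shows each ${A'}^n$ is finite-dimensional; in particular ${A'}^{q+1}$ is spanned by $B^{q+1}(A)$ together with the products $A^i \cdot A^j$ for $i+j = q+1$ and $i,j \ge 1$. The inclusion $\iota \colon A' \hookrightarrow A$ is a $q$-equivalence: it is the identity in degrees $\le q$, so $H^{\le q}(\iota)$ is an isomorphism; and since $B^{q+1}(A') = B^{q+1}(A)$, the map $H^{q+1}(\iota)$ equals the inclusion $Z^{q+1}(A')/B^{q+1}(A) \hookrightarrow Z^{q+1}(A)/B^{q+1}(A)$ of a subspace.

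For the right leg, set $A[q] := A'/J$ with $J := {A'}^{>q+1}$, an evident DG-ideal. Since $A[q]^i = {A'}^i$ for $i \le q+1$ and vanishes in higher degrees, the finite-dimensionality of ${A'}^{\le q+1}$ makes $A[q]$ a finite $\cdga$ with $A[q]^{>q+1} = 0$. The projection $\pi \colon A' \twoheadrightarrow A[q]$ is the identity in degrees $\le q+1$ (only the differential leaving degree $q+1$ is zeroed), so $H^{\le q}(\pi)$ is an isomorphism; on $H^{q+1}$, every element of $A[q]^{q+1}$ is a cocycle (as $A[q]^{q+2}=0$) while boundaries persist as $B^{q+1}(A)$, so $H^{q+1}(\pi)$ is the inclusion $Z^{q+1}(A')/B^{q+1}(A) \hookrightarrow {A'}^{q+1}/B^{q+1}(A)$. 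Composing yields $A \simeq_q A[q]$, and naturality is immediate since both constructions are manifestly functorial in $\cdga$ maps. The only technical point requiring care is the finite-dimensionality count for ${A'}^{\le q+1}$, which ultimately reduces to the observation that $A'$ is generated as a graded algebra by a finite-dimensional graded subspace concentrated in degrees $\le q+1$.
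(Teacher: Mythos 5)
Your construction is correct and produces exactly the same finite \cdga\ $A[q]$ as the paper; the only difference is the order of the two steps — you pass to the sub-\cdga\ generated in degrees $\le q+1$ first and then truncate, whereas the paper first truncates to $A/A^{>q+1}$ and then includes the sub-\cdga\ $\bigoplus_{i\le q}A^i \oplus \bigl(dA^q + \sum_{i+j=q+1} A^i\cdot A^j\bigr)$. The reordering is immaterial, so this is essentially the paper's argument (with the verifications of the two $q$-equivalences spelled out in more detail).
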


\begin{proof}
The construction is in two steps. First, we replace $A$ by the $\cdga$ 
$A/A^{>q+1}$. Since clearly the natural projection,
$A \surj A/A^{>q+1}$, is a $q$-equivalence, we may suppose in 
the second step that $A^{>q+1}=0$. Next, we put 
\begin{equation}
\label{eq:aq}
A[q]:= \bigoplus_{i\le q}A^i \oplus \bigg(dA^q + 
\sum_{i,j \le q}^{i+j =q+1} A^i \cdot A^j\bigg).
\end{equation}

Plainly, $A[q]$ is a sub-$\cdga$ of $A$. 
By construction, the natural inclusion, $A[q] \inj A$, is a 
$q$-equivalence, and $A[q]$ is finite. 
\end{proof}

\begin{theorem}
\label{prop:infobsq}
Assume that either $\Omega (X)\simeq_q A$ with $A$ $q$-finite, or 
$X$ is $(q+1)$-finite. Then $b_i(\M_q(X))< \infty$, for all $i\le q+1$.
\end{theorem}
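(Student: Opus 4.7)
The plan is to reduce both hypotheses to a common setup—\(\Omega(X)\simeq_q B\) with \(B\) a finite \cdga{}—and then exploit the uniqueness of the \(q\)-minimal model together with the existence of a direct \(q\)-equivalence from \(\M_q(B)\) to \(B\).

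First I would show that case 2 reduces to case 1. Assuming \(X\) is \((q+1)\)-finite, replace it up to homotopy by a CW-complex whose \((q+1)\)-skeleton \(Y\) is a finite simplicial complex. The pair \((X,Y)\) has relative cells only in dimensions \(\geq q+2\), so \(H^i(X,Y;\C)=0\) for all \(i\le q+1\). The long exact cohomology sequence of the pair then forces the restriction map \(H^i(X,\C)\to H^i(Y,\C)\) to be an isomorphism for \(i\le q\) and a monomorphism for \(i=q+1\); equivalently, \(\Omega(X)\to\Omega(Y)\) is a \(q\)-equivalence. Since \(Y\) is a finite simplicial complex, \(\Omega(Y)\simeq\Omega_s(Y)\), and the latter is finite-dimensional as a \cdga. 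Thus \(X\) admits a \(q\)-finite (in fact, finite) \(q\)-model, putting us in case 1.

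For case 1, apply Lemma \ref{lem:morefin} to replace \(A\) by the finite \cdga{} \(A[q]\); we then have \(\Omega(X)\simeq_q A[q]\) with \(A[q]\) finite. By the uniqueness statement recorded at the end of \S\ref{subsec:cdga}, \(\M_q(X)\cong\M_q(A[q])\), and the tautological \(q\)-minimal model map \(\M_q(A[q])\to A[q]\) is a \emph{direct} \(q\)-equivalence. Composing through the isomorphism gives
\[
H^i(\M_q(X))\cong H^i(A[q]) \quad \text{for } i\le q,
\]
together with a monomorphism
\[
H^{q+1}(\M_q(X))\hookrightarrow H^{q+1}(A[q]).
\]
Since \(A[q]\) is finite, \(\dim H^i(A[q])\le\dim A[q]^i<\infty\) for every \(i\), and the conclusion \(b_i(\M_q(X))<\infty\) for all \(i\le q+1\) follows.

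The main subtlety lies in the bound in degree \(q+1\). The abstract relation \(\simeq_q\), realized as a zig-zag, only a priori preserves cohomology isomorphisms up through degree \(q\); it does not automatically transport injectivity in degree \(q+1\) through a long chain of arrows. What rescues the argument is that \(\M_q(X)\), being \(q\)-minimal, admits a \emph{direct} map (rather than a zig-zag) to the chosen finite representative \(A[q]\), via the isomorphism \(\M_q(X)\cong\M_q(A[q])\) and the canonical minimal model map. Once this is recognized, everything else is formal given Lemma \ref{lem:morefin} and the finite-skeleton reduction.
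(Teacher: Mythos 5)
Your treatment of the first case is correct and matches the paper's argument: reduce to a finite model via Lemma \ref{lem:morefin}, identify $\M_q(X)$ with $\M_q(A[q])$ by uniqueness of the $q$-minimal model, and use the \emph{direct} $q$-equivalence $\M_q(A[q])\to A[q]$ to transport finiteness, including the monomorphism in degree $q+1$.

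Your reduction of the second case to the first, however, contains a genuine error. You assert that for a finite simplicial complex $Y$ the Sullivan algebra $\Omega_{\rm s}(Y)$ is ``finite-dimensional as a \cdga.'' It is not: already for a single $1$-simplex, $\Omega_{\rm s}(\Delta^1)$ consists of all polynomial forms $p(t)+q(t)\,dt$ and is infinite-dimensional. More seriously, no repair along these lines is possible: a $(q+1)$-finite space need \emph{not} admit a $q$-finite $q$-model --- that is exactly the content of Question \ref{mainpbm}, and Example \ref{ex:fpresgrp} exhibits a finite presentation complex with no $1$-finite $1$-model. So case 2 genuinely does not reduce to case 1. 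The correct argument for the second case is more direct and does not pass through a finite model at all: the tautological $q$-minimal model map $\M_q(X)\to\Omega(X)$ is itself a single $q$-equivalence, so $H^i(\M_q(X))\cong H^i(X,\C)$ for $i\le q$ and $H^{q+1}(\M_q(X))$ injects into $H^{q+1}(X,\C)$; since $X$ is $(q+1)$-finite, its Betti numbers through degree $q+1$ are finite, and the conclusion follows. Note that this is precisely the same ``direct map beats the zig-zag'' mechanism you correctly isolated in your closing remarks --- you only needed to apply it to the target $\Omega(X)$ rather than to a (nonexistent) finite model.
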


\begin{proof}
In the first case, we may assume by Lemma \ref{lem:morefin} that the 
$\cdga$ $A$ is actually finite. Since $\Omega (X)\simeq_q A$, the 
differential graded algebra $\M_q(X)$ is the $q$-minimal model of $A$. 
The claim follows from the discussion in \S\ref{subsec:cdga}.

In the second case, the claim follows directly from the existence 
of a $q$-equivalence $\M_q(X)\to \Omega(X)$. 
\end{proof}

\begin{corollary}
\label{cor:infobs1}
Let $\pi$ be a finitely generated group. Assume that either $\pi$ has 
a $1$-finite $1$-model, or $\pi$ is finitely presented. Then $b_2(\M_1(\pi))< \infty$.
\end{corollary}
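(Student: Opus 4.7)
The plan is to deduce this corollary directly from Theorem \ref{prop:infobsq} applied with $q=1$ to the classifying space $X=K(\pi,1)$, combined with the fact recorded earlier in \S\ref{subsec:algmod} that a classifying map $X\to K(\pi,1)$ induces an isomorphism $\M_1(X)\cong \M_1(\pi)$. Once these two identifications are set up, the corollary reduces to a bookkeeping check that the two hypotheses on $\pi$ translate into the two alternative hypotheses of Theorem \ref{prop:infobsq}.

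For the first alternative, suppose $\pi$ admits a $1$-finite $1$-model $A$. As noted in \S\ref{subsec:algmod}, having a $1$-finite $1$-model is a property of the $1$-type, and for path-connected CW-spaces it is equivalent for $X$ and for $\pi_1(X)$. Hence taking $X=K(\pi,1)$ yields $\Omega(X)\simeq_1 A$ with $A$ being $1$-finite, which is exactly the first hypothesis of Theorem \ref{prop:infobsq} with $q=1$.

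For the second alternative, assume $\pi$ is finitely presented. Then a standard construction builds a classifying space $X=K(\pi,1)$ whose $2$-skeleton is the (finite) presentation $2$-complex of $\pi$, obtained by attaching higher-dimensional cells to kill all higher homotopy. Such an $X$ is $2$-finite, i.e., $(q+1)$-finite for $q=1$, so the second hypothesis of Theorem \ref{prop:infobsq} is satisfied.

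In either case, Theorem \ref{prop:infobsq} gives $b_i(\M_1(X))<\infty$ for $i\le 2$; specializing to $i=2$ and invoking $\M_1(X)\cong \M_1(\pi)$ yields $b_2(\M_1(\pi))<\infty$, as claimed. There is no real obstacle here: the work has all been done in Theorem \ref{prop:infobsq} and in the preparatory discussion of \S\ref{subsec:algmod}; the only point requiring a moment's care is to verify the implication ``$\pi$ finitely presented $\Rightarrow$ $K(\pi,1)$ is $2$-finite'', which is the standard Eilenberg--MacLane construction starting from the presentation $2$-complex.
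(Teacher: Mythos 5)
Your proposal is correct and follows essentially the same route as the paper: translate both hypotheses into the two alternatives of Theorem \ref{prop:infobsq} for $X=K(\pi,1)$ (using that finite presentability gives a $2$-finite classifying space, and that having a $1$-finite $1$-model is a property shared by $X$ and $\pi_1(X)$), then apply the theorem with $q=1$. No issues.
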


\begin{proof}
Observe that $\pi$ is 
finitely generated (resp., finitely presented) if and only if $\pi$ admits 
a classifying space  $X=K(\pi,1)$ which is $1$-finite 
(resp., $2$-finite). The claim follows at once from the above theorem, 
by setting $q=1$.
\end{proof}

\subsection{Equivariant algebraic models}
\label{subsec:equiv}

Let $\Phi$ be a finite group acting freely on a space $Y$, 
and let $X=Y/\Phi$ be the orbit space.  Our next goal is 
to compare the algebraic models associated to $Y$ and $X$, 
and understand how the answer to Question \ref{mainpbm} for one 
space affects the answer for the other one.

We start by setting up the category $\Phi$-$\cdga$ (over $\k$): 
the objects are $\cdga$s $A$ endowed with a compatible $\Phi$-action, 
while the morphisms are $\Phi$-equivariant $\cdga$ maps  
$A\to B$. Given a $\Phi$-$\cdga$ $A$, we let $A^{\Phi}$ be the 
sub-$\cdga$ of elements fixed by $\Phi$; there is then a canonical 
$\cdga$ map $A^{\Phi}\to A$. By definition, a $q$-equivalence $A \simeq_q B$ 
in $\Phi$-$\cdga$ ($1\le q\le \infty$) is a zigzag of $\Phi$-equivariant 
$q$-equivalences in $\cdga$.

\begin{lemma}
\label{eq:phicdga}
If $A \simeq_q B$ in $\Phi$-$\cdga$, then $A^{\Phi} \simeq_q B^{\Phi}$ 
in $\cdga$.
\end{lemma}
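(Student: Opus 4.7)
The plan is to observe that taking $\Phi$-invariants is a functor $\Phi$-$\cdga \to \cdga$, so it automatically transports a zigzag of $\Phi$-equivariant $q$-equivalences $A \leftarrow A_1 \to \cdots \to B$ to a zigzag of $\cdga$ maps $A^{\Phi} \leftarrow A_1^{\Phi} \to \cdots \to B^{\Phi}$. Thus it suffices to show that if $\varphi\colon A\to B$ is a single $\Phi$-equivariant $q$-equivalence, then the restriction $\varphi^{\Phi}\colon A^{\Phi}\to B^{\Phi}$ is again a $q$-equivalence of $\cdga$s.

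The key tool is the Reynolds/averaging operator $\pi_A = \frac{1}{|\Phi|}\sum_{g\in \Phi} g \colon A \to A$, which is well-defined since $\k$ has characteristic $0$ and $\Phi$ is finite. I would first check that $\pi_A$ is a $\cdga$ projection onto the sub-$\cdga$ $A^{\Phi}$ (it is $\k$-linear, commutes with $d$, is idempotent with image $A^{\Phi}$, and is natural with respect to $\Phi$-equivariant $\cdga$ maps, so $\pi_B\circ \varphi = \varphi \circ \pi_A$). From this I extract two consequences: the functor $(-)^{\Phi}$ is exact on $\Phi$-modules (Maschke), and there is a canonical, natural isomorphism $H^i(A^{\Phi})\cong H^i(A)^{\Phi}$ obtained by noting that $A^{\Phi}\hookrightarrow A$ is split by $\pi_A$ and that cocycles and coboundaries split compatibly.

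Now suppose $\varphi\colon A\to B$ is a $\Phi$-equivariant $q$-equivalence. Then each induced map $H^i(\varphi)\colon H^i(A)\to H^i(B)$ is a morphism of $\Phi$-modules, an isomorphism for $i\le q$, and a monomorphism for $i=q+1$. Applying the exact functor $(-)^{\Phi}$ preserves isomorphisms and injections, hence $H^i(\varphi)^{\Phi}$ is an isomorphism for $i\le q$ and a monomorphism for $i=q+1$. Composing with the natural identification $H^i((-)^{\Phi})\cong H^i(-)^{\Phi}$ yields the same conclusion for $H^i(\varphi^{\Phi})$, which is exactly the statement that $\varphi^{\Phi}$ is a $q$-equivalence in $\cdga$. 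Piecing together the zigzag gives $A^{\Phi}\simeq_q B^{\Phi}$.

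There is no real obstacle: the only place characteristic $0$ is used is to make $\pi_A$ available, and once one has this projector everything is formal. The main thing to be careful about is the bookkeeping in the mixed degree $q+1$ (isomorphism versus monomorphism), which is handled uniformly because exactness of $(-)^{\Phi}$ preserves both properties simultaneously.
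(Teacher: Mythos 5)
Your proof is correct and follows the same route as the paper, which simply notes that in characteristic $0$ the fixed-points functor $(-)^{\Phi}$ commutes with homology and therefore carries equivariant $q$-equivalences to $q$-equivalences; your use of the averaging operator just makes that one-line observation explicit.
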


\begin{proof}
It is readily seen that the fixed points functor $-^{\Phi}$ 
commutes with homology. Thus, this functor takes equivariant $q$-equivalences 
to $q$-equivalences. 
\end{proof}

As is well-known, every CW-complex $X$ has the homotopy type 
of a simplicial complex $K$; moreover, if $X$ has finite $q$-skeleton, 
so does $K$; see \cite[Theorem 2C.5]{Hat}. 
Fix such a triangulation of $X$, and lift it to the cover $Y$.   
The corresponding simplicial Sullivan algebras are 
then related, as follows: 
\begin{equation}
\label{eq:omphi}
\Omega_{{\rm s}}(X)=\Omega_{{\rm s}}(Y)^{\Phi}.
\end{equation}
Using now Lemma \ref{eq:phicdga}, we obtain the following 
result. 

\begin{prop}
\label{prop:equiv}
Let $X$ be a CW-space, and let $Y\to X$ be a finite Galois 
cover, with group of deck transformations $\Phi$.  Let $A$ 
be a $\Phi$-$\cdga$ over $\C$.
\begin{enumerate}
\item \label{phi1}
Suppose $\Omega(Y) \simeq_q A$ in 
$\Phi$-$\cdga$, for some $1\le q\le \infty$. Then 
$\Omega(X) \simeq_q A^{\Phi}$ in $\cdga$. 
\item \label{phi2}
If, moreover, $A$ is $q$-finite, then $A^{\Phi}$ is 
$q$-finite.
\end{enumerate}
\end{prop}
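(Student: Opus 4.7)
The plan is to reduce the first assertion to the simplicial setting, where equation \eqref{eq:omphi} already identifies $\Omega_{{\rm s}}(X)$ with $\Omega_{{\rm s}}(Y)^{\Phi}$, and then to apply Lemma \ref{eq:phicdga} to transport the equivariant $q$-equivalence through the fixed-points functor.

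For part \ref{phi1}, I first fix a $\Phi$-equivariant triangulation: by the result from \cite{Hat} cited just above, $X$ is homotopy equivalent to a simplicial complex, and this triangulation lifts to the finite cover $Y$ so that $\Phi$ acts simplicially and freely. Equation \eqref{eq:omphi} then reads $\Omega_{{\rm s}}(X) = \Omega_{{\rm s}}(Y)^{\Phi}$. Next I invoke the natural (hence $\Phi$-equivariant) quasi-isomorphism $\Omega(Y) \simeq \Omega_{{\rm s}}(Y)$ available for simplicial complexes; combined with the assumed equivariant $q$-equivalence $\Omega(Y) \simeq_q A$, this yields a zig-zag $\Omega_{{\rm s}}(Y) \simeq_q A$ in $\Phi$-$\cdga$. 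Applying Lemma \ref{eq:phicdga} gives $\Omega_{{\rm s}}(Y)^{\Phi} \simeq_q A^{\Phi}$ in $\cdga$, and rewriting the left-hand side via \eqref{eq:omphi} and then via the (non-equivariant) equivalence $\Omega(X) \simeq \Omega_{{\rm s}}(X)$ produces the desired $\Omega(X) \simeq_q A^{\Phi}$.

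Part \ref{phi2} is essentially a dimension count. Since $\Phi$ acts by $\cdga$ automorphisms, $A^{\Phi}$ is a sub-$\cdga$ of $A$, so $\dim (A^{\Phi})^i \le \dim A^i$ for every $i$. The $\Phi$-action fixes the unit, and $A^0 = \C\cdot 1$ by connectivity of $A$, so $(A^{\Phi})^0 = \C\cdot 1$; together with $\sum_{i\le q}\dim A^i<\infty$, this establishes $q$-finiteness of $A^{\Phi}$.

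The main technical point I anticipate is verifying that the standard comparison $\Omega \simeq \Omega_{{\rm s}}$ on a simplicial complex can be realized as a $\Phi$-equivariant zig-zag. This should follow from the functoriality of both constructions under continuous (respectively, simplicial) maps, so that the deck transformations act compatibly on every intermediate $\cdga$; once that naturality is granted, the remainder of the argument is a formal chaining of equivalences in $\Phi$-$\cdga$ and $\cdga$.
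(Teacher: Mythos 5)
Your proof is correct and follows essentially the same route as the paper: lift a triangulation of $X$ to $Y$, use the identity $\Omega_{{\rm s}}(X)=\Omega_{{\rm s}}(Y)^{\Phi}$ from \eqref{eq:omphi} together with the naturality of the comparison $\Omega\simeq\Omega_{{\rm s}}$, and push the equivariant $q$-equivalence through the fixed-points functor via Lemma \ref{eq:phicdga}; part \ref{phi2} is the same easy dimension count. No issues.
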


As a consequence, if $Y$ admits an equivariant $q$-finite $q$-model,   
then $X$ admits a $q$-finite $q$-model.  
The $\Phi$-equivariant equivalence hypothesis from 
Proposition \ref{prop:equiv}\ref{phi1} cannot be completely dropped. 
Nevertheless, we venture the following conjecture.

\begin{conjecture}
\label{conj:equiv}
Let $X$ be a connected $CW$-space, 
and let $Y \to X$ be a finite Galois cover with deck group $\Phi$. 
Suppose that $Y$ has finite Betti numbers.
Let $A$ be a $\Phi$-$\cdga$, and assume that there is a zig-zag of 
quasi-isomorphisms connecting $\Omega (Y)$ to $A$ in $\cdga$,
such that the induced isomorphism between $H^{\hdot}(Y,\C)$ 
and $H^{\hdot}(A)$ is $\Phi$-equivariant. Then $A^{\Phi}$ is 
a model for $X$.
\end{conjecture}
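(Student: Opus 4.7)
The strategy is to upgrade the assumed non-equivariant zig-zag to a $\Phi$-equivariant one, at which point Lemma \ref{eq:phicdga} combined with the identity $\Omega_{{\rm s}}(X)=\Omega_{{\rm s}}(Y)^{\Phi}$ of \eqref{eq:omphi} would immediately yield $A^{\Phi}\simeq\Omega(X)$ in $\cdga$. First, I would observe that since $|\Phi|$ is invertible in $\C$, the averaging idempotent $e_{\Phi}=\tfrac{1}{|\Phi|}\sum_{g\in\Phi}g$ splits any $\k[\Phi]$-module into fixed and complementary parts, so the fixed-points functor is exact on $\Phi$-modules and $H^{\hdot}(C^{\Phi})\cong H^{\hdot}(C)^{\Phi}$ for every $\Phi$-$\cdga$ $C$. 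Combined with the $\Phi$-equivariance hypothesis on the induced cohomology isomorphism, this already shows that $A^{\Phi}$ and $\Omega(X)$ have isomorphic cohomology rings; the real content is to produce an actual $\cdga$-theoretic equivalence.

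Next, I would construct $\Phi$-equivariant Sullivan minimal models. Because $\Phi$ is finite and we work in characteristic zero, one can build a minimal model $\rho\colon M\to\Omega(Y)$ inductively via $\Phi$-equivariant Hirsch extensions $M_{k-1}\inj M_{k-1}\otimes\bigwedge V_{k}$, where at each stage $V_{k}$ is chosen as a $\k[\Phi]$-submodule of the relevant cohomology module (finite-dimensional because $Y$ has finite Betti numbers); equivariant lifts of the attaching data exist because the obstructions live in $H^{\ge 1}(\Phi,-)$, which vanishes when $|\Phi|$ is invertible. Applying the same construction to $A$ yields a $\Phi$-equivariant minimal model $\sigma\colon N\to A$.

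The crux, and the reason this remains a conjecture, is to show that $M$ and $N$ are isomorphic as $\Phi$-$\cdga$s. Non-equivariantly, both are Sullivan minimal models of $\cdga$s sharing the same rational homotopy type (via the hypothesized zig-zag connecting $\Omega(Y)$ and $A$), hence are $\cdga$-isomorphic by the standard uniqueness of minimal models. I would then attempt to deform any such non-equivariant isomorphism $\varphi\colon M\isom N$ into an equivariant one by induction along the Hirsch filtration: at each stage, the obstruction to equivariance of $\varphi|_{M_{k}}$ is a cocycle valued in a $\k[\Phi]$-module of derivations, and should be killed by applying a suitable equivariant $\cdga$-homotopy built from $e_{\Phi}$-averaged defect terms. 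The input that starts the induction is precisely the assumption that $\varphi$ is equivariant on cohomology.

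The hard part is making this averaging rigorous: averaging linear data is trivial, but averaging \emph{$\cdga$ maps} (which must simultaneously respect the differential and the product) is not, so one has to work with homotopy-equivariant maps and then rigidify them using the vanishing of $H^{\ge 1}(\Phi,-)$ on finite-dimensional $\k[\Phi]$-modules. Once $M\cong N$ as $\Phi$-$\cdga$s is established, Lemma \ref{eq:phicdga} applied to the equivariant quasi-isomorphisms $\rho$ and $\sigma$ produces the desired zig-zag $\Omega(X)=\Omega_{{\rm s}}(Y)^{\Phi}\simeq M^{\Phi}\cong N^{\Phi}\simeq A^{\Phi}$ in $\cdga$, establishing the conjecture.
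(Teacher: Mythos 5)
The statement you are trying to prove is stated in the paper as a \emph{conjecture}: the authors explicitly only ``venture'' it, prove nothing in the general case, and establish only the formal case (Proposition \ref{prop:equivformal}), via the Halperin--Stasheff theory \cite{HSt} and \cite{Pap}, which lets them detect formality equivariantly. So there is no proof in the paper to compare against, and your text is not a proof either --- it is a strategy outline that stalls exactly at the point that makes the statement open. You say so yourself (``The crux, and the reason this remains a conjecture\dots'', ``The hard part is making this averaging rigorous''), which is honest, but it means the proposal has a genuine, unresolved gap rather than a complete argument.

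The gap is concretely this: from the hypothesis you only know that \emph{some} zig-zag of quasi-isomorphisms connects $\Omega(Y)$ to $A$ and that the \emph{induced map on cohomology} is $\Phi$-equivariant. Passing to equivariant minimal models $M\to\Omega(Y)$ and $N\to A$ (which can indeed be built, as you sketch), you obtain a non-equivariant isomorphism $\varphi\colon M\isom N$ whose conjugates $g^{-1}\varphi g$ all induce the same map on cohomology. But for non-formal $\cdga$s, two maps between minimal models inducing the same map on cohomology need not be homotopic --- the discrepancy is measured by the Halperin--Stasheff obstruction classes --- so you cannot even conclude that $\varphi$ is equivariant \emph{up to homotopy}, let alone rigidify it. Your proposed fix, averaging ``defect terms'' with $e_{\Phi}$, does not obviously apply because $\cdga$ maps are not closed under linear combination (multiplicativity is a quadratic, not linear, condition), and the derivation-valued obstruction cocycles you invoke are never defined or shown to vanish. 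This is precisely the step the authors could not carry out in general; they succeed only when $X$ is formal, where the zig-zag can be replaced by the canonical one to $(H^{\hdot}(Y),d=0)$ and the equivariance question disappears. As written, your argument proves nothing beyond the already-known Proposition \ref{prop:equiv} (where strict equivariance of the zig-zag is assumed) and the cohomology-level isomorphism $H^{\hdot}(A^{\Phi})\cong H^{\hdot}(X,\C)$.
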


In the formal case this conjecture holds, and  
leads to the following result (see also \cite[\S2.2]{BLO}). 

\begin{prop}
\label{prop:equivformal}
Suppose $\Phi$ is a finite group acting simplicially on a formal 
simplicial complex $Y$ with finite Betti numbers.  Then 
the orbit space $X=Y/\Phi$ is again formal.
\end{prop}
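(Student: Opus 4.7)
The plan is to apply Proposition \ref{prop:equiv} with $A = (H^{\hdot}(Y,\C), d=0)$ endowed with the natural $\Phi$-action on cohomology. For this to work, the main step is to upgrade the formality quasi-isomorphism between $\Omega_{\rm s}(Y)$ and $H^{\hdot}(Y,\C)$ to a zig-zag in the category $\Phi$-$\cdga$.

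The first step is to construct a $\Phi$-equivariant minimal model $\rho\colon (\bwedge V, d) \to \Omega_{\rm s}(Y)$. Sullivan's inductive construction of the minimal model by Hirsch extensions can be carried out in the $\Phi$-equivariant category, since at each stage one may choose the finite-dimensional complements of cocycles/coboundaries inside $\Phi$-stable subspaces; this is possible because $\Phi$ is finite and $\ch\k=0$, so every short exact sequence of $\k[\Phi]$-modules splits $\Phi$-equivariantly. (Equivalently, one may invoke Triantafillou's equivariant rational homotopy theory.) In particular, the spaces of generators $V^i$ carry a compatible $\Phi$-action and $\rho$ is $\Phi$-equivariant.

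The second step is to produce a $\Phi$-equivariant formality map $(\bwedge V, d) \to (H^{\hdot}(Y,\C), 0)$. Here I would appeal to the bigraded model of Halperin--Stasheff: since $Y$ is formal and has finite Betti numbers, $(\bwedge V, d)$ admits a lower grading $V = \bigoplus_{p\ge 0} V_p$ with $d V_p \subset (\bwedge V)_{p-1}$ and such that the natural projection onto $(\bwedge V_0)/(dV_1) \cong H^{\hdot}(Y,\C)$ is a quasi-isomorphism. Running the inductive construction of this bigrading in the $\k[\Phi]$-module category (again possible by semisimplicity of $\k[\Phi]$) yields a $\Phi$-stable decomposition, and the resulting projection is then $\Phi$-equivariant. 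Composing with $\rho^{-1}$ gives a zig-zag of quasi-isomorphisms $\Omega_{\rm s}(Y) \simeq (H^{\hdot}(Y,\C), 0)$ in $\Phi$-$\cdga$.

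With this equivariant formality in hand, Proposition \ref{prop:equiv}\ref{phi1} immediately gives $\Omega_{\rm s}(X) \simeq (H^{\hdot}(Y,\C), 0)^{\Phi} = (H^{\hdot}(Y,\C)^{\Phi}, 0)$ in $\cdga$. The standard transfer argument in characteristic $0$ identifies $H^{\hdot}(Y,\C)^{\Phi}$ with $H^{\hdot}(X,\C)$ (for a simplicial action, this follows directly from $\Omega_{\rm s}(X) = \Omega_{\rm s}(Y)^{\Phi}$ combined with exactness of the functor of $\Phi$-invariants). Thus $\Omega(X)$ is quasi-isomorphic to $(H^{\hdot}(X,\C), 0)$, which is the definition of formality. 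The principal obstacle is the equivariant formality step: while the averaging philosophy is classical, some care is needed to run both the minimal model construction and the bigraded refinement simultaneously in the $\Phi$-equivariant category so as to produce genuinely $\Phi$-equivariant $\cdga$ maps rather than merely $\Phi$-invariant isomorphism classes.
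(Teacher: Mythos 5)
Your overall strategy is the same as the paper's: upgrade the formality of $Y$ to a quasi-isomorphism $\Omega_{\rm s}(Y)\simeq (H^{\hdot}(Y),0)$ in the category $\Phi$-$\cdga$, then apply the fixed-points functor (Lemma \ref{eq:phicdga}) together with the identity $\Omega_{\rm s}(X)=\Omega_{\rm s}(Y)^{\Phi}$, which holds for non-free simplicial actions as well. The paper obtains the equivariant formality step by citing \cite[Corollary 2.9]{Pap} and the remark following it, which rest on the Halperin--Stasheff theory \cite{HSt}; you instead try to prove it directly, and that is exactly where your argument is thin. The issue you should be explicit about is that semisimplicity of $\k[\Phi]$ only lets you split short exact sequences of $\Phi$-modules; it gives you the equivariant minimal model $(\bwedge V,d)\to\Omega_{\rm s}(Y)$ and an equivariant bigraded model of the algebra $H^{\hdot}(Y)$, but it does not by itself produce a $\Phi$-stable lower grading on $(\bwedge V,d)$, nor an equivariant formality quasi-isomorphism. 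The existence of such a lower grading is \emph{equivalent} to formality, and one cannot pass from a non-equivariant formalizing $\cdga$ map to an equivariant one by averaging, since the average of algebra maps is not an algebra map. The correct route is through the Halperin--Stasheff filtered model and its obstruction theory (or the homotopy-uniqueness of formalizing isomorphisms, which form a torsor under a pro-unipotent group on which the finite group $\Phi$ must have a fixed point in characteristic $0$); this is precisely the content of the cited result. So: same architecture as the paper, correct modulo this one input, but your justification of that input as stated would not survive scrutiny --- either cite \cite{Pap}/\cite{HSt} as the paper does, or carry out the equivariant obstruction argument in detail.
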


\begin{proof}
We deduce from \cite[Corollary 2.9]{Pap} and the remark following it that 
the space $Y$ is formal if and only if $\Omega (Y) \simeq (H^{\hdot}(Y), d=0)$ 
in $\Phi$-$\cdga$. This equivalence 
is based on the Halperin--Stasheff approach to formality from \cite{HSt}. 

Using the natural equivalence $\Omega \simeq \Omega_{{\rm s}}$, we 
now infer from Lemma \ref{eq:phicdga} that 
$\Omega_{{\rm s}} (Y)^{\Phi} \simeq (H^{\hdot}(Y)^{\Phi}, d=0)$ in $\cdga$. 
Observing that equality (\ref{eq:omphi}) holds in this broader setup 
(where the $\Phi$-action is not necessarily free), we conclude that 
$\Omega_{{\rm s}} (X) \simeq (H^{\hdot}(X), d=0)$ in $\cdga$. 
Since $\Omega(X) \simeq \Omega_{{\rm s}}(X)$, we are done.
\end{proof}

\section{Malcev Lie algebras, derived series, and Hall--Reutenauer bases}
\label{sect:meta}

We devote this section to a proof of Theorem \ref{thm:meta}. 

\subsection{Malcev Lie algebras}
\label{subsec:malcev}

We start by recalling some notions from  \cite[Appendix~A]{Qu69}; 
see also \cite{PS04,FHT,SW}. 
A {\em Malcev Lie algebra}\/ is a Lie algebra $\m$ over a field $\k$ 
of characteristic $0$,
endowed with a decreasing, complete $\k$-vector space 
filtration $F=\{F_i\}_{i\ge 1}$ such that $F_1=\m$ and   
$[F_i,F_j]\subset F_{i+j}$, for all $i, j$, and with the property that the 
associated graded Lie algebra, $\gr(\m)=\bigoplus _{i\ge 1}
F_i/F_{i+1}$, is generated in degree~$1$.

For example, the completion $\widehat{L}$ of a Lie algebra $L$ with 
respect to the lower central series ($\lcs$) filtration 
$\Gamma=\{\Gamma_i (L)\}_{i\ge 1}$,  
endowed with the canonical completion filtration, is a 
Malcev Lie algebra. (Note that the $\lcs$ filtration on $L$ 
coincides with the degree filtration, in the case when $L$ is positively graded and
the degree $1$ component $L^1$ generates $L$ as a Lie algebra.)

In \cite{Qu69}, Quillen associates to every group $\pi$, in a functorial 
way, a Malcev Lie algebra, denoted by $\m (\pi)$.  This object, 
called the {\em Malcev completion}\/ of $\pi$, captures 
the properties of the torsion-free nilpotent quotients of $\pi$. 
Here is a concrete way to describe it. 
The group algebra $\k{\pi}$ has a natural Hopf algebra structure,
with comultiplication given by $\Delta(g)=g\otimes g$, and
counit the augmentation map. Let $I$ be the augmentation ideal of $\k{\pi}$. 
One verifies that the Hopf algebra structure on $\k{\pi}$ extends 
to the $I$-adic completion, $\widehat{\k{\pi}}=\varprojlim_r \k{\pi}/I^r$.  
Finally,  $\m (\pi)$ coincides with the Lie algebra of primitive 
elements in $\widehat{\k{\pi}}$, endowed with the inverse limit 
filtration.

For a finitely generated group $\pi$, there is a natural duality between 
$\m (\pi)$ and $\M_1(\pi)$ \cite{Su77}:
the $\cdga$ $\M_1(\pi)$ is the inductive limit $\varinjlim_{i} \cC (\m/F_i)$, 
where $\cC$ is the Chevalley--Eilenberg $\cdga$ cochain functor 
applied to finite-dimensional Lie algebras \cite{CE}.

\subsection{Filtered formality and finiteness properties}
\label{subsec:fffp}
We recall from \cite{SW} that a group $\pi$ is said to be {\em filtered formal}\/ if 
$\m (\pi) \cong \widehat{L}$ as filtered Lie algebras, where $L$ is a graded 
Lie algebra generated in degree $1$, of the form  $L=\L/J$, with  
$\L$ a finitely generated, free, graded Lie algebra and $J\subseteq \L^{\ge 2}$ 
a graded Lie ideal generated in degrees $2$ and higher.

We denote by $(\cdot)^{(k)}$ the $k$-th term of the derived series 
of groups and Lie algebras.   It follows from \cite[Theorem 3.5]{PS04} 
and \cite[Theorem 1.7]{SW} that the groups $\FF_n/\FF_n^{(k)}$ are 
filtered formal, with corresponding Lie algebras $L= \L_n/\L_n^{(k)}$, 
where $\L_n$ denotes the free Lie algebra on $n$ generators and 
$\FF_n$ is the free group on $n$ generators. 

Here is our first result tying certain finiteness 
properties of algebraic objects associated to a 
group $\pi$, under a filtered formality assumption. 

\begin{prop}
\label{prop:filtf}
Let $\pi$ be a finitely generated, filtered formal group, so that 
$\m (\pi) \cong \widehat{L}$, where $L=\L/J$ is a graded Lie algebra 
generated in degree $1$ and $J\subseteq \L^{\ge 2}$.  
If  $b_2(\M_1(\pi))< \infty$, then $\dim (J/[\L, J] )< \infty$.
\end{prop}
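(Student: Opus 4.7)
The plan is to show that, under the filtered formality hypothesis, $b_2(\M_1(\pi)) = \dim(J/[\L, J])$, from which the claim follows at once.

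First, I would use filtered formality to translate $\M_1(\pi)$ into an object built purely from the graded Lie algebra $L$. Since $\m(\pi) \cong \widehat{L}$ and $L$ is positively graded with $L^1$ generating, the Malcev filtration $\{F_k\}$ on $\widehat{L}$ agrees with the degree filtration, so $\widehat{L}/F_k \cong L/L^{\geq k}$ as a finite-dimensional nilpotent Lie algebra. The description recalled in \S\ref{subsec:malcev} then gives
\[
\M_1(\pi) = \varinjlim_k \cC(\widehat{L}/F_k) = \varinjlim_k \cC(L/L^{\geq k}),
\]
with structure maps the $\cdga$ injections dual to the truncation surjections $L/L^{\geq k+1} \twoheadrightarrow L/L^{\geq k}$.

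Next, I would compute $H^2$ via the Hopf formula for Lie algebra homology. Since cohomology commutes with filtered colimits, $H^2(\M_1(\pi)) = \varinjlim_k H^2(L/L^{\geq k};\k)$. Applied to the canonical presentation $\L \twoheadrightarrow L/L^{\geq k}$ with kernel $J + \L^{\geq k}$, and using $J \subseteq [\L,\L] = \L^{\geq 2}$, the Hopf formula yields
\[
H_2(L/L^{\geq k}; \k) = (J + \L^{\geq k}) \big/ ([\L, J] + \L^{\geq k+1}).
\]
In graded degree $m < k$ this reduces to $(J/[\L,J])^m$, and the transition maps act as the identity on this piece once $k > m$. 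Hence the direct limit stabilizes degree by degree, and I obtain a graded isomorphism $H^2(\M_1(\pi)) \cong \bigoplus_m \big((J/[\L,J])^m\big)^*$.

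Finally, because $\L$ is finitely generated in degree $1$, each $\L^m$, and hence each quotient $(J/[\L,J])^m$, is finite-dimensional. Thus $\dim(J/[\L,J]) = \sum_m \dim(J/[\L,J])^m = b_2(\M_1(\pi))$, and the hypothesis $b_2(\M_1(\pi))<\infty$ forces $\dim(J/[\L, J]) < \infty$. The main obstacle is the second step: identifying $H^2(\M_1(\pi))$ with the graded dual of $J/[\L,J]$ requires careful bookkeeping along the truncations $L/L^{\geq k+1} \twoheadrightarrow L/L^{\geq k}$, ensuring that on each graded piece the direct system stabilizes at a finite stage and that the Hopf formula map is the obvious one. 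Once this identification is in place, the remaining steps are routine.
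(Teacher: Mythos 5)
Your proposal is correct and follows essentially the same route as the paper: the duality $\M_1(\pi)=\varinjlim_i \cC(\m/F_i)$, filtered formality to identify $\m/F_i$ with the truncations $L/\Gamma_i(L)=L/L^{\geq i}$, the Hopf formula for $H_2$ of these nilpotent quotients, and a degree-by-degree stabilization argument. The only (harmless) difference is that you compute the limit exactly and obtain the equality $b_2(\M_1(\pi))=\dim(J/[\L,J])$, whereas the paper settles for the injectivity of the graded map $H_2(L)=J/[\L,J]\to \varprojlim_i H_2(\m/F_i)$, which already suffices.
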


\begin{proof}
By the aforementioned duality between $\m (\pi)$ and $\M_1(\pi)$, 
the following holds: $b_k(\M_1(\pi))< \infty$ if and only 
if the inverse limit $\varprojlim_i H_k (\m/F_i)$ is a finite-dimensional 
$\k$-vector space, 
where $H_k(-)$ stands for Lie algebra homology, and $F=\{F_i\}_{i\ge 1}$ 
is the canonical inverse limit filtration on $\m=\m(\pi)$.

Our filtered formality assumption on the group 
$\pi$ yields Lie algebra isomorphisms 
\begin{equation}
\label{eq:mfi}
\m/F_i \cong \L/(\L^{\ge i} +J),
\end{equation}
for all $i\ge 1$. Moreover, the isomorphism $\widehat{L}\cong \m$ 
induces identifications $L/\Gamma_i(L) \cong \m/F_i$, which yield 
compatible Lie algebra maps $L\to \m/F_i$. Passing to homology, 
we obtain a natural homomorphism to the inverse limit, 
\begin{equation}
\label{eq:varphi}
\xymatrixcolsep{20pt}
\xymatrix{\varphi\colon H_{\bullet}(L) \ar[r]& \varprojlim_i H_{\bullet} (\m/F_i)}.
\end{equation}

Let us focus now on degree $\bullet=2$, where a well-known formula of H.~Hopf 
(see \cite{HS}) gives identifications  
\begin{equation}
\label{eq:hopf}
H_2(L)= J/[\L, J] \quad \text{and} \quad 
H_2 (\m/F_i)= (\L^{\ge i} +J)/(\L^{\ge i+1} +[\L,J]).
\end{equation}
Since all vector spaces in sight are graded and the map $\varphi$ 
respects degrees, we infer that the map $\varphi_2$ is injective. 
Putting things together completes the proof.
\end{proof}

\subsection{Hall--Reutenauer bases}
\label{subsec:grobner}

Once again, let $\L_n$ be the free Lie algebra on the 
(totally ordered) alphabet $\{ 1, \dots, n\}$, and let 
$\L_n''$ be its second derived Lie subalgebra. 
We analyze below a certain quotient of $\L_n''$; 
the {\em Hall--Reutenauer bases} constructed in 
\cite[Theorem 5.7, p.~112]{R} will be particularly 
well-adapted to our purposes.

\begin{prop}
\label{prop:freemeta}
For $n\ge 2$, the graded vector space $\L_n''/[\L_n, \L_n'']$ is infinite-dimensional.
\end{prop}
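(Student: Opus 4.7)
My plan is to reduce to $n = 2$, interpret the quotient as second Lie-algebra homology, and then use the derived-series-adapted Hall--Reutenauer basis to display an explicit infinite family.

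The reduction to $n = 2$ is immediate: the inclusion $\L_2 \inj \L_n$ sending the two free generators of $\L_2$ to $x_1, x_2$ admits the obvious retraction $\L_n \surj \L_2$ killing $x_3, \dots, x_n$; both are graded Lie algebra maps, hence respect the derived series and the ideals $[\L_\bullet, \L_\bullet'']$, yielding a split injection $\L_2''/[\L_2, \L_2''] \inj \L_n''/[\L_n, \L_n'']$.  Next, since $\L_2$ is free and $\L_2'' \subseteq \L_2'$, Hopf's formula identifies
\begin{equation*}
\L_2''/[\L_2, \L_2''] \;\cong\; H_2(M_2),
\end{equation*}
where $M_2 := \L_2/\L_2''$ is the free metabelian Lie algebra on two generators.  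Moreover, since $\L_2''$ is an ideal and $\L_2$ is generated in degree one, Jacobi gives $[\L_2, \L_2''] = [\L_2^1, \L_2'']$, reducing the computation in each Lie degree $k$ to a cokernel $(\L_2'')^k / [\L_2^1, (\L_2'')^{k-1}]$, whose first factor is controlled exactly by the Witt formula.

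For the explicit construction I would invoke the Hall basis $\HH$ of $\L_2$ adapted to the derived series as in \cite[Theorem 5.7]{R}, under which $\HH \cap \L_2''$ forms a basis of $\L_2''$ whose elements factor canonically as $[u, v]$ with $u < v$ in the Hall order and $u, v \in \HH \cap \L_2'$.  The argument then becomes a degree-by-degree leading-term analysis: for a carefully chosen sequence $\omega_k \in \HH \cap \L_2''$ of pairwise distinct Lie degrees, one expands each $[x_i, \eta]$ with $\eta \in \HH \cap \L_2''$ back into the Hall basis via Jacobi, and uses the Reutenauer structure to constrain the coefficients so that $\omega_k$ never occurs in any such expansion.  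The main obstacle is precisely this combinatorial bookkeeping: naive candidates such as $\omega_k = [[x_1, x_2],\, \ad(x_1)^k [x_1, x_2]]$ actually lie in $[\L_2, \L_2'']$ in many degrees (for instance, in Lie degree $6$ one checks directly that $[\L_2^1, (\L_2'')^5] = (\L_2'')^6$), so the family of witnesses must be chosen with genuine care using the Hall order.  A robust backup, should the direct Reutenauer argument prove delicate, is to apply the Hochschild--Serre spectral sequence to the abelian extension $0 \to M_2' \to M_2 \to \k^2 \to 0$ (with $M_2' \cong \k[x_1, x_2]$ free cyclic), reducing the infinite-dimensionality of $H_2(M_2)$ to the infinite-dimensionality of the Koszul-style coinvariants $E_2^{0,2} = (\Lambda^2 M_2')_{\k^2}$ modulo the controllable contribution of the two short differentials $d_2, d_3$.
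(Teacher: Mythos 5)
Your reductions are sound and coincide with the paper's: the split injection $\L_2''/[\L_2,\L_2''] \inj \L_n''/[\L_n,\L_n'']$, and the identity $[\L_2,\L_2'']=[\L_2^1,\L_2'']$ (which the paper proves by induction on degree). But the heart of the proposition --- actually producing infinitely many nonzero classes --- is never executed. Your primary route ends with the admission that the witnesses ``must be chosen with genuine care using the Hall order,'' right after observing that the naive candidates fail; that is a restatement of the difficulty, not a resolution of it. Your backup route likewise stops at ``modulo the controllable contribution of the two short differentials'' without computing the coinvariants $(\Lambda^2 M_2')_{\k^2}$ or checking those differentials. (The backup is in fact closer to viable than you indicate: since $M_2'\cong\k[s,t]$ is a free module over $U(\k^2)$, one gets $H_p(\k^2,M_2')=0$ for $p\ge 1$, and $\Lambda^3\k^2=0$, so both differentials into the $(0,2)$ spot vanish and $H_2(M_2)$ contains the coinvariants on the nose --- but the infinite-dimensionality of those coinvariants is exactly the kind of computation you have deferred.)

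The paper sidesteps explicit witnesses entirely by a counting argument in the bigrading of $\L=\L(x,y)$ by $x$-degree and $y$-degree. The Hall--Reutenauer basis shows that every element of $\L''$ has $x$-degree at least $2$; consequently the bidegree-$(2,i)$ component of $[\{x,y\},\L'']$ is exactly $\ad_y(\L''_{2,i-1})$, so vanishing of the quotient in total degree $i+2$ would force $\dim\L''_{2,i}\le\dim\L''_{2,i-1}$. An explicit basis of $\L''_{2,i}$ (the elements $[[xy^{p+1}],[xy^{q+1}]]$ with $0\le p<q$ and $p+q=i-2$) gives $\dim\L''_{2,2k}=k-1$ and $\dim\L''_{2,2k+1}=k$, so the dimension jumps up at each odd $i$ and the quotient must be nonzero in every odd degree at least $5$. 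If you want to rescue your leading-term strategy, this is the missing idea: restrict to the minimal $x$-degree stratum, where bracketing with the generators collapses to the single operator $\ad_y$, whose source is strictly smaller than its target for purely dimensional reasons.
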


\begin{proof}
Given elements $h_1, \dots, h_k \in \L_n$, 
we denote by $[h_1, \dots, h_k] \in \L_n$ the iterated bracket
$[\dots [[h_1, h_2],  \dots, h_k]$. We set $\HH_0:= \{ 1, \dots, n\}$, 
and define inductively 
\begin{equation}
\label{eq:defhr}
\HH_i:= \{ [h_1, \dots, h_k] \}\, ,
\end{equation}
where $k \ge 2$, $h_1, \dots, h_k \in \HH_{i-1}$, and $h_1<h_2\ge \dots \ge h_k$. 
We pick a total order on the trees indexing the elements 
of $\HH_i$, compatible with degrees in $\L_n$ (that is, $h<h'$ if $\deg (h)<\deg(h')$), 
and decree that $h>h'$, for $h \in \HH_{i-1}$ and $h' \in \HH_{i}$.
As shown in \cite{R}, the Lie monomials from $\bigcup_{i\ge \ell} \HH_i$,  
form a basis for the vector space $\L_n^{(\ell)}$, for all $\ell$. 

Let $\L=\L_{\A}$ be the free Lie algebra on the alphabet $\A$, 
and $J\subseteq \L$ an ideal. A straightforward induction 
on degree establishes an equality of vector spaces, 
$[\L, J]=[\A, J]$. By a split surjectivity argument, it 
will be enough to check that
\begin{equation}
\label{eq:n2}
\dim \frac{\L''}{[\A, \L'']}= \infty \, ,
\end{equation}
where $\A= \{ x,y\}$ with $x<y$. Note that the Lie algebra $\L = \bigoplus_{i,j} \L_{i, j}$ 
is naturally bigraded, where $i$ is the $x$-degree and $j$ is the $y$-degree.

Clearly, $\HH_0= \{ x,y\}$. It follows from (\ref{eq:defhr}) that 
\begin{equation}
\label{eq:defh1}
\HH_1= \{ [xy^px^q]  \mid p\ge 1,\,  q\ge 0,\, p+q\ge 1  \}\, ,
\end{equation}
where $[xy^px^q]$ denotes $[x, \overbrace{y, \dots, y}^p, \overbrace{x, \dots, x}^q]$. 
In particular, $x$-$\deg (h)\ge 1$, for all $h\in \HH_1$.
Induction on $i\ge 1$ based on definition (\ref{eq:defhr}) shows that 
\begin{equation}
\label{eq:xdeg}
\text{$x$-$\deg (h) \ge 2^{i-1}$,\: for all $h\in \HH_i$} \, .
\end{equation}
In particular, all elements of the Hall--Reutenauer basis of $\L''$ 
have $x$-degree  $\ge 2$.

Consequently, if the vector space $\L''/[\A, \L'']$ is trivial in degree $i+2$, 
then necessarily $\ad_y$ induces a surjection $\L''_{2, i-1}\surj \L''_{2,i}$. Hence,
\begin{equation}
\label{eq:dims}
\dim \L''_{2,i} \le \dim \L''_{2, i-1}\, .
\end{equation}

Next, we compute dimensions in (\ref{eq:dims}). We infer 
from (\ref{eq:xdeg}), together with (\ref{eq:defhr}) 
and (\ref{eq:defh1}), that the following elements
form a basis of $\L''_{2,i}$: 
\begin{equation}
\label{eq:x2basis}
\{ [[xy^{p+1}], [xy^{q+1}]] \mid \text{$0\le p<q$ and $p+q=i -2$} \}\, .
\end{equation}
Therefore,
\begin{equation}
\label{eq:parity}
\dim \L''_{2,i}=
\begin{cases}
k & \text{if $i=2k+1$,}\\
k-1& \text{if $i=2k$.}
\end{cases}
\end{equation}

Consequently, the graded vector space $\L''/[\A, \L'']$ must 
be non-zero in all odd degrees $5$ and higher, since otherwise (\ref{eq:parity}) 
would contradict \eqref{eq:dims}. This verifies claim (\ref{eq:n2}), thereby 
completing our proof.
\end{proof}

\subsection{Proof of Theorem \ref{thm:meta}}
\label{subsec:pfmeta}

Let $\pi$ be a finitely generated group.  By assumption, there is an  
epimorphism $\varphi\colon \pi\surj \FF_n$, for some $n\ge 2$.  
Since the group $\FF_n$ is free, the map $\varphi$ admits a 
splitting. Hence, the induced homomorphism, 
$\bar\varphi\colon \pi/\pi'' \surj \FF_n/\FF_n''$, 
is again a split epimorphism.  By the homotopy functoriality 
of the $1$-minimal model construction, the map $\bar\varphi$ 
induces a $\cdga$ map, 
\begin{equation}
\label{eq:phistar}
\xymatrixcolsep{20pt}
\xymatrix{\bar\varphi^*\colon \M_1(\FF_n/\FF_n'') \ar[r]& \M_1(\pi/\pi'')},
\end{equation}
which is a split injection up to homotopy.

Suppose now that the conclusion of the theorem does not hold, i.e., 
suppose that the group $\pi/\pi''$ is  finitely presentable, or that it 
admits a $1$-finite $1$-model. It then follows from Corollary \ref{cor:infobs1} 
that $b_2(\M_1(\pi/\pi''))< \infty$.  Since, as we saw above, the map 
$\bar\varphi^*$ is split injective (up to homotopy), and since homology 
is a homotopy functor, we conclude that 
\begin{equation}
\label{eq:b2fn}
b_2(\M_1(\FF_n/\FF_n''))< \infty.
\end{equation}

Now, since the group $\FF_n/\FF_n''$ is filtered formal 
and (\ref{eq:b2fn}) holds, Proposition \ref{prop:filtf} implies that 
$\dim ( \L_n''/[\L_n, \L_n''] ) <\infty$. 
But this contradicts Proposition \ref{prop:freemeta}, and so we are done. 
\hfill $\Box$
 
\section{Cohomology jump loci, finiteness properties, and largeness}
\label{sect:cjl large}

\subsection{Cohomology jump loci}
\label{subsec:cjl}

Let $X$ be a path-connected space with fundamental group 
$\pi=\pi_1(X)$. The cohomology jump loci with coefficients in 
rank $1$ complex local systems on $X$ are powerful 
homotopy-type invariants of the space, that have been the 
subject of intense investigation in recent years, see 
for instance \cite{DPS-duke,PS-plms,DP-ccm,BW}.   
These loci sit inside the complex algebraic group 
$\widehat{\pi}:= \Hom(\pi, \C^{\times})$, 
and are defined for all $i,r\ge 0$ by
\begin{equation}
\label{eq:defjump}
\VV^i_r(X) =\{ \rho \in \widehat{\pi} \mid 
\dim H^i(X, \C_{\rho} )\ge r\}\, ,
\end{equation}
where $\C_{\rho}$ is the rank $1$ local system on $X$ associated to 
a representation $\rho\colon \pi\to \C^{\times}$, i.e., the vector 
space $\C$ viewed as a module over the group algebra $\C[\pi]$ 
via the action $g\cdot a=\rho(g) a$, for $g\in \pi$ and $a\in \C$. 
When the space 
$X$ is $q$-finite, the {\em characteristic varieties}\/ $\VV^i_r(X)$ 
are Zariski closed subsets of the character group $\widehat{\pi}$, 
for all $i\le q$ and $r\ge 0$, see \cite{PS-mrl}. It is easily seen that the 
the sets $\VV^i_r(X)$ with $i\le 1$ and $r\ge 0$ depend 
only on the group $\pi=\pi_1(X)$. 

Now let $\pi$ be a finitely generated group, and define 
$\VV^i_r(\pi):=\VV^i_r(K(\pi,1))$ for $i,r\ge 0$.  It is known that the sets 
$\VV^i_r(\pi)$ with $i\le 1$ and $r\ge 0$ depend only on 
the maximal metabelian quotient $\pi/\pi''$ (see e.g.~\cite{DPS-imrn,KS}); 
more precisely, the following equality holds,
\begin{equation}
\label{eq:jumpmeta}
\VV^i_r(\pi)=\VV^i_r(\pi/\pi'').
\end{equation}

The characteristic varieties have several useful naturality properties.  
For instance, suppose $\varphi\colon \pi\surj G$ is an epimorphism. 
Then the induced morphism on character groups, $\varphi^*\colon 
\widehat{G}\to \widehat{\pi}$, is injective and sends $\VV^1_r(G)$ 
into $\VV^1_r(\pi)$  for all  $r\ge 0$.  
Likewise, suppose that $H<\pi$ is a finite-index subgroup.  Then, 
as noted in \cite[Lemma 3.6]{PP}, the inclusion $\alpha\colon H\to \pi$ induces a 
morphism $\hat{\alpha}\colon \widehat{\pi}\to \widehat{H}$ with finite 
kernel, which sends $\VV^i_r(\pi)$ to $\VV^i_r(H)$ for all  $i,r\ge 0$.  

For the free groups $\FF_n$ of rank $n\ge 2$, we have that 
$\VV^1_r(\FF_n)=(\C^{\times})^n$ for $r\le n-1$ and $\VV^1_n(\FF_n)=\{1\}$. 
In general, though, the jump loci of a group can be arbitrarily complicated.  

\begin{example}
\label{ex:syz}
Let $f\in \Z[t_1^{\pm 1},\dots , t_n^{\pm 1}]$ 
be an integral Laurent polynomial with $f(1)=0$.   
Then, as shown in \cite{SYZ}, there is a finitely presented group $\pi$ 
with $\pi_{\ab}=\Z^n$ such that $\VV^1_1(\pi)$ coincides with 
the variety $\mathbf{V}(f):=\{t \in (\C^{\times})^n \mid f(t)=0\}$.  
\end{example}

\subsection{A finiteness obstruction}
\label{subsec:infvsBW}

Let $A$ be a connected $\cdga$.  Clearly, $H^1(A)=Z^1(A)$. 
For every $\omega \in H^1(A)$, the operator $d_{\omega}:= d+ \omega \, \cdot$ 
is a differential on $A^{\hdot}$. The {\em resonance varieties}\/ $\RR^i_r(A)$
are defined, for all $i,r\ge 0$, as the infinitesimal jump loci
\begin{equation}
\label{eq:defres}
\RR^i_r(A) =\{ \omega \in H^1(A) \mid 
\dim H^i(A, d_{\omega} )\ge r\}\, .
\end{equation}
When the $\cdga$ $A$ is $q$-finite, these sets are Zariski closed subsets of 
the affine space $H^1(A)$, for all $i\le q$ and $r\ge 0$, 
The following key result is a particular case of  \cite[Theorem B]{DP-ccm}. 

\begin{theorem}[\cite{DP-ccm}]
\label{thm:thmb}
Let $X$ be a $q$-finite space which admits a $q$-finite $q$-model $A$.
There is then a local analytic isomorphism between the germ at $1$ of 
$\pi_1(X)^{\what}$ and the germ at $0$ of $H^1(A)$, 
that identifies the germ at $1$ of $\VV^i_r(X)$ 
with the germ at $0$ of $\RR^i_r(A)$, for all $i\le q$ and $r\ge 0$. 
\end{theorem}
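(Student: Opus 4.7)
The plan is to prove this via the standard tangent cone / Aomoto-complex approach, using the $q$-finite $q$-model $A$ as a computational substitute for $\Omega(X)$ near the trivial local system. First I would construct the local analytic map. Since $A\simeq_q \Omega(X)$ with $q\ge 1$, there is a canonical identification $H^1(A)\cong H^1(X,\C) = \Hom(\pi,\C)$, which is the tangent space at $1\in \widehat{\pi}$. The exponential map $\exp\colon H^1(A)\to \widehat{\pi}$, $\omega\mapsto \exp(\omega)$, is a local analytic isomorphism at the origin, and will be the germ isomorphism of the theorem.

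Next I would construct a universal Aomoto complex. Let $\wR$ be the completed local ring of $H^1(A)$ at $0$ (or a suitable analytic version), and let $\widehat{\omega}\in A^1\widehat{\otimes}\wR$ be the tautological class. Since $A^0=\k$, for any $\omega\in H^1(A)$ one has $d\omega=0$ and $\omega\cdot\omega=0$, so $d_{\widehat{\omega}}:=d+\widehat{\omega}\cdot$ is a square-zero deformation of $d$ on $A\widehat{\otimes}\wR$. By the Nakayama-type rank-jump criterion, the germ of $\RR^i_r(A)$ at $0$ is cut out by the vanishing of suitable minors in the cohomology of $(A\widehat{\otimes}\wR,d_{\widehat{\omega}})$. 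The heart of the argument is to show that this universal Aomoto complex computes, in a family sense, the cohomology $H^i(X,\C_\rho)$ for $\rho=\exp(\omega)$ with $\omega$ near $0$, in degrees $i\le q$, with a correct bound in degree $q+1$.

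The key step is the comparison. Standard rational-homotopy deformation theory (going back to Deligne--Griffiths--Morgan--Sullivan and Goldman--Millson, and formalized by Dimca--Papadima in this setting) asserts that the germ of the moduli of flat rank-one connections, together with its cohomology jump stratification, is controlled by any $q$-finite $q$-model $A$, provided the local system cohomology is computed by an appropriate twisted de Rham complex on $A$. The rank-one case is abelian, so the Maurer--Cartan equation on $A$ is simply $d\omega=0$, which gives the entire first cohomology $H^1(A)$ as the formal deformation space, matching the tangent level with the character variety via $\exp$. A zig-zag of $q$-equivalences $A\simeq_q \Omega(X)$ induces a compatible zig-zag of twisted complexes after completion in $\widehat{R}$, and the $q$-finiteness of $A$ (together with Lemma \ref{lem:morefin}) ensures that the relevant minors are honest analytic functions on a neighborhood of $0$, not merely formal power series.

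The principal obstacle is exactly this last point: upgrading a formal (power-series level) isomorphism of germs to an honest local analytic isomorphism. The passage from $\Omega(X)$ to $A$ is only through a zig-zag of quasi-isomorphisms of infinite-dimensional \cdga{}s, and while this transports the formal deformation functor faithfully, one needs convergence to obtain analytic germs of the characteristic varieties. This is handled by replacing $A$ with the finite $\cdga$ $A[q]$ of Lemma \ref{lem:morefin}, so that the twisted cohomology in degrees $\le q$ becomes the cohomology of a complex of finitely generated free $\wR$-modules; the fitting ideals of its differentials are then finitely generated and their vanishing loci are genuine analytic germs. Combining this with the exponential parametrization and the fact that both $\VV^i_r(X)$ and $\RR^i_r(A)$ are defined by the same determinantal conditions on cohomologous complexes finishes the argument.
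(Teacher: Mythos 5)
First, note that the paper does not prove this statement at all: it is quoted verbatim as a particular case of Theorem B of Dimca--Papadima \cite{DP-ccm}, so there is no in-paper proof to compare against. Judged on its own terms, your outline correctly reproduces the architecture of the Dimca--Papadima argument: the exponential map identifying the germ of $H^1(X,\C)\cong H^1(A)$ at $0$ with the germ of $\pi_1(X)^{\what}$ at $1$; the universal twisted (Aomoto) complex $(A\widehat{\otimes}\widehat{R}, d+\widehat{\omega})$ over the completed local ring; the description of the jump loci by determinantal (Fitting) ideals; and the reduction to the finite model $A[q]$ to get honest analytic, rather than merely formal, germs.

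The genuine gap is that your ``key step'' paragraph is circular: you invoke ``standard rational-homotopy deformation theory\dots formalized by Dimca--Papadima in this setting'' to assert that the jump stratification of the character variety near $1$ is controlled by any $q$-finite $q$-model. That assertion \emph{is} the theorem. Two specific comparisons are left unargued. (i) A $q$-equivalence $\varphi\colon A_1\to A_2$ is only an untwisted $q$-equivalence; to conclude that it identifies the cohomology jump ideals of the completed twisted complexes in degrees $\le q$ one needs the $\mathfrak{m}$-adic filtration spectral sequence over Artinian quotients of $\widehat{R}$, together with the fact that cohomology jump ideals of bounded complexes of finitely generated modules are invariant under quasi-isomorphism (the Budur--Wang formalism); moreover, the monomorphism condition in degree $q+1$ is exactly what is needed to control the Fitting ideal in degree $q$, since that ideal involves the differential into degree $q+1$, and this should be said explicitly. (ii) On the topological side, you need a twisted de Rham comparison showing that $H^i(X,\C_{\exp(\omega)})$ is computed, in a family near $\omega=0$, by the universal twisted Sullivan complex on $\Omega(X)$; this is where Goldman--Millson-type deformation theory and the $q$-finiteness of $X$ itself (not just of $A$) actually enter, and it is not a formal consequence of the zig-zag. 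As written, the proposal identifies where the difficulty sits but substitutes a citation of the result for its proof.
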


Recent work of Budur and Wang \cite{BW17}, building on the 
aforementioned theorem, provides a very strong 
finiteness obstruction for spaces, based on the geometry 
of their characteristic varieties. 

\begin{theorem} 
\label{thm:jumpobs}
If $X$ is $q$-finite and has a $q$-finite $q$-model, then each 
irreducible component of $\VV^i_r(X)$ passing through the 
identity $1\in \widehat{\pi}$ is an algebraic subtorus 
of the character group of $\pi=\pi_1(X)$, for all $i\le q$ and $r\ge 0$.
\end{theorem}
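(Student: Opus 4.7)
The plan is to transport the statement through the local germ isomorphism of Theorem \ref{thm:thmb}, replacing the characteristic varieties by the resonance varieties of the $q$-finite $q$-model $A$, and then to leverage the combination of local conic structure and global $\Z$-algebraicity to conclude that the irreducible components through the identity are algebraic subtori.

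First, I would apply Theorem \ref{thm:thmb} to the given data ($X$ is $q$-finite and admits a $q$-finite $q$-model $A$), obtaining a local analytic isomorphism of germs
\[
(\widehat{\pi}, 1)\;\xrightarrow{\;\cong\;}\;(H^1(A), 0)
\]
that matches, for each $i\le q$ and $r\ge 0$, the germ of $\VV^i_r(X)$ at $1$ with the germ of $\RR^i_r(A)$ at $0$. Consequently, for every irreducible component $W$ of $\VV^i_r(X)$ containing $1$, the germ $(W,1)$ is identified with an irreducible analytic germ at $0$ sitting inside some irreducible component of $\RR^i_r(A)$ through the origin.

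Second, I would record that $\RR^i_r(A)\subseteq H^1(A)$ is a conic algebraic subvariety; this is a standard consequence of the naturality of the twisted complex $(A, d_\omega)$ under scaling of $\omega$. In particular, each irreducible component of $\RR^i_r(A)$ passing through the origin is a homogeneous algebraic cone with vertex at $0$, and so the log-germ of $(W,1)$, transported through the above isomorphism, is the germ of such a cone at the origin of $H^1(A)$.

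The final and most delicate step is to upgrade this local conic picture to the global claim that $W$ is an algebraic subtorus of $\widehat{\pi}$. For this one appeals to the structural theorem of Budur--Wang \cite{BW17}: an irreducible closed algebraic subvariety of the algebraic torus $\widehat{\pi}$, defined over $\Z$, that contains $1$ and whose log-germ at $1$ is analytically identified with the germ of a homogeneous algebraic cone in $H^1(\pi_{\ab},\C)$, must in fact be an algebraic subtorus. The subtlety here---and the main obstacle of the proof---is that a conic germ is not a priori a linear subspace germ; the passage from conic to linear (and hence to subtorus) relies crucially on the $\Z$-definability of the characteristic varieties together with a rigidity argument forcing the tangent directions at $1$ to be rationally spanned, so that their exponential image is a genuine subtorus rather than an arbitrary analytic conic subvariety. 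This step is the heart of the Budur--Wang argument and is what transforms the infinitesimal conic information coming from Theorem \ref{thm:thmb} into the sharp geometric conclusion stated in the theorem.
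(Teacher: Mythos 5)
Your overall strategy is the one the paper intends: Theorem~\ref{thm:thmb} identifies the germ at $1$ of $\VV^i_r(X)$ with the germ at $0$ of $\RR^i_r(A)$, and the global conclusion is then extracted from the rigidity result \cite[Corollary 2.2]{BW17}; indeed the paper's ``proof'' is exactly this two-reference derivation. However, your Step 2 contains a genuine error. For a \cdga{} with $d\ne 0$, the complexes $(A,d_{\omega})$ and $(A,d_{t\omega})$ are in general \emph{not} isomorphic for $t\ne 0$: the grading automorphism $a\mapsto t^{\deg a}a$ conjugates $\omega\,\cdot$ into $t\omega\,\cdot$ but does not rescale $d$ compatibly, so the scaling argument only works when $d=0$, or when $A$ carries positive weights (which is exactly why the paper restricts the positive-weight assertion of \S\ref{subsec:reslarge} to Orlik--Solomon models). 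Concretely, let $A=\bwedge(a_1,a_2)$ with $\deg a_i=1$, $da_1=0$, $da_2=a_1\wedge a_2$ (the Chevalley--Eilenberg complex of the $2$-dimensional non-abelian Lie algebra; such \cdga s occur inside finite models of solvmanifolds). Then $H^1(A)=\C\cdot [a_1]$, and for $\omega=s\,a_1$ one computes $\dim H^1(A,d_{\omega})=1$ exactly for $s\in\{0,-1\}$, so $\RR^1_1(A)=\{0,-1\}$ is not a cone and resonance is not scaling-invariant. Asserting instead only that the components of $\RR^i_r(A)$ \emph{through the origin} are cones would be circular here, since their linearity near $0$ is essentially equivalent to the conclusion being proved.

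The gap matters as written because your final step invokes a version of the Budur--Wang rigidity whose hypothesis is a \emph{cone} germ. The actual statement of \cite[Corollary 2.2]{BW17} needs no homogeneity: it is an application of the exponential Ax--Lindemann theorem, and requires only that the (exponential-type) local isomorphism of Theorem~\ref{thm:thmb} carry the germ at $1$ of the Zariski-closed, rationally defined set $\VV^i_r(X)$ (closedness uses the $q$-finiteness of $X$) onto the germ at $0$ of a Zariski-closed subset of $H^1(A)$ (closedness of $\RR^i_r(A)$ uses the $q$-finiteness of $A$). Under these hypotheses every irreducible component of $\VV^i_r(X)$ through $1$ is forced to be a subtorus, and a posteriori the corresponding resonance components acquire linear, rationally defined germs at $0$. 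Deleting your Step 2 and replacing the cone hypothesis in Step 3 by ``algebraic germ'' yields precisely the paper's argument.
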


This theorem follows from  \cite[Corollary 2.2]{BW17} 
and Theorem \ref{thm:thmb}, and refines \cite[Theorem C(2)]{DP-ccm}; 
see also  \cite[Theorem 1.3(1)]{BW17}.  The Budur--Wang 
jump loci finiteness obstruction from 
Theorem \ref{thm:jumpobs} may be used to give a negative 
answer to Question \ref{mainpbm}, even for spaces $X$ 
which are finite CW-complexes. 

\begin{example}
\label{ex:fpresgrp}
Let $f$ be an integral Laurent polynomial in $n\ge 2$ variables, and 
assume its zero set in $(\C^{\times})^n$ contains the origin $1$, is irreducible 
but is not an algebraic subtorus; for instance, take $f(t)=\sum_{i=1}^n t_i -n$.  
Letting $\pi$ be a finitely presented group with $\VV^1_1(\pi)=\mathbf{V}(f)$ 
as in Example \ref{ex:syz}, we infer from Theorem \ref{thm:jumpobs} 
that the finite presentation complex of $\pi$ admits no $1$-finite $1$-model. 
\end{example}

Conversely, the existence of a $1$-finite $1$-model for a finitely generated 
group $\pi$ does not necessarily imply that $\pi$ is finitely presented.

\begin{example}
\label{ex:bbgrps}
Let $Y$ be a finite, connected CW-complex which is non-simply connected yet 
has $b_1(Y)=0$, and let $\pi$ be the Bestvina--Brady group associated to a flag 
triangulation of $Y$. It is proved in \cite[\S 10]{PS09} that $\pi$ is finitely generated
and $1$-formal, but not finitely presented. 
\end{example}

As the next family of examples illustrates, our infinitesimal finiteness 
obstruction from Theorem \ref{prop:infobsq} may be stronger than 
the Budur--Wang obstruction from Theorem \ref{thm:jumpobs}, 
even in the case when $q=1$. 

\begin{example}
\label{ex:subtler}
Consider the free metabelian group $\pi= \FF_n/\FF_n''$ with $n\ge 2$. 
The free group $\FF_n=\pi_1(\bigvee^n S^1)$ has a finite, formal 
classifying space; thus, Theorem \ref{thm:jumpobs} applies to $\FF_n$.
It follows that the varieties $\VV^i_r(\pi)\cong \VV^i_r(\FF_n)$ pass 
the Budur--Wang test for $i\le 1$ and $r\ge 0$. 

On the other hand, as we saw in the proof of Theorem \ref{thm:meta}, 
we have that $b_2(\M_1(\pi))= \infty$, 
and so the group $\pi$ admits no $1$-finite $1$-model. 
\end{example}

\subsection{Largeness}
\label{subsec:large}

A group $\pi$ is said to be {\em very large}\/ if it admits 
a free, non-cyclic quotient.  (Clearly, this property implies that $b_1(\pi)>0$.) 
Following Gromov \cite{Gr}, we say that $\pi$ is 
 {\em large}\/ if there is a finite-index subgroup $H<\pi$ 
such that $H$ surjects onto a free, non-cyclic group (i.e., $H$ is very large). 
In this definition, 
there is no loss of generality in assuming $H\triangleleft \pi$ is a normal 
subgroup.  In general, it is a difficult problem to decide whether a group is large. 
In \cite{Ko}, Koberda gives the following largeness test for groups 
admitting a finite presentation. 

\begin{theorem}[\cite{Ko}]
\label{thm:ko}
A finitely presented group $\pi$ is large if and only if there exists a 
finite-index subgroup $K<\pi$ such that $\VV^1_1(K)$ has infinitely 
many torsion points.  
\end{theorem}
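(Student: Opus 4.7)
The plan is to prove the two implications separately. The forward direction follows formally from the naturality properties of characteristic varieties recalled in \S\ref{subsec:cjl}. Assuming $\pi$ is large, by definition there exist a finite-index subgroup $K<\pi$ and an epimorphism $\varphi \colon K \surj \FF_n$ with $n\ge 2$. The induced map $\varphi^* \colon \widehat{\FF_n} \inj \widehat{K}$ is an injection of algebraic groups that carries $\VV^1_1(\FF_n)$ into $\VV^1_1(K)$, and since $\VV^1_1(\FF_n) = (\C^\times)^n$ contains infinitely many torsion points and $\varphi^*$ sends torsion to torsion, the set $\VV^1_1(K)$ inherits infinitely many torsion points.

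For the reverse direction, assume $K<\pi$ is a finite-index subgroup such that $\VV^1_1(K)$ has infinitely many torsion points. Since $\pi$ is finitely presented and $K$ has finite index in $\pi$, $K$ is itself finitely presented, so $\VV^1_1(K)$ is Zariski closed in $\widehat{K}$. The first ingredient is Laurent's theorem on torsion points in algebraic tori (the Manin--Mumford statement for $(\C^\times)^N$): any Zariski-closed subset containing infinitely many torsion points must contain a torsion translate of a positive-dimensional algebraic subtorus. Thus there exist a torsion character $\rho_0 \in \widehat{K}$ and a positive-dimensional algebraic subtorus $S \subset \widehat{K}$ with $\rho_0 \cdot S \subseteq \VV^1_1(K)$. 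To remove the torsion translate, let $N$ be the order of $\rho_0$ and set $K_0 := \ker\bigl(K \surj K_{\ab}/N K_{\ab}\bigr)$, a finite-index subgroup of $K$ on which $\rho_0$ restricts trivially. The restriction map $\widehat{K} \to \widehat{K_0}$ has finite kernel, so it carries $\rho_0 \cdot S$ to a positive-dimensional subtorus $S' \subset \widehat{K_0}$ passing through the identity; by the naturality of characteristic varieties under finite-index inclusions, $S' \subseteq \VV^1_1(K_0)$.

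The main obstacle lies in the final step: converting a positive-dimensional algebraic subtorus of $\VV^1_1(K_0)$ through $1$ into a surjection of a finite-index subgroup of $K_0$ onto a non-abelian free group. This uses the finite presentability of $K_0$ in an essential way, and relies on structural results for characteristic varieties of finitely presented groups, in the spirit of Bieri--Neumann--Strebel theory and the Alexander-invariant techniques developed by Koberda in \cite{Ko}. The heuristic is that a positive-dimensional component of $\VV^1_1$ through the identity arises from a \emph{pencil} $K_0 \to \Lambda$, with $\Lambda$ the fundamental group of a hyperbolic $2$-orbifold, hence virtually a non-abelian free group. Pulling back a finite-index free subgroup of $\Lambda$ through this pencil then yields a finite-index subgroup of $\pi$ surjecting onto $\FF_m$ for some $m\ge 2$, establishing that $\pi$ is large.
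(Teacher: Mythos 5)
This theorem is quoted by the paper from Koberda's work \cite{Ko}; the paper itself gives no proof, so there is nothing internal to compare against, and your attempt has to stand on its own. The forward direction you give is correct and is exactly the naturality argument the paper uses elsewhere (cf.\ Proposition \ref{prop:metabis}): an epimorphism $K\surj \FF_n$ embeds $\VV^1_1(\FF_n)=(\C^{\times})^n$, with its infinitely many torsion points, into $\VV^1_1(K)$. The first two steps of your reverse direction (Laurent's theorem producing a torsion-translated positive-dimensional subtorus, and passing to the finite-index subgroup $K_0$ to move it to the identity) are also sound and are consistent with how Koberda's argument actually begins.

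The genuine gap is in your final step. The implication ``a positive-dimensional subtorus of $\VV^1_1(K_0)$ through $1$ comes from a pencil $K_0\to\Lambda$ with $\Lambda$ a hyperbolic $2$-orbifold group'' is Arapura's structure theorem, which is a theorem about \emph{quasi-projective} groups, not about arbitrary finitely presented groups. For general finitely presented groups no such orbifold-pencil structure exists: as the paper's Example \ref{ex:syz} recalls, the degree-one characteristic varieties of finitely presented groups are essentially arbitrary subvarieties of the character torus, so there is no structural result of this kind to invoke, and your ``heuristic'' cannot be upgraded to a proof along these lines. The step where finite presentability genuinely enters Koberda's argument is different: a positive-dimensional subtorus through $1$ contains torsion characters of unbounded order, each such character contributes to $b_1$ of the corresponding finite abelian cover (via the standard formula expressing the first Betti number of a finite abelian cover as a sum of depths of torsion characters), and one thereby obtains a tower of finite-index subgroups with linear growth of first Betti number relative to the index; Lackenby's homology-growth criterion for largeness of finitely presented groups then concludes. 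Without replacing your pencil claim by an argument of this type, the hard implication is not established.
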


As noted by Koberda and Suciu in \cite{KS}, the finite presentation 
assumption is crucial for this largeness criterion.  For instance, taking 
again $\pi=\FF_n/\FF_n''$ with $n\ge 2$, the variety 
$\VV^1_1(\pi)=(\C^{\times})^n$ has infinitely many torsion points, though 
the group $\pi$ is solvable, and thus not large.  

Also in \cite{KS}, the notion of RFR$p$ group (for $p$ a prime) is introduced.  
The class of groups which are RFR$p$ for all 
primes $p$ includes all groups of the form $\pi_1(C)$, for $C$ 
a smooth complex curve with $\chi(C)<0$, and all right-angled 
Artin groups. Using the criterion from Theorem \ref{thm:ko}, 
the following generalization of an old result of 
Baumslag and Strebel ensues.

\begin{theorem}[\cite{KS}]
\label{thm:ks}
Let $\pi$ be a finitely generated 
group which is non-abelian and RFR$p$ for infinitely 
many primes $p$. Then $\pi/\pi''$ is not finitely presented. 
\end{theorem}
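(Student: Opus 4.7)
The plan is by contradiction. Suppose that $G := \pi/\pi''$ is finitely presented. Since $G$ is metabelian, it is solvable, whereas any large group contains a finite-index subgroup surjecting onto the non-solvable group $\FF_2$; hence $G$ cannot be large. Applying Koberda's criterion (Theorem \ref{thm:ko}) in its contrapositive form, every finite-index subgroup $K < G$ has the property that $\VV^1_1(K)$ contains only finitely many torsion points. Taking $K = G$ itself and using the metabelian invariance formula \eqref{eq:jumpmeta}, we conclude that $\VV^1_1(\pi) = \VV^1_1(G)$ has only finitely many torsion points.

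The strategy is to contradict this finiteness by exhibiting, under the hypotheses on $\pi$, infinitely many torsion characters in $\VV^1_1(\pi)$. For each prime $p$ in the infinite set for which $\pi$ is RFR$p$, the key lemma one needs to prove is that there exists a character $\rho_p \colon \pi \to \C^{\times}$ of order exactly $p$ with $\rho_p \in \VV^1_1(\pi)$. Since primes are pairwise distinct, such $\rho_p$'s form an infinite family of pairwise distinct torsion points of $\VV^1_1(\pi)$, directly contradicting the conclusion drawn from Koberda's criterion. This would complete the proof.

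The main obstacle, and the technical heart of the argument, is producing the torsion character $\rho_p$ from the RFR$p$ hypothesis together with the non-abelianness of $\pi$. The rough idea is that RFR$p$ supplies a rich supply of normal subgroups $N \triangleleft \pi$ of $p$-power index with a controlled rational cohomological behavior on the quotient $\pi/N$, and non-abelianness ensures that $\pi' \ne 1$ is detected by some such $N$. One can then use a Lyndon--Hochschild--Serre / transfer argument for the finite cyclic extension cut out by a surjection $\pi/N \surj \Z/p$ to produce a non-zero class in $H^1(\pi, \C_{\rho_p})$ for a character $\rho_p$ of order $p$ factoring through $\pi/N$. Since this construction is precisely the central technical contribution of \cite{KS}, I would invoke it as a black box rather than re-derive it; the remainder of the argument is then a clean combination of \eqref{eq:jumpmeta}, the non-largeness of solvable groups, and Theorem \ref{thm:ko}.
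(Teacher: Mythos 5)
Your proposal matches the argument the paper intends: Theorem~\ref{thm:ks} is stated here purely as a citation to \cite{KS}, with only the remark that it ``ensues'' from Koberda's criterion, and the parallel Proposition~\ref{prop:metabis} carries out exactly your skeleton (exhibit infinitely many torsion points in $\VV^1_1(\pi)=\VV^1_1(\pi/\pi'')$, apply Theorem~\ref{thm:ko} to the putatively finitely presented metabelian quotient, and contradict the fact that a solvable group cannot be large). Black-boxing the production of the order-$p$ torsion characters in $\VV^1_1(\pi)$ from the RFR$p$ and non-abelianness hypotheses is consistent with the paper's own treatment, since that step is precisely the technical content of \cite{KS} that is being cited rather than reproved.
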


Using a similar method, we may give an alternate proof 
of Theorem \ref{thm:meta}\ref{met1}.  

\begin{prop}
\label{prop:metabis}
Let $\pi$ be a finitely generated, very large group.  
Then $\pi/\pi''$ is not finitely presented. 
\end{prop}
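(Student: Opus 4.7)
The plan is to argue by contradiction, using Koberda's largeness criterion (Theorem \ref{thm:ko}) as the key device. Write $G = \pi/\pi''$. The first step is to observe that $G$ is metabelian, hence solvable, so every finite-index subgroup of $G$ is again solvable. Since a solvable group cannot surject onto a non-cyclic free group, $G$ itself is not large.

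Next, I would harness the very largeness of $\pi$. By hypothesis, there is an epimorphism $\pi \surj \FF_n$ for some $n\ge 2$, and, because $\pi''$ maps into $\FF_n''$, this descends to an epimorphism $\varphi\colon G \surj \FF_n/\FF_n''$. The naturality properties of characteristic varieties recalled in \S\ref{subsec:cjl} then yield an injection $\varphi^{*}\colon \widehat{\FF_n/\FF_n''}\inj \widehat{G}$ that carries $\VV^1_1(\FF_n/\FF_n'')$ into $\VV^1_1(G)$. Combining \eqref{eq:jumpmeta} with the computation $\VV^1_1(\FF_n)=(\C^{\times})^n$, the variety $\VV^1_1(\FF_n/\FF_n'')$ equals the full $n$-torus $(\C^{\times})^n$. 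Consequently, $\VV^1_1(G)$ contains a positive-dimensional algebraic subtorus through the identity, and this subtorus already supplies infinitely many torsion points.

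To finish, I would assume for contradiction that $G$ is finitely presented, and apply Theorem \ref{thm:ko} with $K = G$: the abundance of torsion points in $\VV^1_1(G)$ just exhibited forces $G$ to be large, contradicting the first step. Hence $G$ admits no finite presentation, as claimed.

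The argument is essentially a bookkeeping exercise once Koberda's criterion is invoked. The only point that deserves care, and the mild obstacle I would anticipate, is verifying that $\varphi^{*}\bigl(\VV^1_1(\FF_n/\FF_n'')\bigr) \subseteq \widehat{G}$ is genuinely a positive-dimensional subtorus rather than a lower-dimensional or finite image; but $\varphi^{*}$ is a morphism of affine algebraic groups with trivial kernel, so it preserves the dimension of the connected torus $(\C^{\times})^n$ and produces the required reservoir of torsion points.
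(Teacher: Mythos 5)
Your proposal is correct and follows essentially the same route as the paper: both arguments push $\VV^1_1(\FF_n)=(\C^{\times})^n$ into the characteristic variety of the metabelian quotient via the induced injection on character groups, invoke Koberda's criterion (Theorem \ref{thm:ko}) to conclude that a finitely presented $\pi/\pi''$ would be large, and derive a contradiction from solvability. The only cosmetic difference is that you factor through the induced surjection $\pi/\pi''\surj \FF_n/\FF_n''$ while the paper works with $\pi\surj\FF_n$ and then applies \eqref{eq:jumpmeta}; these are the same argument.
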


\begin{proof}
By assumption, there is an epimorphism $\varphi\colon \pi\surj \FF_n$ to a free 
group of rank $n\ge 2$.  The induced morphism, $\hat\varphi\colon 
\widehat{\FF_n}\inj \widehat{\pi}$, embeds $\VV^1_1(\FF_n)=(\C^{\times})^n$ 
into $\VV^1_1(\pi)$.  Thus, this variety, which coincides with $\VV^1_1(\pi/\pi'')$, 
contains infinitely many torsion points.  If the group $\pi/\pi''$ were finitely presented, 
then, by Theorem \ref{thm:ko}, it would be large. But this is impossible, 
since $\pi/\pi''$ is solvable, and subgroups of solvable groups are again solvable.
\end{proof}

It is now natural to ask whether the above result holds 
in the more general setting of large groups.  More precisely:
Let $\pi$ be a finitely generated, large group; is it true that 
the maximal metabelian quotient $\pi/\pi''$ is not finitely presented?
As the next family of examples shows, the answer to this question is negative.

\begin{example}
\label{ex:largefp}
Let $\pi=\pi_1 \ast \pi_2$ be the free product of two non-trivial finite groups, 
of orders $m_1$ and $m_2$, with $m_1 m_2>4$. By \cite[Proposition 4, p.~6]{Serre80}, 
the kernel of the natural surjection, $\pi_1 \ast \pi_2 \surj \pi_1 \times \pi_2$, 
is a free group $\FF$ of rank $(m_1-1)(m_2-1)>1$. Hence, $\pi$ is a large group. 
On the other hand, $\pi'$ contains $\FF$ as a subgroup of finite index. Therefore, 
$\pi'$ is finitely generated; hence, the abelian group $\pi'/\pi''$ is finitely generated, 
as well. Since $\pi/\pi'$ is also a finitely generated abelian group, we infer that $\pi/\pi''$ 
is finitely presented.
\end{example}

\subsection{Large quasi-projective groups}
\label{subsec:qplarge}

Recall that a quasi-projective variety is a Zariski open subset of 
a projective variety.  We will say that a space $X$ is a 
{\em quasi-projective manifold}\/ if it is a connected, 
smooth, complex quasi-projective variety. Every such 
manifold has the homotopy type of a finite CW-complex.

We now turn to the question of deciding whether a quasi-projective 
group (i.e., a group that can be realized as the fundamental 
group of a quasi-projective manifold) is large.  It turns out that 
a complete answer to this question can be given in terms of 
``admissible" maps to curves.

A map $f\colon X \to C$ from a quasi-projective manifold $X$ to a 
smooth complex curve $C$ is said to be {\em admissible}\/ if it is 
regular, surjective, and has connected generic fiber. 
It is easy to see that the homomorphism on 
fundamental groups induced by such a map, 
$f_{\sharp}\colon \pi_1(X)\to \pi_1(C)$, is surjective. 
We denote by $\cE(X)$ the family of admissible 
maps to curves with negative Euler characteristic, 
modulo automorphisms of the target.  

Deep work of Arapura \cite{Ar} characterizes   
those positive-dimensional, irreducible components of the characteristic 
variety $\VV^1_1(X)$ which contain the origin of the character 
group $\pi_1(X)^{\what}$: all such components are connected, 
affine subtori, which arise by pullback of the character torus 
$\pi_1(C)^{\what}$ along the homomorphism 
$f_{\sharp}\colon \pi_1(X)\to \pi_1(C)$ induced 
by some map $f\in \cE(X)$. 

Suppose now that $C$ is a smooth complex curve with $\chi(C)<0$.  It is 
readily seen that the fundamental group $\pi=\pi_1(C)$ surjects onto 
a free, non-abelian group; consequently, $\pi$ is large. More 
generally, we have the following characterization of large, 
quasi-projective groups.

\begin{prop}
\label{prop:qproj}
Let $X$ be a quasi-projective manifold.  Then:
\begin{enumerate}
\item  \label{lg1}
$\pi_1(X)$ is large if and only if there is a finite cover $Y\to X$ such that 
$\mathcal{E}(Y)\ne \emptyset$. 
\item \label{lg2}
$\pi_1(X)$ is very large if and only if  
$\mathcal{E}(X)\ne \emptyset$. 
\end{enumerate}
\end{prop}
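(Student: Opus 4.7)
The plan is to establish part~\ref{lg2} first, and then to derive part~\ref{lg1} from it by passing to a suitable finite \'etale cover.

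For the ``if'' direction of part~\ref{lg2}, suppose $f\in \cE(X)$ has target $C$. I would simply compose the surjection $f_{\sharp}\colon \pi_1(X)\surj \pi_1(C)$ induced by the admissible map $f$ with a surjection $\pi_1(C)\surj \FF_k$ of rank $k\ge 2$, which exists because $\chi(C)<0$ (as recalled immediately before the statement). The composite provides a free, non-cyclic quotient of $\pi_1(X)$, so $\pi_1(X)$ is very large.

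For the converse, I would start from an epimorphism $\varphi\colon \pi_1(X)\surj \FF_n$ with $n\ge 2$ and transport the jump loci via the induced embedding $\hat\varphi\colon \widehat{\FF_n}\inj \widehat{\pi_1(X)}$ of character groups. Since $\VV^1_1(\FF_n)=(\C^{\times})^n$, the image $T:=\hat\varphi(\widehat{\FF_n})$ is a connected algebraic subtorus of $\widehat{\pi_1(X)}$ of dimension $n\ge 2$, entirely contained in $\VV^1_1(\pi_1(X))$ and passing through~$1$. Being irreducible, $T$ lies inside some irreducible component $W$ of $\VV^1_1(X)$; this component contains the origin and is positive-dimensional. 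Arapura's characterization (recalled in the excerpt) then identifies $W$ with the pullback $f_{\sharp}^{*}(\widehat{\pi_1(C)})$ of the character torus of some smooth curve $C$ with $\chi(C)<0$ along the map induced by an admissible $f\in \cE(X)$, so in particular $\cE(X)\ne \emptyset$.

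Part~\ref{lg1} then follows at once from~\ref{lg2} by a covering argument. In one direction, if $\cE(Y)\ne \emptyset$ for some finite cover $Y\to X$, then part~\ref{lg2} gives that $\pi_1(Y)$ is very large, and since $\pi_1(Y)$ has finite index in $\pi_1(X)$, the group $\pi_1(X)$ is large. Conversely, assuming $\pi_1(X)$ is large, I would choose a normal, finite-index subgroup $H\triangleleft \pi_1(X)$ that is very large and take $Y\to X$ to be the corresponding finite Galois cover. Using the standard fact that a finite \'etale cover of a smooth quasi-projective variety is again smooth and quasi-projective, I would apply~\ref{lg2} to $Y$ to produce the desired $f\in \cE(Y)$.

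The only serious ingredient is Arapura's theorem, which I use entirely as a black box; the remaining arguments are formal manipulations with the functoriality of $\VV^1_1$ and with finite covers. The one point that warrants explicit attention is the passage from the subtorus $T$ to an actual positive-dimensional irreducible component of $\VV^1_1(X)$ through $1$, which is where Arapura applies; this is immediate because tori are irreducible, so $T$ sits inside a unique such component.
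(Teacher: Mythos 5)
Your proposal is correct and follows essentially the same route as the paper: the converse direction pushes $\VV^1_1(\FF_n)=(\C^\times)^n$ into $\VV^1_1$ via the epimorphism onto a free group and invokes Arapura's description of positive-dimensional components through the origin, while the forward direction composes $f_\sharp$ with a surjection $\pi_1(C)\surj\FF_k$. The only difference is organizational—you prove part~(2) first and deduce part~(1) by passing to the cover, whereas the paper proves part~(1) directly and notes that part~(2) is entirely similar.
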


\begin{proof}
We only prove part \eqref{lg1}; the proof of part \eqref{lg2} is entirely similar.

Assume first that the group $\pi=\pi_1(X)$ is large.  There is then a 
finite-index subgroup $G<\pi$ and an epimorphism $\varphi\colon G\surj F$, 
where $F$ is a free group of rank $n\ge 2$.  
Let $Y\to X$ be the finite cover corresponding to $G$; as shown for instance 
in \cite[Lemma 4.1]{FS}, the space $Y$ is again a quasi-projective 
manifold.  Arguing as in the proof of \cite[Corollary 1.9]{Ar}, we see that 
the induced morphism on character groups, 
$\varphi^*\colon \widehat{F} \inj \widehat{G}$, 
takes $\VV^1_1(F)\cong (\C^{\times})^n$ to a positive-dimensional 
subvariety of $\VV^1_1(Y)$ passing through $1$.  The irreducible 
component of $\VV^1_1(Y)$ containing this subvariety, then, corresponds 
to an admissible map in $\mathcal{E}(Y)$.  

Conversely, suppose that there is a finite cover $Y\to X$ supporting 
an admissible map $f\colon Y\to C$, where $\chi(C)<0$.  
Composing the induced homomorphism $f_{\sharp}\colon 
\pi_1(Y)\surj \pi_1(C)$ with a surjection $\pi_1(C)\surj \FF_n$ 
($n\ge 2$), we obtain an epimorphism $\pi_1(Y)\surj \FF_n$, 
thereby showing that $\pi_1(X)$ is large.
\end{proof}

In view of \cite{Ar}, the above geometric test for very largeness 
has the following interpretation in terms of characteristic varieties. 
Assume without loss of generality  that $b_1(X)>0$; 
then $\mathcal{E}(X)\ne \emptyset$ if and only if the analytic germ 
at $1$ of $\VV^1_1(X)$ is not equal to $\{ 1\}$. 

\subsection{Resonance and largeness}
\label{subsec:reslarge}
To conclude this section, we rephrase the last condition in terms of resonance varieties.
As shown by Morgan \cite{Mo}, every quasi-projective manifold $X$ 
admits a finite model $A(\oX,D)$; such a `Gysin' model depends on 
a smooth compactification $\oX$ for which the complement 
$D=\oX\setminus X$ is a normal crossings divisor.  
Let $A$ be a Gysin model for $X$, or any one of the more general Orlik--Solomon 
models constructed by Dupont \cite{Dup}. In either case, let us note that all 
resonance varieties of $A$ have {\em positive weights}, i.e., they are invariant 
with respect to a $\C^{\times}$-action on $H^1(A)$ with positive weights.

\begin{prop}
\label{prop:qprojtest}
Let $X$ be a quasi-projective manifold with $b_1(X)>0$. Let $A$ be any 
Orlik--Solomon model of $X$.  Then  $\pi_1(X)$ is very large if and only if 
$\RR^1_1(A)\ne \{ 0\}$.
\end{prop}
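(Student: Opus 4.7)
The plan is to chain together three equivalences, each supplied by a tool already available in the paper.

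First, I would reduce the question about very largeness to one about the analytic germ of a characteristic variety. By Proposition \ref{prop:qproj}\ref{lg2}, $\pi_1(X)$ is very large if and only if $\mathcal{E}(X)\ne \emptyset$. Combined with Arapura's classification, recalled in \S\ref{subsec:qplarge}, the positive-dimensional irreducible components of $\VV^1_1(X)$ passing through $1$ are exactly the subtori pulled back from curves $f\in \mathcal{E}(X)$; this yields (as noted in the paragraph preceding the proposition) that $\mathcal{E}(X)\ne \emptyset$ is equivalent to the analytic germ $\VV^1_1(X)_{(1)}$ being different from $\{1\}$.

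Next, I would invoke Theorem \ref{thm:thmb} applied to the finite Orlik--Solomon model $A$ (which is a $q$-finite $q$-model for $X$ for every $q$, in particular for $q=1$). The theorem provides a local analytic isomorphism between $\pi_1(X)^{\what}_{(1)}$ and $H^1(A)_{(0)}$ which identifies $\VV^1_1(X)_{(1)}$ with $\RR^1_1(A)_{(0)}$. Thus the germ $\VV^1_1(X)_{(1)}$ is non-trivial if and only if the germ $\RR^1_1(A)_{(0)}$ is non-trivial.

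Finally, I would upgrade non-triviality of the germ at $0$ to non-triviality of the whole variety $\RR^1_1(A)$ by exploiting the positive weights property emphasized just before the proposition: $\RR^1_1(A)$ is invariant under a $\C^{\times}$-action on $H^1(A)$ with strictly positive weights. If $\RR^1_1(A)$ contains some nonzero vector $v$, then the entire orbit $\{t\cdot v : t\in \C^{\times}\}$ lies in $\RR^1_1(A)$ and accumulates at $0$ as $t\to 0$, so $\RR^1_1(A)_{(0)}\ne \{0\}$. The converse is immediate. Combining the three equivalences gives the desired statement.

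I do not expect a substantive obstacle here: the Arapura step and the tangent-cone step are black-boxed, and the positive-weights argument is a one-line orbit argument. If anything, the only point requiring care is making sure that the Orlik--Solomon model $A$ qualifies as a $1$-finite $1$-model of $X$ so that Theorem \ref{thm:thmb} applies in degree one, and that the $\C^{\times}$-action referenced in \S\ref{subsec:reslarge} genuinely has \emph{strictly} positive weights on $H^1(A)$, which is the content of Morgan's and Dupont's constructions cited there.
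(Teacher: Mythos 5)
Your proposal is correct and follows essentially the same route as the paper: reduce very largeness to the non-triviality of the germ at $1$ of $\VV^1_1(X)$ via Proposition \ref{prop:qproj} and the Arapura discussion, transfer to the germ at $0$ of $\RR^1_1(A)$ via Theorem \ref{thm:thmb}, and globalize using the positive-weights $\C^{\times}$-action. Your explicit orbit argument is just a slightly more detailed phrasing of the paper's observation that positive weights force every irreducible component of $\RR^1_1(A)$ to pass through the origin.
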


\begin{proof}
As we already know, $\pi_1(X)$ is not very large if and only if the analytic 
germ at $1$ of $\VV^1_1(X)$ equals $\{ 1\}$, or equivalently, by Theorem \ref{thm:thmb}, 
the germ at $0$ of $\RR^1_1(A)$ equals $\{ 0\}$.   Since $\RR^1_1(A)$ 
has positive weights, all its irreducible components pass through $0$, 
and the aforementioned 
local equality is equivalent to the global equality $\RR^1_1(A)= \{ 0\}$.
\end{proof}

\begin{remark}
\label{rem:rescomp}
In practice, the geometric test for very largeness from Proposition \ref{prop:qproj} 
may be very difficult to implement, while the alternative test involving characteristic 
varieties requires an explicit presentation for the fundamental group.
On the other hand, once an Orlik--Solomon model $A$ has been constructed, 
the property that $\RR^1_1(A)= \{ 0\}$ can be verified concretely, using standard 
tools from computational commutative algebra.  The practical value of the 
infinitesimal test from Proposition \ref{prop:qprojtest} is 
well illustrated by the next class of examples.
\end{remark}

\begin{example}
\label{ex:partial}
Let $\Sigma_g$ be a compact, connected Riemann surface 
of genus $g$, and let $X=F_{\Gamma}(\Sigma_g)$ be the partial 
configuration space associated to a finite simple graph $\Gamma$.  
More concretely, if $n$ is the number of vertices of $\Gamma$, then 
$F_{\Gamma}(\Sigma_g)$ is the complement in 
$\Sigma_g^n$ of the union of the diagonals $z_i=z_j$, indexed by 
the edges of $\Gamma$. 

No presentation is available for the fundamental group 
$\pi_{\Gamma, g}=\pi_1(F_{\Gamma}(\Sigma_g))$, 
for arbitrary graph $\Gamma$ and genus $g$.  
On the other hand, as  noted in \cite{BMPP}, the 
quasi-projective manifold $F_{\Gamma}(\Sigma_g)$ admits an easily 
computable Orlik--Solomon model $A$.  Computing the resonance 
variety $\RR^1_1(A)$ leads to a complete, explicit description of 
$\mathcal{E}(F_{\Gamma}(\Sigma_g))$; such a description is given in 
\cite[Theorem 1.1]{BMPP}, for all $g\ge 0$ and for all finite graphs $\Gamma$. 
In particular, $\mathcal{E}(F_{\Gamma}(\Sigma_g))= \emptyset$, that is, 
$\pi_{\Gamma,g}$ is not very large, if and only if 
either $g=1$ and $\Gamma$ has no edges, or $g=0$ and  $\Gamma$ contains 
no complete subgraph on $4$ vertices. 
\end{example}

\section{A functorial $1$-minimal model map}
\label{sec:holmal}

We devote this section to the proof of Theorem \ref{thm:nat1model-intro}, and derive 
a topological interpretation.

\subsection{Holonomy Lie algebras}
\label{subsec:holo}

Given a $1$-finite $\cdga$ $A$, let $A_i=\Hom (A^i, \k)$ be the dual 
vector space.  Let $\mu^* \colon A_2 \to A_1\wedge A_1$ 
be the dual to the multiplication map 
$\mu \colon A^1\wedge A^1\to A^2$, and let $d^* \colon A_2\to A_1$ be 
the dual of the differential $d \colon A^1\to A^2$. We shall denote by 
$\L(A_1)$ the free Lie algebra on the $\k$-vector space $A_1$, 
and identify $\L_1(A_1)=A_1$ and  $\L_2(A_1)=A_1\wedge A_1$.

\begin{definition}[\cite{MPPS}]
\label{def:holo cdga}
The {\em holonomy Lie algebra}\/ of a $1$-finite $\cdga$ $A=(A^{\hdot},d)$ 
is the quotient of $\L(A_1)$ by the ideal generated by the image of the map 
$\partial_A=d^* + \mu^*$,
\begin{equation}
\label{eq:holo}
\h(A) = \L(A_1) / (\im (\partial_A)). 
\end{equation}
\end{definition}

Clearly, this construction is functorial.  Observe that 
the Lie algebra $\h(A)$ depends 
only on the $\cdga$ $A[1]$ constructed in Lemma \ref{lem:morefin}, 
i.e., on the sub-$\cdga$ 
\begin{equation}
\label{eq:a1}
\k \cdot 1 \oplus A^1 \oplus (d(A^1)+\mu(A^1\wedge A^1))
\end{equation}
of the truncation $A^{\le 2}$. In particular, $\h(A)$ is finitely presented. 

In the case when $d=0$, the above definition coincides with  
the classical holonomy Lie algebra of K.T.~Chen \cite{Ch}.  
In this situation, the Lie algebra $\h(A)$ inherits a natural 
grading from the free Lie algebra $\L(A_1)$, which is compatible 
with the Lie bracket.   Consequently, $\h(A)$ is a finitely-presented, 
graded Lie algebra, with generators in degree $1$ and relations in 
degree $2$.

In general, though, the ideal generated by $\im(\partial_A)$ 
is not homogeneous, and so the Lie algebra $\h(A)$ does not 
inherit a grading from $\L(A_1)$. Here is a simple example 
of a minimal, finite $\cdga$ for which this happens.

\begin{example}
\label{ex:heis}
Let $A=\bwedge(a_1,a_2,a_3)$ be the exterior algebra on generators $a_i$ 
in degree $1$, endowed with the differential given by $d{a_1}=d{a_2}=0$ 
and $d{a_3}=a_1 \wedge a_2$.  Identify $\L(A_1)$ with the free Lie algebra 
on dual generators $x_1,x_2,x_3$. Then the ideal $(\im(\partial_A))$ is 
generated by $x_3+[x_1,x_2]$, $[x_1,x_3]$, and $[x_2,x_3]$, and thus 
is not homogeneous. 
\end{example}

In the above example,  $\h(A)$ still admits the structure 
of a graded Lie algebra, with $x_1$ and $x_2$ in degree $1$, and $x_3$ in degree $2$.
Nevertheless, using a construction from \cite{SW}, we may define a minimal, finite 
$\cdga$ $A$ for which $\h(A)$ does not admit any grading compatible with the 
lower central series filtration.

\begin{example}
\label{ex:noncarnot}
Let $A=\bwedge(a_1,\dots, a_5)$, with differential given by 
$d{a_4}=a_1 \wedge a_3$, $d{a_5}=a_1 \wedge a_4+a_2 \wedge a_3$, 
and $d{a_i}=0$, otherwise. Then, as shown in \cite[Example 10.5]{SW}, 
$\h(A)$ is not isomorphic to $\gr(\h(A))$, the associated 
graded Lie algebra with respect to the LCS filtration.
\end{example}

\subsection{Holonomy and flat connections}
\label{subsec:holflat}

Next, we recall from \cite{DP-ccm} and \cite{MPPS} another 
(bifunctorial) construction which will be important to us. 
For a $\cdga$ $A$ and a Lie algebra $\g$, 
let $\F(A, \g)$ be the set of $\g$-valued {\em flat connections}\/ on $A$,    
i.e., the set of those elements $\omega \in A^1\otimes \g$ satisfying the
Maurer--Cartan equation,
\begin{equation}
\label{eq:mc}
d\omega + \tfrac{1}{2}[\omega, \omega]=0 \, .
\end{equation}

Suppose now that $A$ is $1$-finite. By \cite[Proposition 4.5]{MPPS}, 
the natural isomorphism $A^1\otimes \g \isom \Hom (A_1, \g)$ 
induces a natural identification,
\begin{equation}
\label{eq:flathol}
\xymatrix{\F(A, \g) \ar^(.38){\cong}[r] & \Hom_{\Lie} (\h (A), \g)} .
\end{equation}

Assume also that $\g$ is finite-dimensional.  We then let 
$\cC(\g)=\big(\bigwedge \g^*, d)$ be the Chevalley--Eilenberg 
complex of $\g$, that is, the $\cdga$ whose underlying graded 
algebra is the exterior algebra on $\g^*$, and whose differential 
 is the extension by the graded Leibnitz rule of the dual of 
the signed Lie bracket, $d=-\beta^*$, on the algebra generators, 
see e.g.~\cite{HS,FHT}.  There is then a natural isomorphism 
$A^1\otimes \g \isom \Hom (\g^*, A^1)$, which, 
by \cite[Lemma 3.4]{DP-ccm}, induces a natural identification,
\begin{equation}
\label{eq:flatcochains}
\xymatrix{\F(A, \g) \ar^(.36){\cong}[r] &\Hom_{\cdga} (\cC (\g), A)} .
\end{equation}

We will also consider the cochain functor $\wC$, defined on finitely 
generated Lie algebras and taking values in $1$-minimal $\cdga$s. 
This functor associates to a finitely generated Lie algebra $\h$ the 
inductive limit of $\cdga$s 
\begin{equation}
\label{eq:chat}
\wC (\h)= \varinjlim_n \cC (\h/\Gamma_n).
\end{equation}

The $1$-minimality 
property of $\wC (\h)$ is a consequence of the well-known 
fact that the functor $\cC$ sends finite-dimensional central 
Lie extensions to Hirsch extensions of $\cdga$s. For related 
constructions and results, we refer to \cite[Chapter 2]{FHT}.

\subsection{A classifying map}
\label{subsec:defclass}
As before, 
let $A$ be a $1$-finite $\cdga$, with holonomy Lie algebra $\h=\h(A)$. 
By (\ref{eq:flathol}), the identity map of $\h$ may be identified with 
the `canonical' flat connection, 
\begin{equation}
\label{eq:can}
\omega = \sum_i x_i^* \otimes x_i \in \F (A, \h(A)),
\end{equation}
where $\{ x_i \}$ is a basis for $A_1$ and $\{ x_i^* \}$ 
is the dual basis for $A^1$. This gives rise to a compatible 
family of flat connections, $\{ \omega_n \in \F(A, \h/\Gamma_n) \}_{n\ge 1}$.
Using the correspondence (\ref{eq:flatcochains}), we obtain a compatible 
family of $\cdga$ maps, $f_n \colon \cC (\h/\Gamma_n) \to A$. 
Passing to the limit, we arrive at a natural $\cdga$ map,
\begin{equation}
\label{eq:classmap}
\xymatrixcolsep{20pt}
\xymatrix{f\colon \wC (\h(A)) \ar[r]& A}.
\end{equation}

Our goal in this section is to show that this map is as an infinitesimal 
analog of the classifying map $X\to K(\pi_1(X), 1)$.  More precisely, 
we will prove the following theorem.  

\begin{theorem}
\label{thm:nat1model}
For any  $1$-finite $\cdga$ $A$, 
the classifying map $f\colon \wC (\h(A)) \to A$ 
is a $1$-minimal model map.
\end{theorem}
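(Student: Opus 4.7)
The plan is to verify that $f \colon \wC(\h(A)) \to A$ satisfies the two conditions defining a $1$-minimal model map: (a) $\wC(\h(A))$ is $1$-minimal as a \cdga, and (b) $f$ is a $1$-equivalence. Condition~(a) is structural: writing $\h=\h(A)$, each inclusion $\cC(\h/\Gamma_{n-1}) \hookrightarrow \cC(\h/\Gamma_n)$ is \cdga-dual to the central extension $0 \to \Gamma_{n-1}\h/\Gamma_n\h \to \h/\Gamma_n \to \h/\Gamma_{n-1} \to 0$, whose kernel is abelian, and so becomes a Hirsch extension in degree~$1$ under $\cC$. Passing to the colimit $\wC(\h) = \varinjlim_n \cC(\h/\Gamma_n)$ then yields a $1$-minimal \cdga.

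For condition~(b), I would first invoke Lemma~\ref{lem:morefin}: the inclusion $A[1] \hookrightarrow A$ is a $1$-equivalence, $\h(A) = \h(A[1])$ depends only on $A[1]$, and by naturality the classifying map for $A$ factors as $\wC(\h(A[1])) \to A[1] \hookrightarrow A$. So one may assume $A$ is a finite \cdga concentrated in degrees $\le 2$. The isomorphism on $H^1$ is then immediate: Chevalley--Eilenberg theory gives $H^1(\cC(\h/\Gamma_n)) = (\h/\Gamma_n)_{\ab}^*$ for all $n\ge 2$, and the abelianization of $\h(A) = \L(A_1)/(\im \partial_A)$ equals $A_1/d^*(A_2)$, whose dual is $\ker(d \colon A^1 \to A^2) = Z^1(A) = H^1(A)$. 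Unravelling the canonical flat connection $\omega = \sum x_i^* \otimes x_i$ shows that $f^*$ implements this very identification.

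The substantive step, and the main obstacle, is the injectivity of $f^*$ on $H^2$. The key idea is that the map $\partial_A = d^* + \mu^*$ is designed to encode exactly those obstructions that Sullivan's inductive construction of $\M_1(A)$ must kill: the component $d^*$ accounts for the descent from $Z^1(A)$ to $H^1(A)$, while $\mu^*$ accounts for the kernel of $H^2(\bwedge H^1(A)) \to H^2(A)$ and its iterates at higher stages. The cleanest way to make this precise is to prove by induction on $n$ that the finite Sullivan approximation $\M_{n-1}\to A$, obtained by successively adjoining generators to kill $H^2$-kernels, can be identified with $\cC(\h/\Gamma_n) \to A$ in a manner compatible with $f_n$. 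The base case $n=2$ is the identification $\cC(\h/\Gamma_2) = \bwedge H^1(A) = \M_1$ recorded above. For the inductive step, Sullivan's new generators lie in $\ker\bigl(H^2(\M_{n-1}) \to H^2(A)\bigr)$, whereas the new generators of $\cC(\h/\Gamma_{n+1})$ over $\cC(\h/\Gamma_n)$ are dual to $\Gamma_n\h/\Gamma_{n+1}\h$; Hopf's formula $H_2(\h/\Gamma_{n+1}) = \bigl((I+\L^{\ge n+1})\cap [\L,\L]\bigr) \bigm/ \bigl[\L,\, I+\L^{\ge n+1}\bigr]$, together with the explicit form of $\partial_A$, dualizes to a bijection between these two sets of generators, and the correspondence $\F(A,\g) \cong \Hom_{\cdga}(\cC(\g),A)$ of \eqref{eq:flatcochains} ensures that the extension of $f_n$ to $f_{n+1}$ matches Sullivan's extension of the previous stage's model map. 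The resulting isomorphism $\wC(\h(A)) \cong \M_1(A)$ over $A$ then transports the $1$-equivalence property of the Sullivan model onto $f$, delivering both the $H^1$-isomorphism and the required $H^2$-monomorphism.
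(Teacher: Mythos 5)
Your overall plan is the right one and parallels the paper's: reduce via Lemma \ref{lem:morefin} to a finite $\cdga$ $A$ with $A^{>2}=0$, note that $\wC(\h(A))$ is $1$-minimal because $\cC$ turns the central extensions $\h/\Gamma_{n+1}\surj\h/\Gamma_n$ into Hirsch extensions, and verify the $H^1$-isomorphism by identifying $\h(A)_{\ab}$ with $A_1/\im(d^*)$. All of that matches the paper. The gap is in the $H^2$ step. Identifying $\wC(\h(A))\to A$ with the Sullivan tower requires, at each stage, the equality
\[
\ker\bigl(H^2(\cC(\h/\Gamma_n))\to H^2(A)\bigr)
=\ker\bigl(H^2(\cC(\h/\Gamma_n))\to H^2(\cC(\h/\Gamma_{n+1}))\bigr).
\]
The inclusion $\supseteq$ is formal, since the maps $f_n$ are compatible with the inclusions of the tower; but the inclusion $\subseteq$ is exactly the substantive content of the theorem, and your appeal to ``Hopf's formula together with the explicit form of $\partial_A$'' does not deliver it. Dualizing, what must be proved is that
$\im\bigl(H_2(\h/\Gamma_{n+1})\to H_2(\h/\Gamma_n)\bigr)\subseteq
\im\bigl(K\to H_2(\h/\Gamma_n)\bigr)$,
where $K=\ker(d^*)\subseteq A_2$ maps to cycles in $\bwedge^2(\h/\Gamma_n)$ via $\mu^*$.

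The paper establishes this in two nontrivial steps, neither of which is a formal consequence of Hopf's formula. First (Lemma \ref{lem:cstab}), every class in $\im\bigl(H_2(\h/\Gamma_{n+1})\to H_2(\h/\Gamma_n)\bigr)$ lifts all the way to $H_2(\h)$; this uses $\Gamma_{n+1}\h=[\h,\Gamma_n\h]$ to correct a chosen lift of a cycle. Second (Lemma \ref{lem:partialprime}), the induced map $K\to H_2(\h)$ is \emph{surjective}: given a cycle $e\in\bwedge^2\h$, one lifts it to $f\in\bwedge^2\L$, expands $\beta f\in\rr\cap\L^{\ge 2}$ in terms of the additive generators $\ad_J(\partial y_\lambda)$ of the relation ideal, observes that the length-zero coefficients assemble into an element $y\in K$ because their linear parts must cancel, and then corrects $f$ by an explicit element of $\L^1\wedge\rr$ and a boundary from $\bwedge^3\L$ (using the vanishing of the higher homology of the free Lie algebra). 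The delicate point is precisely the interaction between the linear part $d^*$ and the quadratic part $\mu^*$ of $\partial_A$ inside the non-homogeneous ideal $\rr$; your proposal asserts the resulting ``bijection between the two sets of generators'' but supplies no argument for it, so the core of the proof is missing.
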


Although some form of this theorem may be known to specialists, 
we could not find it stated as such elsewhere.  
To prove the theorem, we need to show that 
$H^1(f)$ is an isomorphism and $H^2(f)$ is a 
monomorphism. It will be convenient to assume that 
$A$ is finite and $A^{>2}=0$. By naturality and the discussion 
from \S\ref{subsec:holo}, there is no loss of generality in doing so.

We will use the dual Chevalley--Eilenberg complex,   
which computes the untwisted Lie algebra homology 
$H_{\hdot}(\g)$ of an arbitrary Lie algebra $\g$.  This chain complex has the form 
$\{ \partial_n \colon \bwedge^n \g \rightarrow \bwedge^{n-1} \g \}_{n\ge 0}$, 
with differentials 
\begin{equation}
\label{eq:diff}
\partial_n (x_1\wedge \cdots \wedge x_n)= \sum_{i<j} (-1)^{i+j} 
\beta(x_i\wedge x_j) x_1\wedge \cdots \wedge \widehat{x_i}\wedge  \cdots \wedge 
\widehat{x_j}\wedge \cdots \wedge x_n\, .
\end{equation}

The chain complex $( \bwedge^\hdot \g, \partial )$ is functorial: 
given a morphism of Lie algebras, 
$\varphi \colon \g \to \h$,  the following diagram commutes, 
\begin{equation}
\label{eq:lienat}
\begin{gathered}
\xymatrixcolsep{28pt}
\xymatrix{
\bwedge^n \g   \ar^(.48){\partial_n}[r] \ar^{\bigwedge^n \varphi}[d] &
\bwedge^{n-1} \g \ar^{\bigwedge^{n-1} \varphi}[d]\\
\bwedge^n \h  \ar^(.48){\partial_n}[r]& \bwedge^{n-1} \h \, 
}
\end{gathered}
\end{equation}
for each $n\ge 1$.  Furthermore, both vertical arrows are surjective 
when $\varphi$ is an epimorphism.

We will come back to the proof of Theorem \ref{thm:nat1model} 
in \S\ref{subsec:pfclass}, after proving two technical lemmas.   

\subsection{Two stability properties}
\label{subsec:stab}
By definition, the generating vector space $V$ of a $1$-minimal $\cdga$ 
$(\bigwedge V, d)$ has an increasing, exhaustive filtration 
$\{ V^n\}_{n\ge 1}$ starting at $V^1=0$, with the property that each 
$(\bigwedge V^n, d)$ is a sub-$\cdga$ and each inclusion 
$(\bigwedge V^n, d)\inj (\bigwedge V^{n+1}, d)$ is a Hirsch extension.
We call such a filtration a {\em defining filtration}\/ for $(\bigwedge V, d)$. 
(Here and elsewhere we use upper indices for increasing filtrations and 
lower indices for decreasing filtrations.)
Defining filtrations having the following two stability properties 
will be important in our approach:

\begin{romenum}
\item \label{eq:stab1}
For all $m>n>1$, the natural inclusion $V^n \inj V^m$ induces 
an isomorphism
$H^1\big(\bigwedge V^n, d\big)\isom H^1\big(\bigwedge V^m, d\big)$.

\item \label{eq:stab2}
For all $m>n$, the kernel of the map 
$H^2(\bwedge V^n, d) \rightarrow H^2(\bwedge V^m, d)$ 
coincides with the kernel of the map 
$H^2(\bwedge V^n, d) \rightarrow H^2(\bwedge V^{n+1}, d)$.
\end{romenum}

Let $\h$ be a finitely generated Lie algebra. 

\begin{lemma}
\label{lem:cstab}
The defining filtration $V^n= (\h/\Gamma_n)^*$ on the $1$-minimal 
$\cdga$ $\wC (\h)$ satisfies properties (\ref{eq:stab1}) and (\ref{eq:stab2}).
\end{lemma}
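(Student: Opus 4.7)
The plan is to prove both stability properties directly from the construction $\wC(\h) = \varinjlim \cC(\h/\Gamma_n)$, exploiting the canonical identification $(\bwedge V^n, d) = \cC(\h/\Gamma_n)$. The crucial observation to set up first is that under this identification, $V^n \subseteq V^m$ (for $n \leq m$) has the intrinsic description as the annihilator of $\Gamma_n/\Gamma_m$ inside $(\h/\Gamma_m)^*$; in particular, a degree-$1$ element $\beta \in V^m$ lies in $V^{k+1}$ if and only if $\beta|_{\Gamma_{k+1}/\Gamma_m} = 0$.

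For Property (I), I would invoke the classical computation $H^1(\cC(\g)) = (\g_{\ab})^*$, valid for any finite-dimensional Lie algebra $\g$: a $1$-cochain $\xi \in \g^*$ is closed iff $\xi|_{[\g,\g]} = 0$, and there are no $1$-coboundaries. Specialized to $\g = \h/\Gamma_n$ with $n \geq 2$, the inclusion $\Gamma_n \subseteq \Gamma_2 = [\h,\h]$ yields the canonical identification $(\h/\Gamma_n)_{\ab} = \h_{\ab}$, so $H^1(\bwedge V^n, d) = (\h_{\ab})^*$ independently of $n$, and the inclusions $V^n \inj V^m$ induce the identity map on $(\h_{\ab})^*$ for all $m > n \geq 2$.

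The heart of the argument is Property (II), for which I would first establish the following identity, a direct consequence of the Chevalley--Eilenberg formula $(d\beta)(x \wedge y) = -\beta([x,y])$: for a degree-$1$ element $\beta \in V^m$ and any $k$ with $k < m$,
\[
d\beta \in \bwedge^2 V^k \quad \Longleftrightarrow \quad \beta \in V^{k+1}.
\]
Both directions reduce to the observation that the linear span of $[\Gamma_k/\Gamma_m,\, \h/\Gamma_m]$ inside $\h/\Gamma_m$ equals $\Gamma_{k+1}/\Gamma_m$, which is the defining property of the lower central series. With this identity in hand, Property (II) is immediate: if $\alpha \in Z^2(\bwedge V^n, d)$ satisfies $\alpha = d\beta$ for some $\beta \in V^m$, then the condition $\alpha \in \bwedge^2 V^n$ forces $\beta \in V^{n+1}$, so $\alpha$ is already exact in $(\bwedge V^{n+1}, d)$. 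I expect the main pitfall to be purely bookkeeping: one must resist the temptation to pick non-canonical complements of the form $V^{n+1} = V^n \oplus W_1$ and instead work throughout with the intrinsic annihilator description of $V^n \subseteq V^m$, so that the Chevalley--Eilenberg differential can be read off cleanly.
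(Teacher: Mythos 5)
Your proof is correct. For property (\ref{eq:stab1}) it coincides with the paper's argument: both reduce to the observation that $(\h/\Gamma_n)_{\ab}=\h_{\ab}$ stabilizes for $n\ge 2$. For property (\ref{eq:stab2}), however, you take a genuinely different route. The paper dualizes to Lie algebra homology, reformulates (\ref{eq:stab2}) as the inclusion of images
$\im \big(H_2(\h/\Gamma_{n+1}) \to H_2(\h/\Gamma_{n}) \big) \subseteq \im \big(H_2(\h) \to H_2(\h/\Gamma_{n})\big)$,
and verifies it by an explicit cycle-lifting argument in the chain complexes $(\bwedge^{\hdot}(\h/\Gamma_n),\partial)$, correcting lifts by elements of $\h\wedge\Gamma_n$ and by $\partial_3$-boundaries. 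You instead stay entirely at the cochain level and establish the sharper identity $V^{k+1}=d_{|V}^{-1}\big(\bwedge^2 V^k\big)$ via the annihilator description of $V^k\subseteq V^m$ and the Chevalley--Eilenberg formula; this says precisely that the defining filtration $\{V^n\}$ coincides with the canonical filtration $\{W^n\}$ of (\ref{eq:wfilt}), and then (\ref{eq:stab2}) follows in one line (exactly as in the forward direction of the proof of Lemma \ref{lem:canfilt}). Both arguments ultimately rest on the same fact, namely that $\Gamma_{k+1}$ is the linear span of $[\Gamma_k,\h]$, together with the finite-dimensional duality $\bwedge^2(W^{\perp})=(W\wedge U)^{\perp}$ implicit in your bookkeeping. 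Your version is shorter and more elementary, and as a byproduct identifies $\{V^n\}$ with the canonical filtration, which is what Corollary \ref{cor:malholo} needs; the paper's homological detour has the compensating advantage of setting up the diagrams (\ref{eq:lienat}) and the complexes that are reused immediately afterwards in Lemma \ref{lem:partialprime} and in the proof of Theorem \ref{thm:nat1model}.
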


\begin{proof}
We will translate the stability properties for the defining filtration 
$\{V^n\}_{n\ge 1}$ on $\wC (\h)$ in terms of 
Lie algebra homology, and use the commuting 
diagrams (\ref{eq:lienat}) in degrees $n \le 3$,
for the two canonical Lie projections,  
$p\colon \h/\Gamma_m \surj \h/\Gamma_{n+1}$ 
and $q\colon \h/\Gamma_{n+1} \surj \h/\Gamma_{n}$.

Property (\ref{eq:stab1}) is equivalent to the claim that the 
natural map $(\h/\Gamma_m)_{\ab} \to (\h/\Gamma_n)_{\ab}$ \
is an isomorphism, a claim which is obvious for $m>n>1$.  
Property (\ref{eq:stab2}) is equivalent to having 
an inclusion,
\begin{equation}
\label{eq:stab2dual}
\im \big(H_2(\h/\Gamma_{n+1}) \rightarrow H_2(\h/\Gamma_{n}) \big) 
\subseteq \im \big(H_2(\h/\Gamma_{m}) \rightarrow H_2(\h/\Gamma_{n})\big).
\end{equation}

We will verify this inclusion in a stronger form,  with $\h$ replacing 
$\h/\Gamma_{m}$ in (\ref{eq:stab2dual}). To this end, we start with an element 
$e_n\in \bwedge^2 (\h/\Gamma_{n})$ with $\partial_2 (e_n)=0$. If $[e_n]\in \im H_2(q)$,  
we may find $e_{n+1}\in \bwedge^2 (\h/\Gamma_{n+1})$ with $\partial_2 (e_{n+1})=0$ 
and $f_{n}\in \bwedge^3 (\h/\Gamma_{n})$ such that $\bwedge^2 (q)e_{n+1}= e_n + \partial_3 f_n$.

Now lift $f_n$ to  $f_{n+1}\in \bwedge^3 (\h/\Gamma_{n+1})$ via $\bigwedge^3 (q)$. 
Replacing  $e_{n+1}$ by $e_{n+1}- \partial_3 f_{n+1}$, we see that we may suppose that
$f_n=0$. Next, lift $e_{n+1}$ to $e \in \bigwedge^2 (\h)$ via $\bigwedge^2 (p)$. 
Since $\partial_2 (e_{n+1})=0$, we infer that $\partial_2 e \in \Gamma_{n+1}$.
By the definition of the $\lcs$ filtration \cite{Serre}, 
$\Gamma_{n+1}= \beta(\h \wedge \Gamma_n)$. This implies that 
$\partial_2 e= \partial_2 e'$, where $e' \in \h \wedge \Gamma_n$, 
and therefore $\bwedge^2 (q\circ p)(e')=0$. Therefore, $\partial_2 (e-e')=0$ and 
$\bwedge^2 (q \circ p)(e -e')=e_n$. Hence, $[e_n]\in \im H_2(q\circ p)$, 
which completes the proof.
\end{proof}

Next, we examine the requirement that 
\begin{equation}
\label{eq:stab2bis}
\ker \big(H^2(f_n) \colon H^2 \cC(\h/\Gamma_n) \rightarrow H^2A\big) \subseteq  
\ker \big(H^2(q) \colon H^2 \cC(\h/\Gamma_n) \rightarrow H^2 \cC(\h/\Gamma_{n+1})\big)\, ,
\end{equation}
where $H^2(q)$ is the map induced by the canonical Lie projection, 
$q\colon \h/\Gamma_{n+1} \surj \h/\Gamma_n$.
Recall that the $\cdga$ $A$ is supposed to be finite with $A^{>2}=0$. Dualizing, 
we infer that the existence of inclusion (\ref{eq:stab2bis}) is equivalent to the existence 
of the inclusion 
\begin{equation}
\label{eq:stab2dualbis}
\im \big(H_2(q)\colon H_2(\h/\Gamma_{n+1}) \rightarrow H_2(\h/\Gamma_n)\big) \subseteq
\im \big(H_2(f_n)\colon H_2 A \rightarrow H_2(\h/\Gamma_n)\big)\, .
\end{equation}

By construction, the dual of the restriction of the map 
$f_n \colon \cC (\h/\Gamma_n) \to A$ to the space of algebra 
generators of the source, $f_n^*\colon A_1 \rightarrow \h/\Gamma_n$, 
coincides with the composition $p_n \circ \iota$. Here, $\iota$ is the canonical inclusion 
$A_1 = \L^1 \inj \L$, where $\L$ denotes the free Lie algebra on $A_1$, and 
$p_n\colon \L \surj \h/\Gamma_n$ stands for the canonical Lie projection. 

The following commuting diagram describes 
the map between chain complexes in low degrees induced by the 
cochain map $f_n \colon \cC (\h/\Gamma_n) \rightarrow A$,
\begin{equation}
\label{eq:fndual}
\xymatrixcolsep{60pt}
\begin{gathered}
\xymatrix{
0   \ar^(.48){}[r] \ar^{}[d] & \bwedge^3(\h/\Gamma_n)\, \ar^{\partial_3}[d] \\
A_2  \ar^(.48){\bigwedge^2 (p_n \iota) \mu^*}[r]  \ar^{d^*}[d] 
& \bwedge^2(\h/\Gamma_n)\, \ar^{\partial_2}[d]\\
A_1  \ar^(.48){p_n \iota}[r] &  \h/\Gamma_n \, .
}
\end{gathered}
\end{equation}

We denote by $K\subseteq A_2$ the kernel of $d^*$. 
It follows from diagram (\ref{eq:fndual}) that the map 
$\bwedge^2 (p_n \iota) \mu^*$ 
sends $K$ into $\ker (\partial_2)$, and thus induces a map 
$[\bwedge^2 (p_n \iota) \mu^*] \colon K \rightarrow H_2(\h/\Gamma_n)$.
Again by duality, (\ref{eq:stab2dualbis}) is equivalent to 
the existence of the inclusion 
\begin{equation}
\label{eq:stab2dualfinal}
\im \big(H_2(q)\colon H_2(\h/\Gamma_{n+1}) \rightarrow H_2(\h/\Gamma_n)\big) \subseteq
\im \big([\bwedge^2 (p_n \iota) \mu^*] \colon K \rightarrow H_2(\h/\Gamma_n)\big)\, .
\end{equation}

Consider the map from Definition \ref{eq:holo}, 
$\partial= \partial_A \colon A_2 \rightarrow \L^1 \oplus \L^2$,
where we use the Lie bracket $\beta$ to identify $\bigwedge^2 \L^1$ 
with $\L^2$. Note that the map $p_n\colon \L \to \h/\Gamma_n$ is 
the composition of the canonical Lie projections  $\h \surj \h/\Gamma_n$ 
and $p\colon \L \surj \h$. 
We will repeatedly use the commuting diagrams (\ref{eq:lienat}) 
for $\varphi =p$, in degrees up to $3$. Consider the linear map 
\begin{equation}
\label{eq:delprime}
\partial' := \bwedge^2 (p)\circ \mu^*\colon K \rightarrow \bwedge^2 (\h).
\end{equation}
We first claim that the composition $\partial_2 \circ \partial'$
is the zero map. Indeed, for $y\in K$ we have that 
\begin{equation}
\label{eq:betacirc}
\beta \circ \bwedge^2(p) \mu^*y= p\circ \beta \mu^*y= p \circ \partial y=0.
\end{equation}

\begin{lemma}
\label{lem:partialprime}
If $A$ is a finite $\cdga$ with $A^{>2}=0$, then the induced map 
$[\partial']\colon K \rightarrow H_2(\h)$ is surjective.
\end{lemma}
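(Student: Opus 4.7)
The plan is to reduce surjectivity of $[\partial']$ to Hopf's formula for the Lie algebra $\h=\h(A)=\L/J$, where $\L=\L(A_1)$ and $J$ is the Lie ideal generated by $R:=\im(\partial_A)$. Since $\L$ is free, $H_2(\L)=0$, and the five-term exact sequence associated to $0\to J\to \L\to \h\to 0$ yields a natural isomorphism
\[
\phi\colon (J\cap [\L,\L])\big/[\L,J] \isom H_2(\h),
\]
which can be described concretely by sending $[c]$ to $[\bwedge^2(p)(\tilde c)]$ for any lift $\tilde c\in\bwedge^2\L$ of $c$ under the bracket $\beta_\L$. Well-definedness follows because two such lifts differ by an element of $\ker\beta_\L=\im \partial_3^\L$ (using $H_2(\L)=0$ and functoriality of diagram (\ref{eq:lienat})), and because $\bwedge^2(p)$ annihilates any lift supported in $[\L,J]$.

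Next, I would observe that for $y\in K$ the element $\mu^*(y)=\partial_A(y)$ lies in $R\subseteq J$ and in $\L^2\subseteq [\L,\L]$, and, via the identification $\L^2=\bwedge^2\L^1\subseteq\bwedge^2\L$, is its own lift under $\beta_\L$. Consequently
\[
\phi\bigl([\mu^*(y)]\bigr)=\bigl[\bwedge^2(p)(\mu^*(y))\bigr]=[\partial'(y)],
\]
so that the map $[\partial']\colon K\to H_2(\h)$ is identified, via $\phi$, with $y\mapsto [\mu^*(y)]$ landing in Hopf's quotient. Surjectivity of $[\partial']$ thus reduces to surjectivity of this latter map.

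For the remaining surjectivity statement, I would use the structural identity $J=R+[\L,J]$, valid for any Lie ideal generated by a subspace $R$, together with the natural length grading $\L=\bigoplus_{n\ge 1}\L^n$. Given any $c\in J\cap [\L,\L]$, decompose $c=\partial_A(y)+s$ with $y\in A_2$ and $s\in [\L,J]$. Since $[\L,\L]\subseteq \L^{\ge 2}$, both $c$ and $s$ have vanishing degree-$1$ component; hence so does $\partial_A(y)=d^*(y)+\mu^*(y)$. As $d^*(y)\in\L^1$ and $\mu^*(y)\in\L^2$, this forces $d^*(y)=0$, i.e.\ $y\in K$, and then $c\equiv\mu^*(y)\pmod{[\L,J]}$, so $[c]=[\mu^*(y)]$ in Hopf's quotient.

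The only delicate point is the careful verification of $\phi\circ[\mu^*]=[\partial']$ and the compatibility of the two presentations of $H_2(\h)$; these rest on functoriality of the Chevalley--Eilenberg complex together with the vanishing $H_2(\L)=0$, both standard. The heart of the argument is then the elementary grading observation in $\L$ that forces $d^*(y)=0$, which uses the hypothesis $A^{>2}=0$ only insofar as it keeps $\partial_A$ supported in degrees $\le 2$ of $\L$.
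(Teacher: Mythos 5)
Your proof is correct, and it reaches the conclusion by a cleaner conceptual route than the paper, though the two arguments share the same computational core. The paper works entirely by hand: it lifts a cycle $e\in\bwedge^2\h$ to $f\in\bwedge^2\L$, expands $\beta f\in\rr\cap\L^{\ge 2}$ in the explicit spanning set $\ad_J(\partial y_\lambda)$ of the ideal, extracts the element $y\in K$ from the length-zero terms, and builds an explicit correction $f'\in\L^1\wedge\rr$ so that $f-f'-\mu^*y$ is a cycle mapping to $e-\partial'y$; it then invokes $H_2(\L)=0$ to conclude $[e]=[\partial'y]$. You instead route everything through the Hopf isomorphism $H_2(\h)\cong (J\cap[\L,\L])/[\L,J]$, under which $[\partial']$ becomes $y\mapsto[\mu^*(y)]$, and reduce to the identity $J=R+[\L,J]$ for an ideal generated by a subspace $R$. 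These are the same ingredients in different packaging: the paper's construction of $f'$ is exactly the lift to $\L^1\wedge\rr$ of the $[\L,J]$-part of your decomposition, and its appeal to vanishing higher homology of the free Lie algebra is your well-definedness of the Hopf map on lifts. The decisive step is identical in both: since $[\L,\L]\subseteq\L^{\ge 2}$ while $d^*(y)\in\L^1$, the degree-one component of $\partial_A(y)$ must vanish, forcing $y\in K$ (this is where $A^{>2}=0$ enters, keeping $\im\partial_A\subseteq\L^1\oplus\L^2$). What your version buys is brevity and reusability of a standard lemma the paper already cites in \eqref{eq:hopf}; what the paper's version buys is a self-contained argument that never needs the five-term exact sequence or a verification that the concrete map realizing Hopf's formula is well defined.
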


\begin{proof}
We pick finite bases, $\{ x_i\}_{i\in I}$ for $A_1$ and $\{ y_{\lambda}\}_{\lambda \in \Lambda}$ 
for $A_2$.  By construction, $\h=\L/\rr$, where $\rr$ is the Lie ideal generated by 
$\{\partial y_{\lambda}\}_{\lambda \in \Lambda}$. 
For a length $q$ multi-index $J=(i_1, \dots, i_q)\in I^q$ and an element $r\in \L$, 
we abbreviate $\ad_{x_{i_1}}\circ \dots \circ \ad_{x_{i_q}}(r)\in \L$ by $\ad_J (r)$.
Clearly, $\rr$ is additively generated by the elements 
$\ad_J (\partial y_{\lambda})$, where $J\in I^q$ and $q\ge 0$.

We start with an element $e\in \bwedge^2 \h$ with the property 
that $\partial_2 e=0$, and pick a lift $f\in \bigwedge^2 \L$, via $\bwedge^2(p)$.
Since $\beta e=0$, we have that $\beta f\in \rr \cap \L^{\ge 2}$. Write 
$\beta f= \sum c_{J, \lambda} \ad_J (\partial y_{\lambda})$. 
Since $\beta f\in \L^{\ge 2}$, we must have
$\sum_{\lambda \in \Lambda} c_{\emptyset, \lambda} d^*y_{\lambda}=0$. 
Hence, the element $y= \sum_{\lambda \in \Lambda} c_{\emptyset, \lambda} y_{\lambda}$ 
belongs to $K$.

If $J$ has positive length, we set $\widehat{J}_1:= (i_2, \dots, i_q)$. 
Clearly, the element 
\begin{equation}
\label{eq:f'def}
f':= \sum_{q>0, \lambda\in \Lambda} c_{J, \lambda} x_{i_1}\wedge 
\ad_{\widehat{J}_1} (\partial y_{\lambda})
\end{equation} 
belongs to $\L^1\wedge \rr$, and 
\begin{equation}
\label{eq:betaf'}
\beta f'=  \sum_{\substack{q>0\\ \lambda\in \Lambda}} 
c_{J, \lambda} \ad_J (\partial y_{\lambda})\, . 
\end{equation}

On the other hand, $\beta f= \beta f'+ \beta \mu^* y$, since $y\in K$. 
Hence, $\partial_2 (f-f'-\mu^*y)=0$, and $\bwedge^2(p)(f-f'-\mu^*y)= 
e-\partial' y$, by construction. 

Using the fact that the free Lie algebra $\L$ has vanishing homology 
in degrees greater than $2$ \cite{HS}, we may find an element 
$v \in \bwedge^3 \L$ such that
$f-f'-\mu^*y= \partial_3 v$. We infer that $e-\partial' y= \partial_3 u$, 
where $u=\bwedge^3(p)v$. Hence, $[e]=[\partial'y]$, as asserted.
\end{proof}

\subsection{Proof of Theorem \ref{thm:nat1model}}
\label{subsec:pfclass}
Once again, let $f\colon \wC (\h(A)) \to A$ be the $\cdga$ 
map from (\ref{eq:classmap}). 
We need to show that $H^1(f)$ is an isomorphism and $H^2(f)$ is 
a monomorphism. In both cases, we will use the fact that 
\begin{equation}
\label{eq:hic}
H^i\big(\wC (\h)\big)=\varinjlim_n H^i \left(\cC (\h/\Gamma_n)\right).
\end{equation}

For $i=1$, by Lemma \ref{lem:cstab}(\ref{eq:stab1}) 
and duality, we need to verify that $H_1(f_2)$ is an isomorphism. 
For $n=2$, note that in diagram (\ref{eq:fndual}) 
$\partial_2=0$ and $p_2 \iota$ is surjective. 
Hence, we are left with checking that $H_1(f_2)$ is an injection. 
So, we assume that $p_2 \iota y=0$, i.e., $y\in A_1$ belongs to $\rr$ 
modulo $\Gamma_2$ in the free Lie algebra $\L$. It follows that 
necessarily $y\in \im (d^*)$, and we are done. 

For $i=2$, note that property (\ref{eq:stab2bis}) readily implies 
the injectivity of $H^2(f)$. The discussion from the preceding 
subsection reduces our proof to checking property (\ref{eq:stab2dualfinal}). 
To verify this property, pick an arbitrary element $[e_n]\in \im H_2(q)$.
 By property (\ref{eq:stab2dual}) in strong form,
$[e_n]=H_2(p'_n)[e]$ for some $[e]\in H_2(\h)$, where 
$p'_n\colon \h \surj \h/\Gamma_n$ denotes the canonical 
Lie projection. By Lemma \ref{lem:partialprime},
$[e]\in \im [\partial']$. Putting things together, we 
conclude that $[e_n]$ belongs to the image of the map 
$[\bwedge^2 (p_n \iota) \mu^*] \colon 
K \rightarrow H_2(\h/\Gamma_n)$.  This completes the proof.
\hfill $\Box$

\subsection{Topological interpretation}
\label{subsec:malformula}

As noted in \cite[p.~49]{FHT}, 
a $1$-minimal $\cdga$ $(\bigwedge V, d)$ admits a {\em canonical}\/ 
defining filtration, $\{ W^n \}_{n\ge 1}$, inductively defined by 
$W^1=0$ and 
\begin{equation}
\label{eq:wfilt}
W^{n+1}=d_{| V}^{-1} (\bwedge^2 W^n)\, .
\end{equation}
It is easy to check by induction that the inclusion $V^n \subseteq W^n$  
holds for all $n\ge 1$, and for any defining filtration $\{ V^n \}_{n\ge 1}$ 
on $(\bigwedge V, d)$.

\begin{lemma}
\label{lem:canfilt}
A defining filtration for the $1$-minimal $\cdga$ 
$(\bigwedge V, d)$ is canonical if and only if it 
has the stability properties (\ref{eq:stab1}) and (\ref{eq:stab2}).
\end{lemma}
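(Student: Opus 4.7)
The plan is to prove both implications by directly manipulating the canonical filtration $\{W^n\}$, exploiting the fact that in a $1$-minimal cdga $(\bigwedge V, d)$ there are no exact $1$-forms (since $\bigwedge^0 V = \k$ has zero differential, so $B^1 = 0$). Throughout, I use the inclusion $V^n \subseteq W^n$ recorded just before the lemma and the identification $\bigwedge^1 V^n = V^n$, which holds because $V$ lives in degree $1$.

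For the ``only if'' direction I would check directly that $\{W^n\}$ satisfies (\ref{eq:stab1}) and (\ref{eq:stab2}). For (\ref{eq:stab1}), observe that $Z^1(\bigwedge W^n) = \{v \in W^n : dv = 0\} = W^n \cap W^2 = W^2$ for every $n \geq 2$; since $B^1 = 0$, this gives $H^1(\bigwedge W^n) = W^2$ for all such $n$, and the inclusions $W^n \inj W^m$ induce the identity on $W^2$. For (\ref{eq:stab2}), suppose $\alpha \in Z^2(\bigwedge W^n)$ becomes exact in $\bigwedge W^m$, say $\alpha = d\beta$ with $\beta \in \bigwedge^1 W^m = W^m$. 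Then $d\beta = \alpha \in \bigwedge^2 W^n$, and the very definition $W^{n+1} = d_{|V}^{-1}(\bigwedge^2 W^n)$ forces $\beta \in W^{n+1}$; hence $[\alpha]$ already dies in $H^2(\bigwedge W^{n+1})$.

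For the ``if'' direction, assume $\{V^n\}$ satisfies (\ref{eq:stab1}) and (\ref{eq:stab2}); I would prove $V^n = W^n$ by induction on $n$. The base case $V^1 = W^1 = 0$ is trivial, and only the inclusion $W^{n+1} \subseteq V^{n+1}$ requires work at each step. Take $v \in W^{n+1}$, so $v \in V$ and $dv \in \bigwedge^2 W^n = \bigwedge^2 V^n$ (by the inductive hypothesis). By exhaustiveness $v \in V^m$ for some $m \geq n+1$, so the class $[dv] \in H^2(\bigwedge V^n)$ vanishes in $H^2(\bigwedge V^m)$. Property (\ref{eq:stab2}) forces it to vanish already in $H^2(\bigwedge V^{n+1})$, so $dv = dx$ for some $x \in V^{n+1}$. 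The closed element $v - x \in V^m$ then represents a class in $H^1(\bigwedge V^m)$ which, by (\ref{eq:stab1}) and the fact that $B^1 = 0$, equals $[y]$ for some $y \in V^{n+1}$; hence $v = x + y \in V^{n+1}$, completing the induction.

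There is no real obstacle in the argument; the key insight is that the definition $W^{n+1} = d_{|V}^{-1}(\bigwedge^2 W^n)$ is tailored precisely to (\ref{eq:stab2}), while the vanishing of $1$-coboundaries reduces (\ref{eq:stab1}) to the observation that the closed generators $W^2$ stabilize after the first step of the filtration. The only bookkeeping point is that (\ref{eq:stab1}) must be applied in the range $n+1 \geq 2$, which holds throughout the induction.
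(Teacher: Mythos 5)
Your proof is correct and follows essentially the same route as the paper: the definition $W^{n+1}=d_{|V}^{-1}(\bwedge^2 W^n)$ yields property (\ref{eq:stab2}) for the canonical filtration directly, $H^1$ is computed as the closed part of $V^n$ (using $B^1=0$) to get (\ref{eq:stab1}), and the converse is the same induction, using (\ref{eq:stab2}) to correct by a coboundary from $V^{n+1}$ and (\ref{eq:stab1}) to place the closed remainder in $V^{n+1}$. The only cosmetic difference is that the paper establishes $W^2\subseteq V^2$ once at the base case and reuses it, whereas you invoke (\ref{eq:stab1}) at each inductive step; both are valid.
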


\begin{proof}
First observe that $H^1(\bigwedge V^n)= V^n \cap W^2$, for all $n\ge 1$.

Assume now that $V^n=W^n$, for all $n$. Then clearly 
$V^n \cap W^2= V^m \cap W^2= W^2$ for all $m>n>1$, which proves (\ref{eq:stab1}).
To check (\ref{eq:stab2}), let us start with an element $\alpha \in \bwedge^2 W^n$ 
with the property that $\alpha =dv_m$ with $v_m \in W^m$. Clearly,
$v_m \in W^{n+1}$, and we are done.

Conversely, let us assume that properties (\ref{eq:stab1}) and (\ref{eq:stab2}) hold, 
and let us prove by induction that $W^n \subseteq V^n$, for all $n\ge 2$. By
(\ref{eq:stab1}), $V^n \cap W^2= V^2 \cap W^2$ for all $n\ge 2$, 
which shows that $W^2\subseteq V^2$. For the induction step, start with an
arbitrary element $v_{n+1} \in W^{n+1}$. Then $v_{n+1} \in V^m$ 
for some $m>n$. We infer that $dv_{n+1} \in \bwedge^2 V^n$, by the construction
of the canonical filtration and the inductive hypothesis.
Since the cohomology class $[dv_{n+1}]\in H^2 (\bigwedge V^n)$ 
goes to zero in $H^2 (\bigwedge V^m)$, property (\ref{eq:stab2}) implies that
$dv_{n+1}=du_{n+1}$, for some $u_{n+1}\in V^{n+1}$. Therefore, 
$v_{n+1}- u_{n+1}\in W^2 \subseteq V^{n+1}$. Hence, $v_{n+1}\in V^{n+1}$,
and we are done.
\end{proof}

As an application, we recover, in a self-contained way, a theorem of 
Berceanu, M\u{a}cinic, Papadima, and Popescu \cite[Theorem 3.1]{BMPP};  
in turn, that theorem generalizes a result of Bezrukavnikov 
\cite[Lemma 3.1 and Proposition 4.0]{Bez}, which is neatly 
summarized  in \cite[\S 4.3]{BH}.

\begin{corollary}
\label{cor:malholo}
If $A$ is a $1$-finite $1$-model of $\pi$, then the Malcev Lie algebra 
$\m (\pi)$ is isomorphic to the $\lcs$ completion of the holonomy Lie 
algebra $\h (A)$.
\end{corollary}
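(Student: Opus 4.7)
The plan is to chain Theorem \ref{thm:nat1model} with the duality $\M_1(\pi) = \varinjlim_i \cC(\m(\pi)/F_i)$ recalled at the end of \S\ref{subsec:malcev}, using Lemmas \ref{lem:cstab} and \ref{lem:canfilt} to match up canonical filtrations on the two sides. Since $A$ is a $1$-model for $\pi$, uniqueness of the $1$-minimal model gives $\M_1(A) \cong \M_1(\pi)$, and Theorem \ref{thm:nat1model} identifies $\M_1(A)$ with $\wC(\h(A))$; composing these produces a $\cdga$ isomorphism
\begin{equation*}
\varphi \colon \wC(\h(A)) \isom \M_1(\pi).
\end{equation*}

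Next I would match defining filtrations on the two sides. By Lemma \ref{lem:cstab}, the filtration $V^n := (\h(A)/\Gamma_n)^*$ on $\wC(\h(A))$ satisfies the stability properties (\ref{eq:stab1}) and (\ref{eq:stab2}), so by Lemma \ref{lem:canfilt} it coincides with the intrinsic canonical filtration. An entirely parallel argument --- which goes through because the only fact about $\h(A)$ used in the proof of Lemma \ref{lem:cstab} is that the associated graded of its LCS filtration is generated in degree $1$, a property built into the very definition of a Malcev Lie algebra --- shows that $V'^n := (\m(\pi)/F_n)^*$ is also canonical. Since the canonical filtration is intrinsic to a $1$-minimal $\cdga$, $\varphi$ must carry $V^n$ to $V'^n$ for every $n$.

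Finally I would pass from $\cdga$s back to Lie algebras. Restricting $\varphi$ to the generator subspaces $V^n$ and dualising produces linear isomorphisms $\m(\pi)/F_n \cong \h(A)/\Gamma_n$ at each level; compatibility of $\varphi$ with the Chevalley--Eilenberg differentials, which equal $-\beta^*$ on algebra generators, forces these to be Lie algebra isomorphisms, compatible with the tower projections. Taking inverse limits, together with completeness of the Malcev filtration, yields the desired filtered Lie algebra isomorphism $\m(\pi) \cong \widehat{\h(A)}$. The main obstacle is the Malcev-side analogue of Lemma \ref{lem:cstab}: one must verify that $\{(\m(\pi)/F_n)^*\}$ really does satisfy the stability properties (\ref{eq:stab1}) and (\ref{eq:stab2}), working only from the Malcev axioms (completeness of $F$ and degree-one generation of $\gr \m(\pi)$). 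Once that is settled, the remaining assembly --- intertwining $\varphi$ with filtrations, dualising level by level, and taking inverse limits --- is a routine diagram chase.
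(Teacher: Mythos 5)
Your overall route is the paper's: combine Theorem \ref{thm:nat1model} with uniqueness of the $1$-minimal model to get $\wC(\h(A)) \cong \M_1(\pi)$, use Lemmas \ref{lem:cstab} and \ref{lem:canfilt} to see that the defining filtration $(\h(A)/\Gamma_n)^*$ is the canonical one, and then dualize canonical filtrations and pass to inverse limits. The divergence is exactly at the step you single out as ``the main obstacle'': identifying the dual of the canonical filtration of $\M_1(\pi)$ with the Malcev tower $\{\m(\pi)/F_n\}$. The paper does not prove this; it is invoked as part of Sullivan's duality theorem --- the tower of finite-dimensional nilpotent Lie algebras obtained by dualizing the canonical filtration of $\M_1(\pi)$ has inverse limit $\m(\pi)$ with its filtration --- cited to \cite{Su77} (see also \cite{DP-ccm}, \cite{SW}). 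With that citation in hand, no Malcev-side analogue of Lemma \ref{lem:cstab} is needed, and the corollary follows immediately.

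Since you instead propose to prove that $\{(\m(\pi)/F_n)^*\}$ is canonical by rerunning Lemma \ref{lem:cstab}, a caution: your claim that the only input to that lemma is degree-one generation of the associated graded is not accurate. The proof of stability property (\ref{eq:stab2}) uses the exact identity $\Gamma_{n+1} = \beta(\h \wedge \Gamma_n)$, i.e., the definition of the LCS filtration. For a Malcev filtration, degree-one generation of $\gr(\m)$ only yields $F_{n+1} = [F_1,F_n] + F_{n+2}$; upgrading this to $F_{n+1} = [F_1,F_n]$ (which is what the lifting step in the proof of (\ref{eq:stab2}) actually requires) needs an additional argument using completeness of $F$ and finite-dimensionality of the quotients $\m/F_n$. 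So this step is genuinely open in your write-up, not a ``routine diagram chase,'' and it is precisely the content that the appeal to Sullivan supplies for free. Once that step is either proved or cited, the remaining assembly --- intertwining the isomorphism with the canonical filtrations, dualizing levelwise, and taking inverse limits --- is correct and coincides with the paper's argument.
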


\begin{proof}
We start by recalling Sullivan's duality result \cite{Su77} for 
Malcev Lie algebras and $1$-minimal models of finitely 
generated groups (see also \cite[\S 6]{DP-ccm} 
and \cite[\S 6]{SW}).
By dualizing the canonical filtration of $\M_1(\pi)$, we 
obtain a tower of central extensions of finite-dimensional 
nilpotent Lie algebras, 
\begin{equation}
\label{eq:maltower}
\xymatrixcolsep{18pt}
\xymatrix{\cdots \ar@{->>}[r] & \m_{n+1} \ar@{->>}[r] & \m_n 
\ar@{->>}[r] & \cdots \ar@{->>}[r] & \m_1 =\{0\}}. 
\end{equation}

The Malcev Lie algebra $\m (\pi)$ is isomorphic to the inverse limit of the 
tower (\ref{eq:maltower}), endowed with the inverse limit filtration.
Our assumptions imply that the group $\pi$ and the $\cdga$ 
$A$ have the same $1$-minimal model. By Theorem \ref{thm:nat1model} 
and Lemmas \ref{lem:cstab} and \ref{lem:canfilt},
the above tower of Lie algebras is isomorphic to the tower
\begin{equation}
\label{eq:holtower}
\xymatrixcolsep{18pt}
\xymatrix{\cdots \ar@{->>}[r] & \h (A)/\Gamma_{n+1} \ar@{->>}[r] 
& \h (A)/\Gamma_{n} 
\ar@{->>}[r] & \cdots \ar@{->>}[r] &  \h (A)/\Gamma_{1} =\{0\}}, 
\end{equation}
whose inverse limit is  the $\lcs$ completion of $\h (A)$. 
This completes the proof.
\end{proof}

\section{A complete finiteness obstruction for finitely generated groups}
\label{sec:complobstr}

For $q=1$, the finiteness property from Question \ref{mainpbm} 
depends only on the finitely generated group $\pi=\pi_1(X)$. More 
precisely, it depends only on its $1$-minimal model $\M_1(\pi)$, or 
equivalently, on the Malcev Lie algebra $\m (\pi)$. In this situation, 
we provide a complete answer to Question \ref{mainpbm}, as follows.

\begin{theorem}
\label{thm:complobstr}
A space with finitely generated fundamental group $\pi$ admits a 
$1$-finite $1$-model if and only if the Malcev Lie algebra $\m (\pi)$
is the $\lcs$ completion of a finitely presented Lie algebra.
\end{theorem}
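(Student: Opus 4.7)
The forward direction is essentially contained in §\ref{sec:holmal}. If $\pi$ admits a $1$-finite $1$-model $A$, then Corollary \ref{cor:malholo} gives a filtered Lie algebra isomorphism $\m(\pi)\cong\widehat{\h(A)}$, while the holonomy Lie algebra $\h(A)=\L(A_1)/(\im\partial_A)$ is finitely presented because both $A_1$ and $\im\partial_A$ are finite-dimensional (as noted after Definition \ref{def:holo cdga}). Hence $\m(\pi)$ is the $\lcs$ completion of the finitely presented Lie algebra $\h(A)$.

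For the converse, suppose $\m(\pi)\cong\widehat{L}$ with $L$ a finitely presented Lie algebra. My plan is to construct a $1$-finite $\cdga$ $A$ with $\h(A)\cong L$, and then to deduce that $A$ is a $1$-model of $X$. Given such an $A$, Theorem \ref{thm:nat1model-intro} identifies $\wC(L)=\wC(\h(A))$ as the $1$-minimal model of $A$. On the other hand, the canonical filtration on $\widehat{L}$ satisfies $\widehat{L}/F_n\cong L/\Gamma_n L$, so Sullivan's formula $\M_1(\pi)=\varinjlim_n\cC(\m(\pi)/F_n)$ from §\ref{subsec:malcev} yields $\M_1(\pi)\cong\varinjlim_n\cC(L/\Gamma_n L)=\wC(L)$. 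Thus $\M_1(A)\cong\M_1(\pi)\cong\M_1(X)$ (using the identification $\M_1(X)\cong\M_1(\pi)$ from §\ref{subsec:algmod}), and since $\cdga$s with isomorphic $1$-minimal models share the same $1$-type (see §\ref{subsec:cdga}), $A\simeq_1\Omega(X)$, exhibiting $A$ as a $1$-finite $1$-model of $X$.

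The technical core is the construction of $A$. Starting from a finite presentation $L=\L(x_1,\dots,x_n)/(r_1,\dots,r_m)$, I first replace it by an isomorphic presentation in which every defining relation is ``linear-plus-quadratic'' in the generators. This is achieved by iteratively replacing each nested bracket subexpression $[a,b]$ of positive degree appearing inside some $r_j$ by a fresh generator $y_{a,b}$, and recording the quadratic relation $y_{a,b}-[a',b']$ (where $a',b'$ are the previously introduced names for $a$ and $b$). Each step strictly decreases the bracket depth of the remaining relations, so after finitely many steps I obtain an isomorphic presentation $L\cong\L(z_1,\dots,z_N)/(s_1,\dots,s_k)$ in which every $s_\ell$ lies in $\L(z_1,\dots,z_N)^1\oplus\L(z_1,\dots,z_N)^2$. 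I then define $A$ by setting $A^0=\k$, letting $A^1$ be the dual of $\spn\{z_1,\dots,z_N\}$ and $A^2$ the dual of $\spn\{s_1,\dots,s_k\}$, and taking $A^{\geq 3}=0$; the maps $d\colon A^1\to A^2$ and $\mu\colon A^1\wedge A^1\to A^2$ are the unique ones whose duals assemble into $\partial_A\colon A_2\to A_1\oplus(A_1\wedge A_1)$ carrying each basis vector of $A_2$ (identified with some $s_\ell$) to its $\L^1$- and $\L^2$-components.

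All $\cdga$ axioms for $A$ hold automatically once $A^{\geq 3}=0$: the Leibniz identity, $d^2=0$, and associativity all reduce to identities in degrees $\geq 3$, while graded-commutativity is built into the definition of $\mu$. By construction $\im\partial_A=\spn\{s_1,\dots,s_k\}$, so $\h(A)=\L(A_1)/(\im\partial_A)\cong L$, completing the argument. I expect the main obstacle to lie in the quadratic-reduction step — verifying carefully that the enlarged presentation presents $L$ itself rather than some strictly larger quotient of the enlarged free Lie algebra, and managing signs consistently so that $\partial_A(A_2)$ equals $\spn\{s_1,\dots,s_k\}$.
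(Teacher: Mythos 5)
Your proposal is correct and follows essentially the same route as the paper: the forward direction via Corollary \ref{cor:malholo} and the finite presentability of $\h(A)$, and the converse via reduction of the presentation of $L$ to linear-plus-quadratic form (the paper's Lemma \ref{lem:liepres}), dualization to a finite $\cdga$ $A$ with $A^{>2}=0$ and $\h(A)\cong L$, and identification of $\wC(L)$ with $\M_1(\pi)$ through Sullivan duality. The ``main obstacle'' you flag — checking that the enlarged presentation still presents $L$ — is exactly what the paper handles by exhibiting mutually inverse morphisms between the presentations $\cP_{n-1}$ and $\cP_n$.
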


The rest of this section will be devoted to a proof of this theorem. 
We start with some preparatory notation and a lemma.
Let $\L= \L (x_i)_{i\in I}$ be a finitely generated free Lie algebra. 
For each $k\ge 1$ and each $k$-tuple $J=(i_1, \dots, i_k)\in I^k$, 
set $\abs{J}=k$, and define
\begin{equation}
\label{eq:beta}
\beta(J):=\ad_{x_{i_1}}\circ \dots \circ \ad_{x_{i_{k-1}}}(x_{i_k})\in \L^k.
\end{equation}
An easy induction shows that each graded piece $\L^{k}$ is generated 
by the Lie monomials $\beta(J)$ with $\abs{J}=k$.

\begin{lemma}
\label{lem:liepres}
Let $L=\L/\rr$ be a finitely presented Lie algebra. Then 
$L$ admits an equivalent finite presentation $\cP$, having 
only ``linear plus quadratic" relations.
\end{lemma}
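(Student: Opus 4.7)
The plan is to replace each high-degree defining relation by a collection of degree-2 auxiliary relations, by naming the iterated brackets that occur and rewriting each original relation as a linear combination of the new names.

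First, I write the given finite presentation as $L = \L(x_1,\dots,x_n)/\rr$, with $\rr$ the Lie ideal generated by finitely many elements $r_1,\dots,r_m\in \L$. Since the Lie monomials $\beta(J)$ span $\L$, I expand each relation as a finite sum
\[
r_j \,=\, \sum_{J\in S_j} c_{J,j}\,\beta(J),
\]
where each $S_j$ is a finite set of multi-indices $J\in I^{\ge 1}$ and $c_{J,j}\in\k$. Let $S$ be the union of $S_1,\dots,S_m$ together with every suffix $J^{(\ell)}=(i_\ell,\dots,i_k)$ of each $J=(i_1,\dots,i_k)\in S_j$; this is still a finite collection of multi-indices.

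Next, I enlarge the generating set. For each $J\in S$ with $\abs{J}\ge 2$, I introduce a formal generator $y_J$, while for $\abs{J}=1$ I set $y_{(i)}:=x_i$. I then propose the presentation $\cP$ with generators $\{x_i\}_{i=1}^n\cup\{y_J:J\in S,\;\abs{J}\ge 2\}$ and the following relations, all of which are linear-plus-quadratic in the sense required:
\begin{itemize}
\item for each $J=(i_1,J')\in S$ with $\abs{J}\ge 2$, the relation
$y_J - [x_{i_1},y_{J'}] = 0$ (quadratic when $\abs{J'}\ge 2$, purely quadratic when $\abs{J'}=1$ since $y_{J'}=x_{i_2}$);
\item for each $j=1,\dots,m$, the linear relation $\sum_{J\in S_j}c_{J,j}\,y_J = 0$.
\end{itemize}
This is still a finite presentation.

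Finally, I check that $\cP$ presents the same Lie algebra $L$. Let $\tilde L$ be the Lie algebra defined by $\cP$. The assignment $x_i\mapsto x_i$, $y_J\mapsto \beta(J)$ respects every relation in $\cP$ (the quadratic ones by the recursive definition $\beta(J)=[x_{i_1},\beta(J')]$, and the linear ones because $r_j=0$ in $L$), so it yields a well-defined surjection $\psi\colon \tilde L\surj L$. Conversely, in $\tilde L$ the quadratic relations and induction on $\abs{J}$ force $y_J=\beta_{\tilde L}(J)$, where $\beta_{\tilde L}(J)$ denotes the iterated bracket formed in $\tilde L$ from the $x_i$'s. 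Hence $\tilde L$ is generated by $x_1,\dots,x_n$, and the linear relations then read $\sum_J c_{J,j}\,\beta_{\tilde L}(J)=0$, i.e.\ the image of $r_j$ vanishes in $\tilde L$. This gives an inverse homomorphism $L\to \tilde L$, so $\psi$ is an isomorphism. The only mild point requiring care is the bookkeeping that ensures $S$ is closed under taking suffixes so that every $y_{J'}$ appearing on the right-hand side of a quadratic relation is itself a generator (or an $x_i$); this is exactly why $S$ was defined as the suffix-closure of $\bigcup_j S_j$. With that observation, no nontrivial step remains.
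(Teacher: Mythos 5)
Your proof is correct and follows essentially the same strategy as the paper's: introduce new generators $y_J$ naming the iterated brackets $\beta(J)$, impose the recursive quadratic relations $y_J=[x_{i_1},y_{J'}]$, and rewrite each original relator as a linear combination of the $y_J$. The only differences are bookkeeping ones --- you index the new generators by the suffix-closure of the multi-indices actually occurring, where the paper uses all $J$ with $\abs{J}\le N$ and verifies the equivalence through a chain of presentations $\cP_n\cong\cP_{n-1}$ --- and your direct check that the two homomorphisms are mutually inverse is sound.
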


\begin{proof}
The fact that $L$ is finitely presented allows us to find an integer $N\ge 2$ 
and constants $c^J_{\lambda}$, where $1\le \abs{J}\le N$ and 
$\lambda \in \Lambda$ with $\Lambda$ finite, with the property 
that the Lie ideal $\rr$ is generated by the elements 
$r_{\lambda}:=\sum_{J} c^J_{\lambda}\, \beta(J)$.

For each $n\ge 2$, let us define a finite Lie algebra presentation 
$\cP_n$ as having generators $y_J$ indexed by $1\le \abs{J}\le n$, 
and relators $y_J- [y_{i_1}, y_{\widehat{J}_1}]$,
indexed by $2\le \abs{J}\le n$, where $\widehat{J}_1:= (i_2, \dots, i_k)$. 
Furthermore, let us define a finite Lie algebra presentation $\cP$ 
by adding to the relators from $\cP_N$ the elements 
$\rho_{\lambda}:=\sum_{1\le \abs{J}\le N} c^J_{\lambda}\, y_J$ 
for $\lambda \in \Lambda$.

The Lie presentations $\cP_n$ and  $\cP_{n-1}$ are related by 
the morphisms of presentations $\phi \colon \cP_{n-1} \to \cP_{n}$ 
and $\psi \colon \cP_{n} \to \cP_{n-1}$.
The map $\phi$ sends the generator $y_J$ to $y_J$ for each $J$, 
while $\psi$ sends the generator $y_J$ to $y_J$ for each $J$ with 
$\abs{J}<n$, and maps $y_J$ to $[y_{i_1}, y_{\widehat{J}_1}]$ 
if $\abs{J}=n$. It is straightforward to check that the associated 
Lie algebra morphisms are inverse to each other.

We infer that the Lie algebra morphism $\kappa \colon \L \rightarrow L_N$ 
which sends $x_i$ to $y_i$ is an isomorphism, where $L_N$ denotes the Lie algebra 
associated to $\cP_N$. On the other hand, a straightforward induction shows that 
$\kappa$ sends $\beta(J)$ to $y_J$, for all $J$ with $1\le \abs{J}\le N$.
Finally, for each $\lambda \in \Lambda$ the relator $r_{\lambda}$ is 
identified by $\kappa$ with the element $\rho_{\lambda}$.
This completes the proof.
\end{proof}

{\bf Proof of Theorem \ref{thm:complobstr}.} 
For the forward implication, 
let $\pi=\pi_1(X)$ be a finitely generated group; by the discussion 
from \S\ref{subsec:algmod}, we may assume that $\pi$ 
has a $1$-finite $1$-model $A$. Furthermore, by Lemma \ref{lem:morefin}, 
we may assume that $A$ is finite with $A^{>2}=0$. Denote by 
$\h=\h(A)$ the holonomy Lie algebra of $A$. Clearly, the Lie algebra 
$\h$ is finitely presentable. By Corollary \ref{cor:malholo}, 
the Malcev Lie algebra $\m=\m (\pi)$ is the $\lcs$ completion 
of $\h$, and we are done.

Conversely, suppose that $\m (\pi)$ is the $\lcs$ completion of 
a finitely presented Lie algebra $L$ as above. Note that all relators 
of the presentation $\cP$ from Lemma \ref{lem:liepres} are elements 
of $\L^{\le 2}$, by construction. By dualizing this presentation, 
we obtain a finite $\cdga$ $A$ with $A^{>2}=0$, whose differential 
$d\colon A^1 \to A^2$ is dual to the degree one part of the relator 
map, and with multiplication $\mu\colon A^1 \wedge A^1 \to A^2$ dual 
the degree two part of the relator map.  By construction, the holonomy 
Lie algebra $\h=\h(A)$ is isomorphic to $L$.  Moreover, 
Theorem \ref{thm:nat1model} implies that $\wC (\h) \simeq_1 A$.

On the other hand, $\Omega (X) \simeq_1 \M_1(\pi)$, as noted before. 
It is therefore enough to check that the $\cdga$s $\wC (\h)$ and 
$\M_1(\pi)$ are isomorphic.  Exploiting the fact that both $\cdga$s 
are inductive limits of $\cdga$ towers of Hirsch extensions, 
this may be seen as follows. 
The dual of the tower for  $\wC (\h)$ is by construction the Lie tower 
$\{ \h/\Gamma_{n+1} \rightarrow \h/\Gamma_{n} \}_{n\ge 1}$.  
Furthermore, our assumption that $\m(\pi)\cong \widehat{L}$ 
implies that the dual of the tower for $\M_1(\pi)$ is 
isomorphic to $\{ L/\Gamma_{n+1} \rightarrow L/\Gamma_{n} \}_{n\ge 1}$. 
Hence, $\wC (\h)\cong \M_1(\pi)$, thereby establishing our claim, 
and completing the proof.
\hfill $\Box$

\begin{ack}
We are grateful to Barbu Berceanu for help with Proposition \ref{prop:freemeta}.
This work was initiated at the Centro di Ricerca Matematica 
Ennio De Giorgi in Pisa, Italy, in February 2015. The authors wish to 
thank the organizers of the Intensive Research Period on 
Algebraic Topology, Geometric and 
Combinatorial Group Theory for their warm hospitality, and 
for providing an inspiring mathematical environment. 
\end{ack}

\newcommand{\arxiv}[1]
{\texttt{\href{http://arxiv.org/abs/#1}{arxiv:#1}}}
\newcommand{\arx}[1]
{\texttt{\href{http://arxiv.org/abs/#1}{arXiv:}}
\texttt{\href{http://arxiv.org/abs/#1}{#1}}}
\newcommand{\doi}[1]
{\texttt{\href{http://dx.doi.org/#1}{doi:#1}}}
\renewcommand{\MR}[1]
{\href{http://www.ams.org/mathscinet-getitem?mr=#1}{MR#1}}

\end{document}